\documentclass{amsart}
\usepackage{url}

\usepackage{graphicx}
\usepackage{datetime}
\usepackage{color}
\usepackage{amsmath,amsfonts,amsthm,amssymb}
\usepackage{epstopdf}
\usepackage[all,cmtip]{xy}
\usepackage{accents}
\DeclareGraphicsRule{.tif}{png}{.png}{`convert #1 `dirname #1`/`basename #1 .tif`.png}

\newtheorem{theorem}{Theorem}
\newtheorem{proposition}[theorem]{Proposition}
\newtheorem{definition}[theorem]{Definition}
\newtheorem{corollary}[theorem]{Corollary}
\newtheorem{lemma}[theorem]{Lemma}
\newtheorem{remark}[theorem]{Remark}
\newtheorem*{thm}{Theorem \ref{thm_main}} 
\newtheorem*{thm3}{Theorem \ref{thm_classify}} 
\newtheorem*{thm2}{Theorem \ref{thm_main2}}
\numberwithin{equation}{section} \numberwithin{theorem}{section}

\newcommand{\R}{\mathbb{R}}
\newcommand{\N}{\mathbb{N}}
\newcommand{\Z}{\mathbb{Z}}
\newcommand{\circo}{\accentset{\circ}}
\newcommand{\e}{\varepsilon}

\newcommand{\mbb}{\mathbb}
\newcommand{\C}{\mbb{C}}

\newcommand{\mc}{\mathcal}
\newcommand{\la}{\langle}
\newcommand{\ra}{\rangle}

\def\XXint#1#2#3{{\setbox0=\hbox{$#1{#2#3}{\int}$}
     \vcenter{\hbox{$#2#3$}}\kern-.5\wd0}}

\newcommand{\ov}{\overline}

\DeclareMathOperator{\Res}{Res}
\DeclareMathOperator{\Per}{Per}

\DeclareMathOperator{\dist}{dist}

\DeclareMathOperator{\Area}{Area}

\begin{document}
\title[Branched Willmore spheres]{Branched Willmore spheres}
\author{Tobias Lamm}
\address[T.~Lamm]{Institut f\"ur Mathematik, Goethe-Universit\"at Frankfurt, Robert-Mayer-Str. 10, 60054 Frankfurt, Germany}
\email{lamm@math.uni-frankfurt.de}
\author{Huy The Nguyen}
\address[H.~Nguyen]{Mathematics Institute,
Zeeman Building,
University of Warwick,
Coventry, CV4 7AL, UK}
\email{H.T.Nguyen@warwick.ac.uk}
\thanks{The second author is supported by The Leverhulme Trust.}


\begin{abstract}
In this paper we classify branched Willmore spheres with at most three branch points (including multiplicity), showing that they may be obtained from complete minimal surfaces in $\R ^ 3$ with ends of multiplicity at most three. This extends the classification result of Bryant. We then show that this may be applied to the analysis of singularities of the Willmore flow of non-Willmore spheres with Willmore energy $ \mc {W} ( f ) \leq 16\pi$.
\end{abstract}

\maketitle
\section{Introduction}
Let $ \Sigma $ be a closed immersed surface $ f : \Sigma \rightarrow \R ^ 3 $, then the Willmore functional is defined as 
\begin{align}\label{willmore_functional} 
 \mc { W } (f) =  \int_\Sigma |H|^2 d \mu _g 
\end{align}
where $H=\frac12 (\kappa_1+\kappa_2)$ is the (scalar) mean curvature and $\mu$ is the induced area measure. The variational Euler-Lagrange operator is defined as 
 \begin{align}\label{eqn_EL}
W ( f) = \triangle H +  2H( H^2 - K).
\end{align}
A smooth immersion $f$ is called a Willmore surface if it is a critical point of the Willmore functional and therefore a solution of the equation $ W (f) = 0 $. In the paper \cite{Bryant1984}, Bryant studied smooth Willmore spheres in $\mbb{S}^3 $ and showed that after composing with a stereographic projection such surfaces are M\"obius transforms of complete minimal surfaces with planar ends in $\R^3$. In particular, he showed that Willmore spheres have quantised energy with $ \mc {W} ( f) = 4 \pi k, k \in \mbb N$ where $ k =1$ corresponds to the round sphere and the values $ k = 2, 3 $ are not allowed, since there are no minimal surfaces with planar ends corresponding to these energy bounds. In this paper we will extend Bryant's theorem to classify Willmore spheres in $ \R^ 3 $ with singular points. Note that singular points of a Willmore surface, even if they are graphical, are not removable. The standard example is one half of a catenoid composed with an inversion. Near the origin, the surface is $ C ^ {1, \alpha} $ for all $ \alpha <1$ but it is never $ C^{1,1}$ (see e.g. \cite{Kuwert2004}). Hence Bryant's classification does not extend to cover this case. However, we will be able to prove a classification of branched Willmore spheres. We give a precise definition of a branched Willmore immersion in Definition \ref{branchWillmore} but essentially a branched Willmore immersion is a branched immersion $f:\Sigma \to \R^3$ which smoothly solves the Willmore equation away from an at most finite set of branch points $\{p_1,\ldots,p_l\}$ with finite density $m_i$, $1\le i \le l$.

We will collect together these points as the divisor of the surface and denote it by $D = \sum_{i=1}^{l}m_i p_i$. Furthermore by carefully analysing branch points of Willmore surfaces, using the results of \cite{Kuwert2004} and \cite{kuwert07}, we will prove the following theorem, 
\begin{theorem}\label{thm_main}
Let $f: \mbb{S}^2 \rightarrow \R ^ 3 $ be a branched Willmore immersion with $|D| \leq 3$, then $f$ is either umbilic or the M\"obius transform of a branched complete minimal surface. In particular, this implies that the Willmore energy is quantised as $\mc {W} ( f) = 4 \pi k$.  
\end{theorem}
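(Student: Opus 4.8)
The plan is to follow the engine of Bryant's classification, namely the \emph{holomorphic quartic differential} $\mc{Q}$ attached to a Willmore surface. Recall that to a conformal Willmore immersion one associates a conformally invariant quartic form, expressible through the conformal Gauss map $Y$ (into the space of oriented spheres) roughly as $\mc{Q} = \la \partial_z^2 Y, \partial_z^2 Y \ra\, (dz)^4$; away from the branch points the Willmore equation \eqref{eqn_EL} is precisely the integrability condition making $\mc{Q}$ a holomorphic section of $K^{\otimes 4}$. Thus on $\mbb{S}^2 \setminus \{p_1, \dots, p_l\}$, with its induced conformal structure identifying it with $\mbb{CP}^1$ minus finitely many points, $\mc{Q}$ is a genuine holomorphic quartic differential. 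The whole theorem reduces to showing that $\mc{Q}$ extends meromorphically across the branch points with poles small enough that a degree count forces $\mc{Q} \equiv 0$; the vanishing of $\mc{Q}$ is Bryant's characterisation of surfaces that are M\"obius transforms of minimal surfaces.

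First I would carry out the local analysis at a branch point $p_i$ of multiplicity $m_i$. Using the regularity and asymptotic results of \cite{Kuwert2004} and \cite{kuwert07}, one can put $f$ into a conformal normal form near $p_i$, in which the immersion behaves like $z \mapsto z^{m_i}$ to leading order and the mean curvature and second fundamental form have controlled singular expansions. Feeding these expansions into the local expression for $\mc{Q}$ in a holomorphic coordinate centred at $p_i$, I would show that $\mc{Q}$ has at worst a pole of order $2m_i$ there. This is the technical heart of the argument: the point is to track the precise rate at which $\partial_z^2 Y$ (equivalently the relevant curvature quantities) blow up as $z \to 0$, and to verify that the conformal invariance of $\mc{Q}$ does not produce a worse singularity than naive power counting suggests.

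With the pole bound in hand the degree count is immediate. A nonzero meromorphic section of $K^{\otimes 4}$ on $\mbb{CP}^1$ satisfies
\begin{align*}
\sum_{p} \operatorname{ord}_p(\mc{Q}) = \deg K^{\otimes 4} = 4(2g-2) = -8,
\end{align*}
so its total pole order is at least $8$. Since all poles occur at the branch points and are bounded by $2m_i$, the hypothesis $|D| = \sum_i m_i \leq 3$ gives total pole order at most $2 \cdot 3 = 6 < 8$, a contradiction. Hence $\mc{Q} \equiv 0$, and by Bryant's analysis $f$ is either totally umbilic (the round sphere) or a M\"obius transform of a complete, now branched, minimal surface in $\R^3$; the threshold $|D|\le 3$ is exactly the largest divisor degree for which the strict inequality survives.

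Finally, the quantisation follows from the structure of complete branched minimal surfaces of finite total curvature: the Gauss map extends to a branched conformal map $\mbb{S}^2 \to \mbb{S}^2$ of some degree $k$, and inverting such a surface yields a closed branched Willmore sphere with $\mc{W}(f) = 4\pi k$ via the usual Gauss--Bonnet computation relating the Willmore energy to the total curvature and the multiplicities of the ends. The main obstacle, as indicated above, is the sharp control of $\mc{Q}$ at the branch points; once the pole order $2m_i$ is established, the remainder is the degree count together with the transcription of Bryant's minimal-surface correspondence into the branched setting.
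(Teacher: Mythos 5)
Your proposal follows essentially the same route as the paper: the quartic differential of the conformal Gauss map, a meromorphic extension across the branch points, a degree count on $\mbb{CP}^1$ (your computation $\deg K^{\otimes 4}=-8$ versus total pole order at most $2|D|\le 6$ is exactly the paper's Riemann--Roch argument for the bundle $\kappa^4\otimes 2D$ in Proposition \ref{prop_poles}), Bryant's duality, and quantisation via the inversion formula \eqref{inversion2}. Your pole bound $2m_i$ is weaker than what the paper actually proves (poles of order at most $2$ at every branch point, Theorem \ref{poles}), but it does suffice for the count when $|D|\le 3$.

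The gap sits at the step you yourself call the technical heart, and it is not merely an omitted computation. In local coordinates,
\[
\la Y_{zz},Y_{zz}\ra = \varphi H_{zz} + \tfrac14 \varphi^2 H^2 - H_z(\varphi e^{-\lambda})_z e^{\lambda}
\]
(equation \eqref{eqn_main}), and the Kuwert--Sch\"atzle expansions of Theorem \ref{thm_expansion} do give, by direct power counting, $|\varphi H_{zz}|+|\varphi^2 H^2|+|H_z\varphi\lambda_z|\le C|z|^{-2-2\e}$. But they give no pointwise bound at all on the remaining term $H_z\varphi_z$: an upper bound on $|\varphi|$ cannot be differentiated, and $\varphi$ carries the factor $e^{\lambda}$ whose logarithmic derivative blows up like $|z|^{-1}$, so "feeding the expansions into the local expression" simply does not reach this term. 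Establishing the needed estimate $|\varphi_z|\le C|z|^{m-2-\e}$ (estimate \eqref{claim}) is what occupies most of the paper's proof of Theorem \ref{poles}: the Codazzi equation $\varphi_{\ov z}=e^{\lambda}H_z$ is treated as an inhomogeneous Cauchy--Riemann equation, $\varphi$ is represented by a Cauchy transform, a Caffarelli--Friedman type expansion lemma (Lemma 2.1 of \cite{kuwert07}) handles the case $m\ge 2$, and the case $m=1$ requires a separate argument based on the expansion $H=H_0\log|z|+W^{2,p}_{\loc}$ from \eqref{expansionH}. Some $\ov{\partial}$-based bootstrap of this kind is unavoidable in your step one; without it the pole bound is an assertion, not a proof. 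Two lesser points: Bryant's correspondence must itself be re-proved in the branched setting (constancy of the second null line away from $\mc{S}$, and the argument that the conformal transform satisfies $\hat f\in f(\Sigma)$ so that inversion produces a complete minimal surface; this is Theorem \ref{thm_mobius}), and your quantisation argument covers only the minimal-surface alternative --- for the umbilic alternative the paper shows, via Lemma \ref{lem_odd} and Gauss--Bonnet, that the surface is a round sphere, so that $\mc{W}(f)=4\pi$ there as well.
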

In principle, we can classify all such minimal surfaces by using the Weierstrass-Enneper representation. However, we will content ourselves with the following classification result
  
\begin{theorem}\label{thm_classify}
Let $f: \mbb{S}^2 \rightarrow \R ^ 3 $ be a branched Willmore immersion with $|D| \leq 3$ and $ \mc {W}(f) < 16 \pi$. Then one of the following cases occurs ($p_i\in \mbb{S}^2$, $1\le i \le 3$):
\begin{enumerate}
\item $D = \emptyset$ and $f$ is an isometric immersion of the round sphere with $ \mc{W} (f) = 4 \pi$.

\item $D =p_1+ p_2$ and $ f$ is the M\"obius transform of an embedding of a catenoid with $\mc{W} ( f) = 8\pi$.

\item $D = 3p_1$ and $f$ is the M\"obius transform of an immersion of Enneper's minimal surface with $\mc{W}(f) = 12\pi.$

\item $ D =p_1+ m_2p_2+ m_3p _3 , m _i \in\{0,1\} $ and $ f$ is the M\"obius transform of a trinoid with $ \mc{W}(f) = 12 \pi$.     

\end{enumerate}
\end{theorem}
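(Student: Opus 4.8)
The plan is to read off the classification from Theorem~\ref{thm_main} by enumerating the genus-zero branched minimal surfaces of small energy. By Theorem~\ref{thm_main}, $f$ is either umbilic or the M\"obius transform of a branched complete minimal immersion $\psi:\mbb{S}^2\setminus\{q_1,\dots,q_r\}\to\R^3$ of genus zero, and in either case $\mc{W}(f)=4\pi k$ for some $k\in\N$. Since $\mc{W}(f)<16\pi$ we must have $k\in\{1,2,3\}$. If $f$ is umbilic then, as the round sphere is the only umbilic closed surface and attains $\mc{W}=4\pi$, we are in case (1) with $D=\emptyset$; so from now on assume $f=\Phi\circ\psi$ is a genuine inversion of a minimal surface.

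First I would set up the two bookkeeping identities that organise the argument. Each puncture $q_j$ is an end of $\psi$ with a well-defined density $\mu_j\ge 1$, namely the multiplicity with which the end wraps around the point at infinity; after the inversion the extended map $f$ has density $\mu_j$ at $q_j$. Accounting for the energy end-by-end gives $\mc{W}(f)=4\pi\sum_{j=1}^{r}\mu_j$, so that $k=\sum_j\mu_j$. For the divisor one separates planar from non-planar ends: at a planar end $f$ extends to a \emph{smooth} point and contributes nothing to $D$, whereas a non-planar end of density $\mu_j$ produces a branch point of density $\mu_j$. Hence $|D|=\sum_{q_j\ \text{non-planar}}\mu_j$ and
\begin{equation*}
k=|D|+\#\{\,j : q_j\ \text{is a planar end}\,\},
\end{equation*}
so in particular $|D|\le k\le 3$, consistent with the standing hypothesis.

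Next I would run through the partitions of $k$ into end-densities for $k=2$ and $k=3$, deciding existence by means of the Weierstrass--Enneper representation together with the associated period (closing) conditions, and invoking the result of \cite{Bryant1984} that no genus-zero complete minimal surface has exactly two or three embedded planar ends. For $k=2$ the total density is two: a single end of density two is excluded because the corresponding Weierstrass data cannot be closed up on a genus-zero domain, while two embedded ends force the catenoid (by the classical characterisation of the catenoid as the unique complete two-ended genus-zero minimal surface), both of whose ends are non-planar; this is case (2) with $|D|=2$ and $\mc{W}(f)=8\pi$. For $k=3$ the admissible partitions are $3$, $2+1$ and $1+1+1$. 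The partition $3$ is realised by a single density-three end, namely Enneper's surface, giving case (3) with $D=3p_1$; the mixed partition $2+1$ must be ruled out by the same period analysis; and $1+1+1$ is the trinoid, whose three embedded ends may each independently be planar or non-planar, except that by \cite{Bryant1984} not all three can be planar, producing precisely the divisors $D=p_1+m_2p_2+m_3p_3$ with $m_i\in\{0,1\}$ of case (4). In every surviving case the energy $8\pi$ or $12\pi$ is then immediate from $k=\sum_j\mu_j$.

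The main obstacle is exactly this existence/non-existence analysis via the Weierstrass--Enneper representation. On the one hand one must show that the spurious configurations---the single density-two end when $k=2$, the partition $2+1$ when $k=3$, and more generally any contribution from an interior branch point---cannot satisfy the period conditions for a well-defined minimal immersion on a genus-zero surface; on the other hand one must verify that each surviving configuration is realised, and for the first three cases uniquely, matching the Weierstrass data to the sphere, the catenoid, Enneper's surface and the trinoid. Once these facts are in place the remaining bookkeeping---reading the divisor off the planar/non-planar type of each end and the energy off the total density---is routine.
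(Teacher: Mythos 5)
Your high-level strategy coincides with the paper's (apply Theorem~\ref{thm_main}, note $k\in\{1,2,3\}$, and identify the catenoid, Enneper's surface and the trinoid), but there are two genuine gaps, and the first invalidates the bookkeeping identities on which your enumeration rests. Your formula $k=|D|+\#\{\text{planar ends}\}$ (and the consequent $|D|\le k$) silently assumes that every branch point of $f$ lies over the inversion point $\hat f$. This is false in general, and the paper warns of exactly this after Theorem~\ref{thm_mobius}: a branch point of $f$ whose \emph{image} is not $\hat f$ becomes an \emph{interior} branch point of the complete minimal surface. By \eqref{inversion2}, only the points of $f^{-1}(\hat f)$ (the ends) contribute to the energy, so an interior branch point adds to $|D|$ but not to $k$. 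A concrete configuration your framework cannot see: three embedded planar ends ($k=3$, ends contributing nothing to $D$) together with one interior branch point of multiplicity three, i.e.\ $D=3q$ with $q\notin\hat x$; your identity would assign $|D|=0$ to it. The paper must rule this case out, and does so by a Riemann--Roch count on the Weierstrass data (meromorphic $1$-forms in $H^0(K_\Sigma\otimes[\hat x]\otimes[-2q])$ with vanishing residues, a one-dimensional space, hence no such conformal minimal immersion), and it rules out interior branch points of multiplicity two via Gauss--Bonnet \eqref{gauss-bonnet} combined with White's theorem that $\int K\in 4\pi\Z$. None of this is reproduced or replaced in your proposal; you acknowledge that ``any contribution from an interior branch point'' must be excluded, but your stated identities already presuppose the exclusion. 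A second, related minor error: a planar end extends to a smooth point only when it is embedded; a planar end of density $\mu_j\ge 2$ still produces a branch point of multiplicity $\mu_j$.

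The second gap is that all of the non-existence assertions are deferred to an unspecified ``period analysis.'' The paper's actual arguments are different and concrete: the single density-two end (your $k=2$, partition $2$) and the $2+1$ partition are excluded by Lemma~\ref{lem_odd} --- a parity argument showing a sphere with exactly one branch point cannot have even multiplicity, proved via \eqref{gauss-bonnet} and White's theorem --- not by closing up periods. Likewise, your identification steps lean on Schoen-type uniqueness of the catenoid and on naming the three-ended surfaces ``trinoids,'' whereas the paper derives the total curvature ($8\pi$ resp.\ $16\pi$) from Gauss--Bonnet and then invokes Osserman (Theorem~\ref{thm_osserman}) and L\'opez (Theorem~\ref{thm_lopez}). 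As it stands, the proposal is an outline whose central combinatorial identity is wrong in the presence of interior branch points and whose exclusion steps --- the actual mathematical content of the theorem --- are not carried out.
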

Note that this classification contrasts with Bryant's theorem in \cite{Bryant1984}. In the case of smooth Willmore immersions of the sphere in $\R^3$ the values $ \mc W (f) = 8 \pi, 12 \pi$ are not allowed.  

A trinoid is a complete minimal surface of finite total curvature $\int_{\Sigma} |A|^ 2 = 16 \pi$ which is conformal to $ \mbb{S}^ 2 \backslash \{ p_1, p_2, p_3 \}$. These surfaces were completely described in \cite{L'opez1992}. The hypothesis $ |D| \leq 3$ implies that the surface admits either up to three multiplicity one singularities, one multiplicity two singularity and a multiplicity one singularity or one multiplicity three singularity. The above theorem tells us that neither the second possibility nor one multiplicity one singularity can occur. 
\begin{remark}\label{energy}
If the singularities of the surface occur with the same image point then the hypothesis of Theorem \ref{thm_classify} is equivalent to the energy bound $\mc {W}( f) <16 \pi $. This occurs for branched Willmore surfaces that appear as singularity models for the Willmore flow but in general is not true. For example inversions of minimal herissons, that is minimal surfaces with branch points with $\int K = -4\pi$, are Willmore surfaces with branch points with different image points, see \cite{Rosenberg1988}.
\end{remark}
Apart from the intrinsic interest of a classification as in Theorem \ref{thm_main}, \ref{thm_classify}, Willmore spheres with singularities naturally appear as energy bubbles in the energy identity for sequences of Willmore surfaces, see e.g. \cite{Bernard2011}. 

As an application of the above theorem, we will classify singularities of the Willmore flow of spheres subject to an initial energy bound.

The Willmore flow is given by 
\begin{align*}
 \frac { \partial f } {\partial t } = - W (f), \quad f ( x, 0) = f _0( x), 
\end{align*}
where $f_0:\Sigma \to \R^3$ is some given initial immersion.
In this paper, we will consider the Willmore flow of non-Willmore spheres with Willmore energy  $\mc {W}(f_0) \leq 16\pi$. The following theorem is a convergence result for the Willmore flow.
\begin{theorem}[{\cite[Theorem 5.2]{Kuwert2004}}]
Let $f_0 : \mbb{S}^2 \rightarrow \R ^ 3$ be a smooth immersion with Willmore energy 
\begin{align*}
\mc{W}(f_0) \leq 8\pi.
\end{align*}
Then the Willmore flow with initial data $f_0$ exists smoothly for all times and converges to round sphere.
\end{theorem}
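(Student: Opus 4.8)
\section*{Proof proposal for the convergence theorem}

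The statement is a long-time existence and convergence result for the fourth-order Willmore flow, and the natural route is the concentration-compactness scheme for geometric evolution equations adapted to the fourth-order scaling. The plan is to proceed in four stages: short-time existence together with energy monotonicity, an $\varepsilon$-regularity estimate that reduces longevity to the absence of curvature concentration, a blow-up argument that rules out finite-time singularities under the energy bound, and finally a convergence argument. To begin, in a suitable gauge the flow $\partial_t f = -W(f)$ is a quasilinear fourth-order parabolic system, so smooth short-time solutions exist and are unique for any smooth initial immersion $f_0$ by standard parabolic theory. The fundamental monotonicity is the computation
\begin{align*}
\frac{d}{dt}\mc{W}(f_t) = -\int_{\mbb{S}^2} |W(f_t)|^2\, d\mu_t \le 0,
\end{align*}
which shows that $\mc{W}(f_t)\le \mc{W}(f_0)\le 8\pi$ on the whole interval of existence and that $\int_0^\infty\!\int_{\mbb{S}^2}|W(f_t)|^2\,d\mu_t\,dt<\infty$.

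The second stage is a local curvature estimate of $\varepsilon$-regularity type: there is $\varepsilon_0>0$ so that if the concentrated curvature $\int_{f^{-1}(B_\rho(x))}|A|^2\,d\mu$ stays below $\varepsilon_0$ on a ball over a time interval, then all covariant derivatives of $A$ are bounded on a smaller ball at a definitely later time, with constants depending only on $\varepsilon_0$, $\rho$ and the elapsed time. This is the fourth-order analogue of the small-energy regularity theory and reduces the longevity question to excluding concentration of curvature in arbitrarily small balls. Consequently, if the maximal existence time $T$ were finite, there would be a point $x_0$ and times $t_j\uparrow T$ at which $\int_{f^{-1}(B_\rho(x_0))}|A|^2\,d\mu\ge\varepsilon_0$ for every $\rho>0$.

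The heart of the argument, and the step I expect to be the main obstacle, is the blow-up analysis at such a putative singular time. Performing a parabolic rescaling adapted to the fourth-order scaling $x\mapsto\lambda x$, $t\mapsto\lambda^4 t$ about $(x_0,T)$ and passing to the limit using the uniform local estimates, I would extract a nontrivial complete, properly immersed Willmore surface in $\R^3$ carrying a definite quantum of concentrated energy. The energy bound $\mc{W}\le 8\pi$, combined with the Li--Yau inequality and Simon's monotonicity formula, should then force this limit to be flat, contradicting its nontriviality; the delicate points are producing a genuinely nontrivial limit with uniform higher-order control and quantifying exactly how much energy must concentrate, so that remaining strictly below $8\pi$ yields the contradiction. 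This rules out finite-time singularities and gives $T=\infty$.

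Finally, for convergence as $t\to\infty$, the finiteness of $\int_0^\infty\!\int_{\mbb{S}^2}|W(f_t)|^2\,d\mu_t\,dt$ yields a sequence of times along which $W(f_{t_j})\to 0$, and the $\varepsilon$-regularity estimates provide subconvergence, after translating to keep the surfaces from escaping to infinity, which the $8\pi$ bound together with the monotonicity formula prevents, to a smooth Willmore sphere of energy $\mc{W}\le 8\pi$. By Bryant's classification no smooth Willmore sphere carries energy strictly between $4\pi$ and $16\pi$, so the limit is the round sphere. A Lojasiewicz--Simon gradient inequality for $\mc{W}$ near the round sphere then upgrades this subsequential convergence to full smooth convergence of the flow to a round sphere, completing the argument.
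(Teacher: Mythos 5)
First, a contextual remark: the paper does not prove this statement at all; it is quoted verbatim from Kuwert--Sch\"atzle \cite[Theorem 5.2]{Kuwert2004}, and the closest in-paper analogue of its proof is the argument for Theorem \ref{thm_main2} in Section 4. Measured against that, your stages 1, 2 and 4 (gradient-flow structure and energy monotonicity, $\e$-regularity/lifespan theory, and the endgame via Bryant plus a \L{}ojasiewicz--Simon argument) are consistent with the standard framework.

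The genuine gap is stage 3, and the mechanism you propose there would fail, not merely needs elaboration. The Li--Yau inequality and Simon's monotonicity formula can never force the blow-up limit to be flat: the catenoid and Enneper's surface are complete, properly immersed, nontrivial Willmore surfaces with $\mc{W}=0$, so any argument relying only on lower bounds for the Willmore energy in terms of densities is powerless against them. Note also that the quantity which concentrates at a singularity is $\int |A|^2\,d\mu$, not $\mc{W}$, so the blow-up may well carry zero Willmore energy; this is exactly why $8\pi$ is the sharp threshold, and why at the levels $12\pi$ and $16\pi$ these minimal surfaces genuinely occur as blow-ups (Theorem \ref{thm_main2}). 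The argument that actually closes this step is an inversion--removability--duality mechanism absent from your proposal. One inverts the noncompact blow-up $\hat{\Sigma}_0$ at a point $x_0\notin \hat{\Sigma}_0$: by Lemma \ref{thm_convergence}, $\ov{\Sigma}\cup\{x_0\}$ is a compact, genus-zero Willmore surface, smooth away from the single singular point $x_0$, with
\begin{align*}
\mc{W}(\ov{\Sigma})=\mc{W}(\hat{\Sigma}_0)+4\pi\,\theta^2(\mu,\infty)\le \liminf_{j\to\infty}\mc{W}(\Sigma_j).
\end{align*}
To force unit density $\theta^2(\mu,\infty)=1$ one needs the right-hand side to be \emph{strictly} below $8\pi$; otherwise $\theta^2(\mu,\infty)=2$ with $\mc{W}(\hat{\Sigma}_0)=0$, i.e.\ a catenoid blow-up, is perfectly consistent with all your estimates. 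The strictness comes from the strict energy decrease for non-Willmore initial data, while Willmore initial data with $\mc{W}(f_0)\le 8\pi<16\pi$ is already a round sphere by Bryant and the flow is static; your proposal never isolates this case distinction. Even with unit density one cannot simply quote Bryant's classification, because the point singularity need not be smoothly removable (the inverted half-catenoid is $C^{1,\alpha}$ but not $C^{1,1}$, as recalled in the introduction). The decisive ingredients are the Kuwert--Sch\"atzle asymptotic/removability theory (Theorem \ref{thm_expansion}) and the extension of Bryant's duality across the singular point: the quartic differential $\la Y_{zz},Y_{zz}\ra\, dz^4$ is holomorphic away from $x_0$ and has a pole of order at most two there (Theorem \ref{poles} with $m=1$), hence vanishes identically on $\mbb{S}^2$ (Proposition \ref{prop_poles} with $|D|=1$), so by Theorem \ref{thm_mobius} and the quantization $\mc{W}=4\pi k<8\pi$ the inverted surface is a round sphere and $\hat{\Sigma}_0$ is a plane --- the desired contradiction. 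Without this mechanism the blow-up stage cannot be completed; the same omission affects stage 4, since for $T=\infty$ the translation limit may be noncompact and requires the identical treatment rather than the compactness you assume.
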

\begin{remark}
The conclusion remains true for a smooth immersion $f_0 : \mbb{S}^2 \rightarrow \R^4$ with $\mc{W}(f_0) < 8\pi$ by combining the results of Kuwert and Sch\"atzle \cite{Kuwert2004} with the point removability result of Rivi\`ere \cite{Rivi`ere2008}.
\end{remark}
In the case of initial data with larger Willmore energy we do not expect smooth convergence to a round sphere since singularities do form \cite{Mayer2002}, \cite{Blatt2009}. It was shown by Kuwert and Sch\"atzle \cite{Kuwert2001}, \cite{Kuwert2004} (see also \cite{Kuwert2011}) that singularities at the first singular time for the Willmore flow are modelled by properly immersed, complete Willmore surfaces with finite total curvature. More precisely they showed that if there is a singularity of the flow then one can obtain by blowup, blowdown or translation a sequence of solutions to the Willmore flow that converges locally smoothly to a properly immersed, complete Willmore surface. After obtaining a complete Willmore surface, one can apply an inversion and then such a surface is a closed Willmore surface with possible branch points which are all mapped to one single point. The key to analysing singularities of the Willmore flow then is the classification result for branched Willmore spheres described in Theorem \ref{thm_classify}. The theorem for singularities of the Willmore flow that we will prove is 
\begin{theorem}\label{thm_main2}
Let $f_0: \mbb{S}^2 \rightarrow \R ^ 3 $ be a smooth immersion of a non-Willmore sphere with Willmore energy
\begin{align*} 
\mc{W} ( f_0) \leq 16 \pi.
\end{align*} 
If the maximal Willmore flow $f:\mbb{S}^2\times [0,T)\to \R^3$ with initial value $f_0$ does not converge to a round sphere then there exist sequences $ r _j, t _ j\nearrow T $ where
$ r_j \rightarrow \infty, r_j \rightarrow 0  $ or $ r_j \to 1 $ and a rescaled flow 
  \begin{align*}
 f _ j : \Sigma \times \left[ -\frac{ t_j}{ r_j^4} , \frac{ T - t_j} { r _j ^ 4 } \right) \rightarrow \R^3, f_j( p, \tau) = \frac{ 1}{ r_j} ( f ( p, t_j+ r_j^4 \tau) - x_j) 
\end{align*}
such that $f_j$ converges locally smoothly to either a catenoid, Enneper's minimal surface or a trinoid. In particular, if $\mc{W}(f_0) \leq 12\pi$ then either the maximal Willmore flow converges to a round sphere or $f_j$ converges locally smoothly to a catenoid.  
\end{theorem}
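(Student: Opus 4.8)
The plan is to combine the blow-up analysis of Kuwert and Sch\"atzle for the Willmore flow with the classification of branched Willmore spheres in Theorem \ref{thm_classify}. First I would reduce the hypothesis to the formation of a singularity. Along the flow the energy $t\mapsto \mc{W}(f(\cdot,t))$ is non-increasing, and since $f_0$ is not Willmore it is \emph{strictly} decreasing at $t=0$; hence $\mc{W}(f(\cdot,t))<16\pi$ for $t>0$. If the flow subconverged smoothly (without concentration of curvature) as $t\nearrow T$, the limit would be a smooth Willmore sphere of energy $<16\pi$, which by Bryant \cite{Bryant1984} (the values $8\pi,12\pi$ being excluded in the smooth case) must be the round sphere. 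Consequently, the assumption that the maximal flow does not converge to a round sphere forces a concentration of curvature along some sequence $t_j\nearrow T$, i.e. a singularity in the sense of \cite{Kuwert2001,Kuwert2004}.

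Next I would invoke the concentration/blow-up procedure of Kuwert and Sch\"atzle \cite{Kuwert2001,Kuwert2004,Kuwert2011}. At the singularity one chooses concentration points $x_j$, times $t_j\nearrow T$ and scales $r_j$, and passes to the parabolically rescaled flows $f_j(p,\tau)=\frac1{r_j}\bigl(f(p,t_j+r_j^4\tau)-x_j\bigr)$; according to whether the blow-down, blow-up or translation regime is needed one has $r_j\to\infty$, $r_j\to0$ or $r_j\to1$. Their analysis yields local smooth subconvergence of $f_j$ to a properly immersed, complete, non-compact Willmore surface $\hat f$ of finite total curvature, and by scale invariance of the Willmore energy together with lower semicontinuity one gets $\int\hat H^2\le \lim_{t\nearrow T}\mc{W}(f(\cdot,t))<16\pi$.

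Now I would pass from the complete surface $\hat f$ to a closed branched sphere. Because $\hat f$ has finite total curvature its ends are asymptotic to (possibly multiple) planes, so applying an inversion $I$ and the removability analysis of \cite{Kuwert2004,kuwert07} carried out earlier in this paper, $\bar f=I\circ\hat f$ extends to a branched Willmore immersion of $\mbb{S}^2$ whose branch points all lie over $I(\infty)$. Finiteness of the total curvature bounds the number and multiplicity of the ends, giving $|D|\le3$, while the concentrated energy is controlled by the flow energy, $\mc{W}(\bar f)\le\lim_{t\nearrow T}\mc{W}(f(\cdot,t))<16\pi$. Theorem \ref{thm_classify} then applies and identifies $\bar f$ as a round sphere or as the M\"obius transform of an inverted catenoid, Enneper surface or trinoid. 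The round sphere is excluded since $\hat f$ is non-compact and carries concentrated curvature, so $\hat f$ is, up to a M\"obius transformation and rescaling, a catenoid, Enneper's surface or a trinoid; this is exactly the asserted limit of $f_j$.

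Finally, for the refinement under $\mc{W}(f_0)\le12\pi$ I would rule out the Enneper and trinoid bubbles on energy grounds, and this is the step I expect to be the main obstacle. By Theorem \ref{thm_main} the energy of $\bar f$ is quantised, so the concentrated energy is $\mc{W}(\bar f)\in\{8\pi,12\pi\}$ (the catenoid giving $8\pi$, the Enneper surface and the trinoid giving $12\pi$). The delicate point is to show that a definite amount of energy must remain on the complementary, non-degenerating part of the sphere: using the energy identity for degenerating Willmore spheres \cite{Bernard2011} together with the Li--Yau inequality, the weak limit is a non-trivial closed branched immersion of $\mbb{S}^2$ and therefore retains at least $4\pi$, giving $\mc{W}(f_0)\ge\mc{W}(\bar f)+4\pi$. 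An Enneper or trinoid bubble would then force $\mc{W}(f_0)\ge16\pi$, contradicting $\mc{W}(f_0)\le12\pi$; hence only the catenoid can occur, and if $\mc{W}(f_0)<12\pi$ no singularity forms at all and the flow converges to a round sphere (recovering the $8\pi$ threshold of \cite[Theorem 5.2]{Kuwert2004}). Establishing the sharp no-neck-energy statement and the precise $4\pi$ residual is where the technical weight of the argument lies.
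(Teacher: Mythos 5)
Your first three paragraphs follow the paper's own route (strict energy drop since $f_0$ is not Willmore, the Kuwert--Sch\"atzle blow-up/blow-down/translation procedure, inversion of the complete limit to a closed branched Willmore sphere, then Theorem \ref{thm_classify}), but with three details glossed over that the paper treats explicitly: the fact that the inverted limit is a \emph{sphere} is not automatic, it comes from the genus bound in Lemma \ref{thm_convergence}, $g(\ov\Sigma)\le\liminf_j g(\Sigma_j)=0$; the bound $|D|\le 3$ does not follow from finite total curvature, as you assert, but from the energy bound $\mc{W}(\ov\Sigma)<16\pi$ together with the fact that all branch points lie over the single point $x_0$ (Remark \ref{energy}, via the inversion formula \eqref{inversion2}); and to conclude that the blow-up limit itself is a catenoid, Enneper surface or trinoid (rather than one of these only ``up to a M\"obius transformation'', which would not even be minimal) one must identify the inversion point $x_0$ with the conformal transform of $\ov f$, which the paper extracts from the classification: in every surviving case the divisor maps onto the conformal transform point, and here the divisor maps to $x_0$.

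The genuine gap is your final step. The inequality $\mc{W}(f_0)\ge\mc{W}(\ov f)+4\pi$ that you propose to extract from an energy identity plus Li--Yau is both unnecessary and false: the $4\pi\times(\text{multiplicity})$ contribution of the ends/branch points is \emph{already contained} in $\mc{W}(\ov f)$ through the inversion formula --- indeed $\mc{W}(\ov f)=\mc{W}(\hat f)+4\pi\theta^2(\mu,\infty)$ by \eqref{eqn_energy_bound}, and for a catenoid blow-up $\mc{W}(\hat f)=0$, so $\mc{W}(\ov f)=8\pi$ --- and there is no additional $4\pi$ residual to be found; smooth spheres of energy $8\pi+\e$ can develop catenoid singularities (cf. Blatt's example), so your corollary that no singularity forms below $12\pi$ cannot be correct. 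The paper's argument for the ``in particular'' clause is a one-liner that needs no bubbling analysis: if $\mc{W}(f_0)\le 12\pi$ and $f_0$ is not Willmore, the energy drops strictly, so $\mc{W}(\ov f)\le\liminf_j \mc{W}(\Sigma_j)<12\pi$ by Lemma \ref{thm_convergence}; since Theorem \ref{thm_classify} forces $\mc{W}(\ov f)\in\{8\pi,12\pi\}$ when the divisor is non-empty, and Enneper's surface and the trinoid both require exactly $12\pi$, only the catenoid survives. Thus the step you flagged as carrying ``the technical weight of the argument'' disappears entirely once the energy accounting is done on the inverted closed surface rather than by splitting the flow surface into a body and a bubble.
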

\begin{remark}
A similar result for rotationally symmetric initial immersions satisfying $\mc W (f_0)\leq 12\pi$ has been previously obtained by Blatt \cite{Blatt2009}.  
\end{remark}

The paper is set out as follows. In section \ref{sec_preliminaries}, we gather together facts and notations required for the remainder of the paper.  In section \ref{sec_willmore_spheres}, we classify branched Willmore surfaces with $ |D| \leq 3$. They key to this theorem is to show that a certain meromorphic four form which has poles on the divisor $D$ has at worst poles of second order. Once this is proven, the classification theorem follows from suitable modifications and improvements of results of Bryant \cite{Bryant1984} and Eschenburg \cite{Eschenburg}. 
In section \ref{sec_analysis} we consider the Willmore flow.  We use the singularity analysis of Kuwert and Sch\"atzle \cite{Kuwert2011} in order to show that if the flow does not converge to a round sphere, then in general we can obtain by blowup a Willmore surface with finitely many singular points. We then consider the Willmore flow of spheres with Willmore energy $ \mc { W} ( f_0) \leq 16 \pi$ where $ f_0$ is not a Willmore sphere. From the assumption we conclude that the Willmore energy must drop immediately. We will then prove that our hypotheses show that the singularity of the Willmore flow is modelled by a branched Willmore sphere with $|D|\leq 3$ which allows us to apply the classification theorem proved above.  

\section{Preliminaries}\label{sec_preliminaries}
In the following, we will require various formulae for geometric quantities under M\"obius transforms, in particular inversions. Such formulae hold for $W^{2,2}$ conformal immersions, which are defined as follows. 
\begin{definition}[{\cite{Kuwert2010}}]
Let $ \Sigma$ be a Riemann surface. A map $ f \in W^{2,2}_{loc}( \Sigma, \R^n)$ is called a conformal immersion if in some local coordinates $ (U, z )$, the metric $ g_{i j} = \langle \partial _i f , \partial_j f \rangle $ is given by
\begin{align*}
g_{ij} = e^{2u} \delta _{ij}, \quad u \in L^\infty_{loc} (U).
\end{align*} 
The set of all $ W^{2,2}$-conformal immersions of $\Sigma$ is denoted $W^{2,2}_{conf} ( \Sigma, \R^n)$.
\end{definition}
 For $W^{2,2}$ conformal immersions we have the weak Liouville equation, 
\begin{align*}
\int_{U} \langle Du, D\varphi \rangle = \int_{U} K_g e ^ {2u} \varphi \quad \forall \varphi \in C^\infty_0(U).
\end{align*}
This is shown to hold in \cite{Kuwert2010}. 

\begin{definition}[Branched conformal immersion]  \label{defn_branched}
 A map $ f: \Sigma \rightarrow \R^{n}$ is called a branched conformal immersion  (with locally square integrable second fundamental form) if $ f \in W ^ {2,2}_ {conf} ( \Sigma \backslash \mc{S}, \R^n)$ for some discrete $ \mc S \subset \Sigma$ and if for each $ p \in \mc {S}$ there exists a neighbourhood $ \Omega_p $ such that in local conformal co-ordinates
 \begin{align*}
\int_{\Omega_p\backslash \{p\} } |A|^2 d\mu_g < \infty.   
\end{align*}
Moreover, we either require that $ \mu_{g} (\Omega_p \backslash \{p\} )< \infty $ or that $ p $ is a complete end. 
\end{definition}  
It was proved in Kuwert-Li \cite{Kuwert2010} and M\"uller-\v{S}ver\'ak \cite{Muller1995} that about each point in the set $\mc{S}$ we can construct local conformal coordinates. In terms of these coordinates the assumption that $ \mu_g ( \Omega_p \backslash \{ p\})<\infty $ or that $ p$ is a complete end is equivalent to assuming that either $\int_{\Omega_p \backslash \{ p\} }e ^{\pm2u} d x < \infty$ and we also note that the set of branched conformal immersions is closed under inversions. In particular, in the paper \cite{Nguyen2011}, it is shown that a suitable inversion sends complete ends to finite area branch points. In the following we therefore only consider finite area branch points. 

Now we consider a branched conformal immersion $f:\Sigma \to \R^3$ and since we are only interested in the local behaviour of the immersion we assume without loss of generality that $\mc{S}=\{p\}$. Furthermore we assume that $f:\Sigma \backslash \{p\}\to B^3_1(0) \backslash \{0\}$ with $0\in \text{spt} \mu$, where
\[
 \mu:=f(\mu_g)=\Big(x\mapsto \mathcal{H}^0(f^{-1}(x))\Big) \mathcal{H}^2 \lfloor f(\Sigma)
\]
is the weight measure of the integral $2$-varifold associated with $f$ (see e.g. \cite{Kuwert2004}).

Next we choose a punctured neighbourhood of $p$ which is conformally equivalent to $D_1\backslash \{0\}$ where $D_1=\{ z\in \R^2 |\,\ |z|<1\}$. 
Altogether this shows that $f$ is a $W^{2,2}$-conformal immersion from $D_1\backslash \{0\}$ to $B^3_1(0)\backslash \{0\}$ with $\int_{D_1\backslash \{0\} } |A|^2 d\mu+\mu_g(D_1\backslash \{0\})<\infty$ and hence a result of Kuwert-Li \cite{Kuwert2010} (see also Lemma A.4 in \cite{Rivi`ere2010}) shows the existence of a number $m\in \N$ such that $\theta^2 (\mu,0) =m$. Moreover $f$ is smooth away from the origin.

Finally we note that Kuwert-Sch\"atzle \cite{kuwert07} showed that in the present situation the conformal parametrization $f$ and the conformal factor $u$ satisfy 
\begin{align*}
 |f(z)|\simeq |z|^{m} \ \ \ \text{and}\ \ \ |\nabla f(z)|\simeq e^{u(z)} \simeq |z|^{m-1},
\end{align*}
for all $z\in D_1 \backslash \{0\}$. Moreover, by a result of Kuwert-Li \cite{Kuwert2010} we have for all $z\in D_1\backslash \{0\}$
\begin{align}
 u(z)=(m-1)\log |z| +v(z)+h(z), \label{expansionu}
\end{align}
where $h:D_1\to \R$ is a smooth harmonic function and $v\in C^0\cap W^{1,2} \cap W^{2,1} (D_1)$ is a solution of the equation
\begin{align}
 -\Delta v=K_g e^{2u}. \label{pseudoliouville}
\end{align}

Altogether we can apply the following result of Kuwert-Sch\"atzle (which is a combination of Theorem 3.5 in \cite{Kuwert2001} with Lemma 4.1 in \cite{Kuwert2004}, and Theorem 1.1 in \cite{kuwert07}). 
\begin{theorem}\label{thm_expansion}
Let $ \Sigma$ be an open surface and $ f : \Sigma \rightarrow B^3_\delta(0)\backslash \{0\} \subset \R ^ 3 $ be a smooth Willmore immersion that satisfies for some $m\in \N$
\begin{align*}
&0 \in \text{spt} \mu, \\
&\theta^2 (\mu,0) =m\ \ \ \text{and}\\ 
&\int_{ \Sigma} |A|^2 d\mu < \infty.
\end{align*}

Then the tangent cone of $ \mu$ at $0\in \R^3$ is unique and is a finite union of planes, and $\mu$ is given by multivalued graphs over these planes in a neighbourhood of the origin. Moreover, there exists a conformal parametrization of $\Sigma$ which we again denote by $f:D_1\backslash \{0\} \to \R^3 \backslash \{0\}$, such that for all $z\in D_1 \backslash \{0\}$ and all $\e>0$ there exists $C_{\e}>0$ so that the second fundamental form and its derivatives satisfy an estimate of the form
\begin{align*}
 |\nabla^k A( z) |  \leq C _{\e} | z| ^{1-k-m-\e} \quad \forall \e > 0. 
\end{align*}
Moreover, in the case $m=1$, $\mu$ is a $W^{2,p}\cap C^{1,\alpha}$-embedded unit density surface at $0$ for all $1\le p<\infty$ and all $0<\alpha<1$ and for the scalar mean curvature we have the expansion
\begin{align}
H(z)=H_0 \log|z|+W^{2,p}_{\text{loc}}, \label{expansionH}                                                                                                                                                                                                               
\end{align}
where $H_0\in \R$.

Furthermore, locally around the origin, the immersion has the following representation,
\begin{align}\label{eqn_loc_expansion}
f(z) &= \Re( a z^{m}) + O ( |z|^ {m + \gamma} )      \\
\partial _ 1 f ( z) - \imath \partial_ 2 f ( z ) &=  m  a z ^{ m -1} + O ( | z| ^ {m -1+ \gamma} )   
\end{align}
where $ \gamma > 0$ and $ a \in \mbb{C} \backslash \{0\}$. 
\end{theorem}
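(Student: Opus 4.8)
The plan is to assemble the three cited ingredients and to verify that their hypotheses are compatible with the conformal setup fixed above. The integrality $\theta^2(\mu,0)=m$ is already in hand from the preliminary discussion, so what remains for the first conclusion is the structure of the tangent cone. I would run the blow-up: the rescalings $\mu_r$ given by $\mu_r(E)=r^{-2}\mu(rE)$ have uniformly bounded total curvature by the scale-invariance of $\int|A|^2$, so along any $r_j\to 0$ they subconverge (as integral varifolds) to a stationary, finite-curvature Willmore cone, which is necessarily a finite union of planes through the origin. The \emph{uniqueness} of this limit is the delicate input and is exactly Theorem 3.5 of \cite{Kuwert2001}; the density $m$ then forces $\mu$ to be an $m$-valued graph over these planes near $0$.

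Next I would establish the curvature decay in the conformal parametrization $f:D_1\setminus\{0\}\to\R^3$ constructed above, in which $e^u\simeq|z|^{m-1}$ and $u$ satisfies \eqref{expansionu}. The estimate $|\nabla^k A(z)|\le C_\e|z|^{1-k-m-\e}$ is the content of Theorem 1.1 in \cite{kuwert07}: an $\e$-regularity argument on the dyadic annuli $\{|z|\sim 2^{-j}\}$, fed by the finiteness of $\int|A|^2$ and the known scaling of $e^u$, upgrades integral control of $A$ to the stated pointwise decay, the loss of $\e$ reflecting the borderline nature of the rescaled interior estimates. At this step one only has to match the normalisations of \cite{kuwert07} to ours.

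For $m=1$ and the local representation I would argue as follows. When $m=1$ the leading logarithm in \eqref{expansionu} drops out, so $u$ is bounded and $f$ extends to a unit-density conformal immersion across the puncture; Lemma 4.1 of \cite{Kuwert2004} then gives $W^{2,p}\cap C^{1,\alpha}$ regularity for all $p<\infty$ and $\alpha<1$, while the expansion \eqref{expansionH} is forced by the fundamental-solution contribution of the point source in $\Delta H+2H(H^2-K)=0$. The representation \eqref{eqn_loc_expansion} I would then obtain by integration: the complex gradient $\partial_1 f-\imath\partial_2 f$ has modulus $\simeq|z|^{m-1}$ and is holomorphic up to corrections controlled by $A$, so its leading term is $m a z^{m-1}$ with $a\in\C\setminus\{0\}$, the remainder $O(|z|^{m-1+\gamma})$ arising from integrating the decay of $A$ on annuli; one further integration in $z$ yields $f(z)=\Re(a z^{m})+O(|z|^{m+\gamma})$.

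The main obstacle is the uniqueness of the tangent cone, since this is what produces a \emph{single} leading coefficient $a$ --- and hence a genuine asymptotic expansion rather than merely subsequential blow-up limits. This requires the full Kuwert--Sch\"atzle singularity analysis, i.e.\ a \L{}ojasiewicz--Simon type decay of the rescalings rather than soft varifold compactness. A secondary technical point is converting the $\e$-loss in the curvature estimate into a strictly positive gain $\gamma>0$ in the remainder $O(|z|^{m+\gamma})$, which is needed to separate the leading term $\Re(a z^{m})$ cleanly from the error.
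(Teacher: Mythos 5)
Your proposal is correct and takes essentially the same route as the paper: the paper gives no independent proof of this theorem, but states it precisely as a combination of Theorem 3.5 in \cite{Kuwert2001}, Lemma 4.1 in \cite{Kuwert2004}, and Theorem 1.1 in \cite{kuwert07}, which is exactly the assembly you describe (tangent-cone uniqueness, the $m=1$ regularity with the expansion of $H$, and the pointwise curvature decay together with the local representation). Your assignment of roles to the three cited results is consistent with the paper's attribution, so nothing further is needed.
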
 

We remark that in higher codimensions the corresponding result has recently been obtained by Bernard-Rivi\`ere \cite{Bernard2011a}.

We now give a precise definition of a branched Willmore surface. 
\begin{definition} [Branched Willmore immersions and the divisor]\label{branchWillmore} 
Let $\Sigma$ be a closed Riemann surface. We say that an immersion $f:\Sigma \to \R^3 $ is a branched Willmore immersion if $\int_\Sigma |A|^2 d\mu <\infty$ and if there exists an at most finite set of points $ \mc {S}=\{p_1,\ldots, p_l\}$ such that $f$ is a smooth Willmore immersion on $\Sigma \backslash \mc{S}$. As above we assume that $\mc{S}$ only consists of finite area branch points. 

Each branch point $p_k\in \mc{S}$ has a well-defined multiplicity $m_k\in \N$. We denote by $ D = \sum_{k=1}^l m _k p_k$ the divisor of the branched Willmore surface and we let $ |D| =\sum_{k =1}^ l  m_k$ be the order of the divisor.              
\end{definition} 
\begin{remark}
By results of Huber \cite{Huber}, M\"uller-\v{S}ver\'ak \cite{Muller1995} and Kuwert-Li \cite{Kuwert2010} we can assume that every branched Willmore immersion is conformally parametrised away from $\mc{S}$.
\end{remark}
Next we recall a generalized Gauss-Bonnet and inversion formula for general surfaces with branch points and ends.
\begin{theorem}[{\cite[Theorem 4.1, 4.2 and Corollary 4.3, 4.4]{Nguyen2011}}]\label{thm_three}
Let $ f:\Sigma\to  \R^3$ be a branched conformal immersion of a Riemann surface $\Sigma$. We denote by $ E = \{ a_{1}, \dots , a_{b}\}$, $b\in \N_0$, the complete ends with multiplicity $k(a_i)+1$, $1\le i\le b$, and for each 
$p\in \Sigma \backslash E$ we denote by 
$m(p)\in \N$ the multiplicity of the point $p$. For $x_0\in \R^3$ we let $\tilde{f}=I_ {x_0} \circ f:=x_0+\frac{f-x_0}{|f-x_0|^2}:\Sigma\backslash f^{-1}(x_0) \to \R^3$ and 
we denote by $\tilde{\Sigma}=\tilde{f}(\Sigma \backslash f^{-1}(x_0))$ the image surface of $\tilde{f}$. Then we have the formulas
\begin{align}
 \int_\Sigma K d\mu =& 2\pi \Big( \chi_\Sigma-\sum_{i=1}^b (k(a_i)+1)+\sum_{p\in \Sigma\backslash E} (m(p)-1)\Big),\label{gauss-bonnet}\\
\int_{\widetilde{\Sigma}} \widetilde{K} d\widetilde{\mu}=&\int_\Sigma K d\mu +4\pi \Big(  \sum_{i=1}^b (k(a_i)+1)-\sum_{p\in f^{-1}(x_0)} m(p) \Big), \label{inversion1}\\
 \mc W ( \tilde f ) =& \mc W (f) + 4\pi \Big(\sum_{i=1} ^b ( k(a_i)+1) -  \sum_{p \in f ^ { -1} ( x_0)} m(p) \Big)\ \ \ \text{and} \label{inversion2}\\
\int_{\widetilde{\Sigma} }|\widetilde A|^{2}d \widetilde \mu   =&  \int_{\Sigma} |A|^2 d\mu + 8\pi \Big(\sum_{i=1} ^b ( k(a_i)+1) - \sum_{p \in f ^ { -1} (x_0)} m(p) \Big).\label{inversion3} 
\end{align} 
\end{theorem}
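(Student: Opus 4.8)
The plan is to prove the Gauss--Bonnet identity \eqref{gauss-bonnet} first and then to extract the three inversion formulas \eqref{inversion1}--\eqref{inversion3} from it, using the conformal change of the Gauss curvature and the M\"obius invariance of the tracefree energy.

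First I would establish \eqref{gauss-bonnet} in the case $E=\emptyset$, where $\Sigma$ carries only finite--area branch points $p_1,\dots,p_l$. Working in the conformal coordinates of the preliminaries, I excise coordinate disks $B_\e(p_k)$ and apply the classical Gauss--Bonnet theorem on $\Sigma_\e := \Sigma \setminus \bigcup_k B_\e(p_k)$, namely $\int_{\Sigma_\e} K\, d\mu + \int_{\partial\Sigma_\e}\kappa_g\, ds = 2\pi\chi(\Sigma_\e)$, with $\chi(\Sigma_\e)=\chi_\Sigma-l$. Since $|A|^2\in L^1$ forces $K\in L^1$, the interior term converges to $\int_\Sigma K\, d\mu$. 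The heart of the matter is the boundary term. Inserting the expansion \eqref{expansionu}, $u=(m_k-1)\log|z|+v+h$ with $h$ smooth harmonic and $v\in C^0\cap W^{1,2}\cap W^{2,1}$, into the conformal law $\kappa_g\, e^{u}=\kappa_0+\partial_\nu u$ (with $\kappa_0,\nu$ the Euclidean geodesic curvature and outer normal) gives
\begin{equation*}
\int_{\partial B_\e(p_k)} \kappa_g\, ds \;=\; 2\pi + \int_0^{2\pi} \e\, \partial_r u\, d\theta \;\longrightarrow\; 2\pi m_k ,
\end{equation*}
the regular parts $v,h$ producing no flux in the limit. Since $\partial\Sigma_\e$ carries the opposite orientation, $\int_{\partial\Sigma_\e}\kappa_g\, ds\to -2\pi\sum_k m_k$, and letting $\e\to 0$ yields \eqref{gauss-bonnet} with $E=\emptyset$.

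Next I would prove \eqref{inversion1} directly. Writing $\tilde f=I_{x_0}\circ f$ and using that $I_{x_0}$ is conformal on $\R^3$ with factor $|y-x_0|^{-2}$, the induced metrics satisfy $\tilde g=e^{2\phi}g$ with $\phi=-2\log|f-x_0|$, so the two--dimensional conformal change of Gauss curvature gives $\widetilde K\, d\widetilde\mu=(K-\Delta_g\phi)\, d\mu$ away from $f^{-1}(x_0)\cup E$, whence
\begin{equation*}
\int_{\widetilde\Sigma} \widetilde K\, d\widetilde\mu \;=\; \int_\Sigma K\, d\mu \;-\; \int_\Sigma \Delta_g\phi\, d\mu .
\end{equation*}
Since $\Sigma$ is closed and the nonsingular part of $\Delta_g\phi$ is integrable, the divergence theorem on the complement of small disks reduces $\int_\Sigma\Delta_g\phi\, d\mu$ to a sum of fluxes about $f^{-1}(x_0)$ and $E$. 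At $p\in f^{-1}(x_0)$ the expansion \eqref{eqn_loc_expansion} gives $|f-x_0|\simeq|z|^{m(p)}$, so $\phi\simeq-2m(p)\log|z|$ and the flux is $-4\pi m(p)$; at an end $a_i$ one has $|f|\simeq|z|^{-(k(a_i)+1)}$, so $\phi\simeq 2(k(a_i)+1)\log|z|$ and the flux is $+4\pi(k(a_i)+1)$. Collecting the signs produces exactly \eqref{inversion1}.

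To obtain \eqref{gauss-bonnet} with ends, I would choose $x_0\notin f(\Sigma)$, so that $f^{-1}(x_0)=\emptyset$ and, by the closure of branched conformal immersions under inversion recalled above, $\widetilde\Sigma$ is a closed surface whose only branch points are the former ends $a_i$, now of multiplicity $k(a_i)+1$. Applying the branch--point case to $\widetilde\Sigma$ and transferring through \eqref{inversion1} recovers \eqref{gauss-bonnet} in general (with $\chi_\Sigma$ the Euler characteristic of $\Sigma$ with the ends deleted). Finally, \eqref{inversion2} follows by combining \eqref{inversion1} with the M\"obius invariance of the tracefree energy: since $|\circo{A}|^2\, d\mu$ is a pointwise conformal invariant and $H^2-K=\tfrac12|\circo{A}|^2$, one has $\int_{\widetilde\Sigma}(\widetilde H^2-\widetilde K)\, d\widetilde\mu=\int_\Sigma(H^2-K)\, d\mu$, so that $\mc W(\tilde f)-\mc W(f)=\int_{\widetilde\Sigma}\widetilde K\, d\widetilde\mu-\int_\Sigma K\, d\mu$, which is \eqref{inversion1}. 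Identity \eqref{inversion3} is then purely algebraic via $|A|^2=4H^2-2K$, so that $\int|A|^2\, d\mu=4\mc W(f)-2\int_\Sigma K\, d\mu$ and the correction doubles to $8\pi(\cdots)$. I expect the main obstacle to be the rigorous justification of these boundary--flux computations in the borderline regularity at hand, rather than the topological bookkeeping: one must ensure that the regular parts of the conformal factor in \eqref{expansionu} and the nonsingular part of $\Delta_g\phi$ contribute no flux in the limit, which is exactly where the expansions of Theorem \ref{thm_expansion}, the membership $v\in C^0\cap W^{1,2}\cap W^{2,1}$, and the growth rates $|f|\simeq|z|^{m}$, $|\nabla f|\simeq|z|^{m-1}$ are indispensable.
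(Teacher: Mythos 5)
The paper itself contains no proof of Theorem \ref{thm_three}: it is imported wholesale from \cite{Nguyen2011}, so there is no internal argument to compare yours against line by line. That said, your outline follows what is essentially the standard (and presumably the cited) route: Gauss--Bonnet with branch points via boundary fluxes of the conformal factor, the two-dimensional conformal-change identity $\widetilde{K}\,d\widetilde{\mu}=(K-\Delta_g\phi)\,d\mu$ with $\phi=-2\log|f-x_0|$ for the inversion, and then \eqref{inversion2}, \eqref{inversion3} from \eqref{inversion1} via the pointwise M\"obius invariance of $|\circo{A}|^2\,d\mu$ and the algebra $|A|^2=4H^2-2K$. Your bookkeeping is consistent: combining the branch-point-only Gauss--Bonnet with \eqref{inversion1} at $x_0\notin f(\Sigma)$ does reproduce \eqref{gauss-bonnet}, and you correctly read $\chi_\Sigma$ as the Euler characteristic of the punctured surface --- this is the convention the paper actually uses (e.g.\ in Lemma \ref{lem_odd} and in the proof of Theorem \ref{thm_classify}); with the closed surface's Euler characteristic the formula would already fail for the catenoid.

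The one genuine gap is the justification of the flux limits in your proof of \eqref{inversion1}. You appeal to \eqref{eqn_loc_expansion}, but that expansion is part of Theorem \ref{thm_expansion}, whose hypothesis is that $f$ is a smooth \emph{Willmore} immersion near the singular point, whereas Theorem \ref{thm_three} concerns arbitrary branched conformal immersions. The estimates the paper records in that generality --- $|f(z)|\simeq |z|^m$, $|\nabla f(z)|\simeq |z|^{m-1}$ and \eqref{expansionu} --- are not enough: they only yield $|\nabla(\phi+2m\log|z|)|\le C/|z|$, hence a bounded but not necessarily vanishing remainder flux on $\partial B_\e$, so the limits $-4\pi m(p)$ at points of $f^{-1}(x_0)$ and $+4\pi(k(a_i)+1)$ at the ends do not follow from what you have quoted. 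What is needed is the refined expansion $f(z)-x_0=\Re(az^{m})+o(|z|^{m})$ together with its gradient counterpart, valid for $W^{2,2}$ branched conformal immersions (M\"uller--\v{S}ver\'ak \cite{Muller1995}, Kuwert--Li \cite{Kuwert2010}), applied at the ends after an auxiliary inversion turning them into finite-area branch points. With that substitution your argument closes; as written, the decisive estimate rests on a theorem whose hypotheses are not satisfied in the setting of the statement.
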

The last two result we need in the sequel are two classification results for minimal surfaces.
The first one is due to Osserman and gives a classification of complete minimal surfaces with finite total curvature equal to $8 \pi$. 
\begin{theorem}[{\cite[Theorem 3.4]{Osserman1964}}]\label{thm_osserman}\label{thm_hoss}
Let $ \Sigma\subset \R^3$ be a complete minimal surface with $ \int_{\Sigma} |A|^{2} = 8\pi$. Then $\Sigma$ is either isometric to Enneper's minimal surface or a catenoid.  
\end{theorem}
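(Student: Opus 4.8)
The plan is to reduce the statement to the structure theory of complete minimal surfaces of finite total curvature together with the Weierstrass--Enneper representation. First I would use minimality to rewrite the hypothesis: for a minimal surface the principal curvatures satisfy $\kappa_1 = -\kappa_2$, so $|A|^2 = \kappa_1^2 + \kappa_2^2 = -2K$ and the assumption $\int_\Sigma |A|^2 = 8\pi$ is equivalent to the total curvature condition $\int_\Sigma K\, d\mu = -4\pi$. Since this total curvature is finite, Osserman's structure theorem applies: $\Sigma$ is conformally a compact Riemann surface $\ov\Sigma$ of some genus $\gamma$ with finitely many points $p_1,\dots,p_r$ (the ends) removed, the Gauss map $g$, regarded as a meromorphic function, extends holomorphically to $\ov\Sigma \to \mbb{CP}^1$, and $\int_\Sigma K\, d\mu = -4\pi \deg g$. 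Hence $\deg g = 1$.

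Next I would extract the topology. A degree-one holomorphic map $\ov\Sigma \to \mbb{CP}^1$ is a biholomorphism, so necessarily $\gamma = 0$ and $\ov\Sigma \cong \mbb{CP}^1$; after a M\"obius change of coordinate we may assume $g(z) = z$. To count the ends I would invoke the Jorge--Meeks form of Gauss--Bonnet for complete finite-total-curvature minimal surfaces,
\[
\frac{1}{2\pi}\int_\Sigma K\, d\mu = \chi(\ov\Sigma) - \sum_{j=1}^r (I_j + 1),
\]
where $I_j \geq 1$ is the multiplicity of the $j$-th end. With $\chi(\ov\Sigma) = 2$ and $\int_\Sigma K\,d\mu = -4\pi$ this reads $\sum_{j=1}^r(I_j+1) = 4$; since each summand is at least $2$, the only possibilities are $r=2$ with $I_1 = I_2 = 1$ (two embedded ends) or $r=1$ with $I_1 = 3$ (one end of winding order three).

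Finally I would pin down the surface in each case via the Weierstrass data $(g,\omega)$, using $\phi_1 = \tfrac12(1-g^2)\omega$, $\phi_2 = \tfrac{\imath}{2}(1+g^2)\omega$, $\phi_3 = g\omega$ and $f = \Re \int(\phi_1,\phi_2,\phi_3)$. Finite total curvature forces $\omega$ to be meromorphic on $\mbb{CP}^1$, and the requirement that the $\phi_i$ define a regular conformal immersion away from the ends (equivalently, that the metric $(1+|g|^2)^2|\omega|^2$ neither vanishes nor blows up there) constrains the zeros and poles of $\omega$ to lie at the ends. In the one-ended case, placing the end at $\infty$ makes $\omega$ a nowhere-vanishing holomorphic form on $\C$, hence a constant multiple of $dz$, which yields Enneper's surface. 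In the two-ended case, placing the ends at $0$ and $\infty$ makes $\omega = c\,z^k\,dz$, and the vanishing of the real periods of the $\phi_i$ around the ends, together with completeness, forces $k = -2$ and $c$ real, which is precisely the catenoid. Up to rescaling and rigid motions these are the only two surfaces.

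I expect the main obstacle to be this last step: verifying that the period (well-definedness), regularity and completeness conditions rigidly determine the Weierstrass data, rather than merely restricting it, and confirming that the resulting surfaces are genuinely isometric to the catenoid and to Enneper's surface rather than to some allied or branched relative. The structure theorem and the degree computation, while substantial, are classical and may be cited wholesale from Osserman's framework.
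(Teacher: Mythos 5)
This statement is not proved in the paper at all: it is quoted wholesale from Osserman \cite{Osserman1964} (Theorem 3.4 there) and used as a black box in the proof of Theorem \ref{thm_classify}. So what you have written is a reconstruction of the classical argument rather than an alternative to anything in the text. Your outline is the standard one and its first three steps are correct: $|A|^2=-2K$ for minimal surfaces, so the hypothesis is $\int_\Sigma K\,d\mu=-4\pi$; Osserman's structure theorem gives a conformal compactification with meromorphically extending Gauss map $g$ and $\int_\Sigma K\,d\mu=-4\pi\deg g$, so $\deg g=1$, forcing genus $0$; and the Jorge--Meeks count $\sum_j(I_j+1)=4$ leaves exactly the two configurations you list (two simple ends, or one end of multiplicity three).

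There is, however, a genuine gap in the final step for the two-ended case: you cannot simultaneously normalise $g(z)=z$ \emph{and} place the two ends at $\{0,\infty\}$. Once you use the degree-one map $g$ to identify $\ov\Sigma$ with $\mathbb{CP}^1$ (so that $g=\mathrm{id}$), the only remaining freedom is rotation of $\R^3$, which acts on $g$ through $SU(2)\subset PSL(2,\C)$; the stabiliser of $\infty$ in $SU(2)$ acts on $\C$ only by rotations about $0$, so you may put one end at $\infty$, but the second end then sits at some $p\in\C$ which cannot be moved to $0$ unless it is already there. Assuming $p=0$ amounts to assuming the normals at the two ends are antipodal, which is a conclusion (a property of the catenoid), not a permissible normalisation. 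The fix is to run your own scheme honestly with ends $\{p,\infty\}$: regularity gives $\omega=c(z-p)^k\,dz$ (note also that your parenthetical is imprecise --- the metric $\tfrac14(1+|g|^2)^2|\omega|^2$ is non-degenerate precisely when the zeros of $\omega$ coincide with the poles of $g$ to twice the order, so zeros of $\omega$ away from the ends are allowed at poles of $g$; this causes no harm here only because the pole of $g$ lies at the end $\infty$); multiplicity one at both ends forces $k=-2$; and the three real period conditions $\Re\,\mathrm{Res}_p\phi_i=0$, which read $\Re(ic)=\Re(icp)=\Re(cp)=0$ up to constants, force both $c\in\R\setminus\{0\}$ and $p=0$, recovering the catenoid. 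The one-ended case as you wrote it is fine, since there $SU(2)$ is transitive on single points and the pole of $g$ coincides with the end, so $\omega$ is a zero-free polynomial differential, hence $c\,dz$, giving Enneper's surface. With that repair your sketch is a complete and correct proof of Osserman's theorem.
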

The second theorem classifies complete minimal surfaces with total bounded curvature equal to $ 16 \pi$ and conformal to a sphere $ \mbb{S}^ 2 $ with three points removed. The map $ g(z)$ is up to a biholomorphism the Gauss map and $ \omega$ is a $1$-form, both of which appear the Weierstrass-Enneper representation of the surface.
\begin{theorem}[{\cite[Theorem 3]{L'opez1992}}]\label{thm_lopez}
Let $ \Sigma\subset \R ^ 3 $ be a complete minimal surface with finite total curvature $ \int _{ \Sigma} |A|^ 2 d\mu = 16 \pi$ and suppose that  $\Sigma$ is conformal to $ \mbb{S}^2 \backslash \{ p_1, p_2 , p_3\}$ then up to homothety and rigidity in $\R^3$, 
 \begin{align*}
\Sigma & = \mbb{C} \backslash \{ \frac{1}{ \sqrt{ 3} } , - \frac{ 1 }{ \sqrt{3} } \} \\
g (z) &= B \frac{ z ^ 2 + c z + d } { z +a} \\
\omega &= \theta \frac{ (z +a) ^ 2 } { (z ^ 2 - \frac13 ) ^ 2 } d z 
\end{align*}
where 
\begin{enumerate} 
\item if $ a \neq \frac{ 1 }{ \sqrt { 3 }} , - \frac 1{ \sqrt { 3}} $ given $ r _1, r_2 \in \R, r _ 2 \neq 0$  then
\begin{align*}
c =0, \quad 12 a ^ 4 - ( r_2 ^2 + 3 r_1 ^ 2 + 4 ) a ^ 2 - r_1 ^2 =0, \quad a ^ 2 ( 1 -3d) ^ 2 = r _1 ^2 \\
\theta = 1, \quad B^ 2 = \frac{ 3 | 3a^2 -1|^2 } { r_2^2}
\end{align*} 
\item or $$ c= 0 , \quad a = \frac 1 {\sqrt{3}}, - \frac 1 {\sqrt{3}}, \quad d = 1, \quad \theta =1, \quad B \in \R \backslash \{0\}.$$  
\end{enumerate}

\end{theorem}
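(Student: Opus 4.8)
The plan is to reconstruct the classification from the Weierstrass--Enneper representation together with the period (flux) conditions, which is the classical route for complete minimal surfaces of finite total curvature. First I would invoke Osserman's theorem: since $\Sigma$ is complete with finite total curvature and conformal to $\mbb{S}^2\setminus\{p_1,p_2,p_3\}$, its conformal compactification is $\mbb{CP}^1$ and its Weierstrass data, the Gauss map $g$ and the one-form $\omega$, extend to a meromorphic function and a meromorphic one-form on all of $\mbb{CP}^1$, with the three punctures becoming complete ends. For a minimal surface $|A|^2=-2K$, and the total curvature of such a surface equals $-4\pi\deg(g)$, so $\int_\Sigma|A|^2\,d\mu=16\pi$ forces $\deg(g)=2$. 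I would then use the M\"obius group of the domain to place two of the ends at $\pm c$ and the third at $\infty$, keeping $c$ and the overall scale as parameters to be determined; the value $c^2=\tfrac13$, and hence the normalisation $\Sigma=\C\setminus\{\tfrac1{\sqrt3},-\tfrac1{\sqrt3}\}$ in the statement, should drop out of the period analysis below.

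Next I would pin down the shape of $(g,\omega)$ from the local behaviour at interior points and at ends. Writing the degree-two Gauss map in the general form $g=B\,(z^2+cz+d)/(z+a)$, with a finite simple pole at $z=-a$ and a simple pole at $\infty$, regularity of the induced metric $ds=\tfrac12(1+|g|^2)|\omega|$ at the interior point $z=-a$ forces $\omega$ to have a double zero there, while completeness together with finiteness of the total curvature at the ends forces the coordinate one-forms $\phi_1=\tfrac12(1-g^2)\omega$, $\phi_2=\tfrac{i}{2}(1+g^2)\omega$, $\phi_3=g\omega$ to have at worst double poles at $z=\pm1/\sqrt3$. These two requirements are matched exactly by the ansatz $\omega=\theta\,(z+a)^2/(z^2-\tfrac13)^2\,dz$, and one checks that the end at $\infty$ is then complete because $g$ has a pole there while $\omega$ stays finite, so $ds$ still diverges.

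The heart of the argument is the imposition of the period conditions guaranteeing that $X=\Re\int(\phi_1,\phi_2,\phi_3)$ is well defined on the multiply connected domain. For each loop $\gamma_j$ around an end this requires $\Re\oint_{\gamma_j}\phi_k=0$ for $k=1,2,3$, equivalently $\Res_{p_j}\phi_k\in\R$ by the residue theorem. I would compute these residues by expanding each $\phi_k$ to second order at the double poles $z=\pm1/\sqrt3$ and at $\infty$ and collecting the coefficient of the simple pole; the resulting reality conditions, combined with the regularity normalisation already imposed, yield a nonlinear algebraic system in $a,c,d,B,\theta$ (and $c^2$, the squared end-position). Solving this system, and separating according to whether the interior pole $-a$ collides with an end, should produce precisely the two cases in the statement: the generic case $c=0$ with the quartic relation for $a$, the relation $a^2(1-3d)^2=r_1^2$, and the value $B^2=3|3a^2-1|^2/r_2^2$; and the degenerate case $a=\pm1/\sqrt3$, where one finds $c=0$, $d=1$, $\theta=1$ and $B$ a free real parameter.

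I expect the main obstacle to be precisely this residue-and-solve step. Extracting residues at double poles requires the full second-order Taylor data of both $g$ and $\omega$, and the resulting system is genuinely nonlinear, so the emergence of the clean configuration $z^2=\tfrac13$ and of the two-parameter family indexed by $(r_1,r_2)$ is the delicate computation rather than any conceptual difficulty. A secondary point that must not be skipped is the sufficiency direction: having solved the \emph{necessary} period relations, one still has to verify that the resulting data $(g,\omega)$ actually close up to a complete, properly immersed minimal surface realising total curvature $16\pi$, rather than merely satisfying the linear period identities formally.
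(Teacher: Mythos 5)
You should first be aware of the status of this statement in the paper: it is not proved there at all. It is quoted, with attribution, from L\'opez's classification paper \cite{L'opez1992} (Theorem 3 of that paper) and is used purely as a black box in Case 3(c) of the proof of Theorem \ref{thm_classify} to identify the trinoids. So there is no internal proof to compare your proposal against; the only meaningful comparison is with L\'opez's own argument, and at the level of strategy your outline coincides with it: Osserman's theorem to extend the Weierstrass data $(g,\omega)$ meromorphically to $\mbb{CP}^1$, the identity $\int_\Sigma |A|^2\, d\mu = -2\int_\Sigma K\, d\mu = 4\pi \deg (g)$ forcing $\deg(g)=2$, metric regularity tying the zeros of $\omega$ to the interior poles of $g$, the degree count (genus $0$, three ends, $\deg g = 2$) forcing each end to have multiplicity one and hence pole order at most two for the $\phi_k$, and finally the period conditions $\Re \oint_{\gamma_j} \phi_k = 0$ reduced to reality of residues.

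Judged as a proof, however, your proposal has a genuine gap, and it is exactly the one you flag yourself: the residue computation at the double poles, the solution of the resulting nonlinear system in $(a,c,d,B,\theta)$, the emergence of the parameters $(r_1,r_2)$ and of the two cases, and the sufficiency verification that the solutions close up into complete immersed surfaces of total curvature $16\pi$ are all deferred as ``the delicate computation.'' That computation is not a technical afterthought; it is the content of L\'opez's theorem, so what you have is a correct plan rather than a proof. Two further points need repair before the plan is even watertight. First, the normalisation $\Sigma = \C \setminus \{\pm 1/\sqrt{3}\}$ does not ``drop out of the period analysis'': M\"obius transformations of the domain act three-transitively on $\mbb{CP}^1$, so the punctures can be placed at $\pm 1/\sqrt{3}$ and $\infty$ for free, and what the periods actually determine are the relations among $a, c, d, B, \theta$ relative to that normalisation. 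Second, you assert without justification that the degree-two Gauss map can be written as $g = B(z^2+cz+d)/(z+a)$, i.e.\ with two simple poles, one of them at the end placed at $\infty$. This is true but needs an argument: by Riemann--Hurwitz a degree-two map of $\mbb{CP}^1$ has exactly two ramification points, so at least one of the three ends is unramified for $g$; place that end at $\infty$ and rotate in $\R^3$ so that $g(\infty)=\infty$, which yields a simple pole at $\infty$ and one further simple pole at some $-a \in \C$. Without this (or some substitute), the case of $g$ a quadratic polynomial (double pole at the end) is excluded by fiat, and your classification would be incomplete. Similarly, you only check that the stated $\omega$ is \emph{consistent} with the regularity and completeness constraints; a classification requires the converse direction, namely that those constraints together with $\deg \omega = -2$ on $\mbb{CP}^1$ \emph{force} $\omega$ to have exactly double poles at the finite ends, a double zero at $-a$, and no pole at $\infty$.
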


\section{Willmore Spheres with Branch Points}\label{sec_willmore_spheres}

In this section we consider branched Willmore immersions $f:\Sigma \to \R^3$ with $\int_\Sigma |A|^2d\mu <\infty$ and we let $Y:\Sigma \backslash \mc{S} \to Q^4$ be the conformal Gauss map associated to $f$ (see section A.3 for the definition). Here $\mc{S}=\{p_1,\ldots,p_l\}$, $l\in \N_0$, is the at most finite set of finite area branch points of $f$. Our goal is to show that the $4$-form $ \la \alpha, \alpha \ra^{ (4,0)}=\la Y_{zz},Y_{zz} \ra dz^4$ is meromorphic with poles of order at most $2$.
 
In a first step we use Theorem \ref{thm_expansion} in order to show that the conformal Gauss map $Y$ associated to a branched Willmore immersion $f$ as above can be estimated close to a branch point by $|\la Y_{zz} , Y_{zz} \ra| \leq C| z| ^ { -2 - \e } $ for any $\e>0$. Analysing the Laurent expansion then yields that the meromorphic $4$-form $\la Y_{zz},Y_{zz} \ra dz^4$ has at worst double poles. 
\begin{theorem}\label{poles}
Let $f:\Sigma \to \R^3$ be a branched Willmore immersion and let $Y$ be the conformal Gauss map associated to $f$. Then the $4$-form $ \la \alpha, \alpha \ra^{ (4,0)}$ is meromorphic with poles of order at most $2$.
\end{theorem}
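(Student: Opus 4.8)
The plan is to establish the estimate $|\la Y_{zz}, Y_{zz}\ra| \leq C|z|^{-2-\e}$ near each branch point and then invoke the meromorphy of $\la\alpha,\alpha\ra^{(4,0)}$ away from $\mc{S}$ (a standard consequence of the Willmore equation, as in Bryant) together with a Laurent-expansion argument to conclude that the pole order is at most $2$. First I would recall from the conformal-geometry setup (section A.3) the explicit expression for the conformal Gauss map $Y$ and its second derivative $Y_{zz}$ in terms of the immersion $f$, the conformal factor $u$, the mean curvature $H$, and the trace-free part of the second fundamental form. The key point is that $\la Y_{zz}, Y_{zz}\ra$ is, up to conformally invariant factors, controlled by the trace-free second fundamental form $\circo{A}$ together with derivatives of $H$; schematically one expects $|\la Y_{zz}, Y_{zz}\ra| \lesssim e^{-2u}\big(|\circo A|^2 + (\text{lower order in } H)\big)$ in conformal coordinates, so that the problem reduces to controlling these curvature quantities near the branch point.

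The heart of the matter is then purely the decay estimate, and here I would feed in Theorem \ref{thm_expansion}. That theorem gives, for a branch point of multiplicity $m$, the pointwise bounds $|\nabla^k A(z)| \leq C_\e |z|^{1-k-m-\e}$ for all $\e>0$, together with the conformal-factor expansion $u(z) = (m-1)\log|z| + v(z) + h(z)$ from \eqref{expansionu}, whence $e^{u}\simeq |z|^{m-1}$ and $e^{-2u}\simeq |z|^{-2(m-1)}$. Plugging $k=0$ into the curvature estimate gives $|A(z)| \leq C_\e |z|^{1-m-\e}$, so $|A|^2 \leq C_\e |z|^{2-2m-2\e}$, and multiplying by the conformal weight $e^{-2u}\simeq |z|^{2-2m}$ produces $e^{-2u}|A|^2 \leq C_\e |z|^{4-4m-2\e}$. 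The terms involving $\nabla H$ are handled identically with $k=1$. Combining these yields precisely the asserted bound $|\la Y_{zz}, Y_{zz}\ra| \leq C_\e |z|^{-2-\e}$ after the weights are assembled correctly (the key numerology being that the geometric scaling of a conformally invariant $4$-form cancels the $m$-dependence down to the universal exponent $-2$).

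Finally I would pass from the pointwise bound to the statement about pole order. On $\Sigma\backslash\mc{S}$ the Willmore equation forces $\la\alpha,\alpha\ra^{(4,0)}$ to be holomorphic, so near a puncture it admits a Laurent expansion $\la Y_{zz}, Y_{zz}\ra = \sum_{n} c_n z^n$ with respect to the local conformal coordinate. The estimate $|\la Y_{zz}, Y_{zz}\ra| \leq C_\e |z|^{-2-\e}$, valid for every $\e>0$, forces $c_n = 0$ for all $n \leq -3$: indeed any coefficient of $z^{-3}$ or lower would make the function grow at least like $|z|^{-3}$, violating the bound once $\e$ is taken small. Hence the worst singularity is a double pole, which is exactly the claim. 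The main obstacle I anticipate is bookkeeping the conformal weights in the expression for $Y_{zz}$ so that the $m$-dependent exponents cancel to give the universal bound $-2-\e$; once the correct formula for $\la Y_{zz}, Y_{zz}\ra$ is in hand, the estimate and the Laurent argument are routine.
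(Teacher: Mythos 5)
Your high-level strategy coincides with the paper's: establish the pointwise bound $|\la Y_{zz},Y_{zz}\ra|\le C_\e |z|^{-2-\e}$ near each branch point, invoke holomorphy of $\la \alpha,\alpha\ra^{(4,0)}$ away from $\mc{S}$, and kill all Laurent coefficients $c_n$ with $n\le -3$ by Cauchy estimates. The Laurent step is fine. The genuine gap is in the middle step, which you declare ``routine'' but which is in fact the entire content of the paper's proof. First, your schematic formula is wrong: by \eqref{eqn_main},
\begin{align*}
\la Y_{zz},Y_{zz}\ra = H_{zz}\varphi + \tfrac14 H^2\varphi^2 - H_z(\varphi e^{-\lambda})_z e^{\lambda},
\end{align*}
so the quartic form is not controlled by $e^{-2u}\big(|\circo A|^2+\text{lower order}\big)$; it involves the second coordinate derivative of $H$ and, crucially, the first coordinate derivative $\varphi_z$ of the Hopf differential together with $\lambda_z$. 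Note moreover that your own numerology refutes your formula: $e^{-2u}|A|^2\lesssim |z|^{4-4m-2\e}$, which is far worse than $|z|^{-2-\e}$ as soon as $m\ge 2$, and no appeal to conformal invariance repairs this. The actual cancellation happens term by term in \eqref{eqn_main}, because $\varphi$ carries a factor $e^{\lambda}\simeq |z|^{2m-2}$, giving $|\varphi|\le C|z|^{m-1-\e}$, which is then paired against $|H_{zz}|\le C|z|^{-1-m-\e}$ and so on.

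Second, and more seriously, Theorem \ref{thm_expansion} does not directly bound the two quantities $\lambda_z$ and $\varphi_z$ appearing in the third term, and your plan (``the terms involving $\nabla H$ are handled identically with $k=1$'') never addresses them. For $\lambda_z$, the expansion \eqref{expansionu} only gives $v\in C^0\cap W^{1,2}\cap W^{2,1}$, which yields no pointwise gradient bound; the paper must first run $L^p$-theory on \eqref{pseudoliouville} to upgrade to $u=(m-1)\log|z|+w$ with $w\in C^{1,\alpha}$, whence $|\lambda_z|\le C|z|^{-1}$. For $\varphi_z$, the needed estimate is $|\varphi_z|\le C|z|^{m-2-\e}$ (the paper's \eqref{claim}), and this is the heart of the proof: it does not follow by formally differentiating the curvature bounds, but is derived from the Codazzi equation $\varphi_{\bar z}=e^{\lambda}H_z$, a Cauchy-transform representation $\varphi(z)=z^{-1}\big(I(z)-I(0)\big)$ of its solution, and, for $m\ge 2$, the Caffarelli--Friedman-type expansion lemma of \cite{kuwert07} to identify and eliminate the harmonic polynomial part, with a separate argument for $m=1$ using the mean curvature expansion \eqref{expansionH}. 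Without proofs of these two estimates your key decay bound does not close, so the proposal has a genuine gap precisely at the step you labelled routine.
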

\begin{proof} 
The immersion $ f$ is a smooth Willmore immersion away from the at most finite set $\mc{S}=\{p_1,\ldots,p_l\}$ and hence we know from the results in section A.3 of the appendix that $\la \alpha,\alpha\ra^{(4,0)}$ is holomorphic away from $\mc{S}$. Therefore it remains to study the quartic form close to the finitely many points in $\mc{S}$. 
Since this is a local problem and using the results from section $2$ we assume from now on that we only have one finite area branch point $p$ with multiplicity $m(p)=m\in \N$, that there exists a conformal parametrization $f:D_1\backslash \{0\} \to \R^3\backslash \{0\}$ of $\Sigma$ and that all the results from Theorem \ref{thm_expansion} are applicable.

Next we recall the formula \eqref{eqn_main} from the appendix
\begin{align}
\la Y _ {zz} , Y _ { zz} \ra = H _ { zz} \varphi + H^2 \varphi^2 / 4 - H_{z} ( \varphi e ^ { - \lambda } ) _ { z} e ^{\lambda}=:I+II+III,
\end{align} 
where $\lambda=2u$ and $\varphi$ is the Hopf differential. 

Now we know by Theorem \ref{thm_expansion} that for all $z\in D_1\backslash \{0\}=:D_1^\star$ and all $k\in \N_0$, $\e>0$ we have
\begin{align*}
|\nabla^k A|(z) \le C | z|^{1-k-m-\e}.
\end{align*}
Furthermore we note that 
 \begin{align}
| \varphi | (z) = | \circo A |(z) e ^ {\lambda(z) }\le C|z|^{m-1-\e} \label{estvarphi} 
\end{align} 
where we used \eqref{expansionu}. Hence we can estimate for $\e<\frac14$
\begin{align}
 |I|+|II|\le C (|z|^{-1-m-\e} |z|^{m-1-\e}+|z|^{2-2m-2\e}|z|^{2m-2-2\e})\le C|z|^{-2-2\e}. \label{I+II}
\end{align}
Therefore it remains to estimate
\begin{align*}
III=H_z ( \varphi e ^ { -\lambda} ) _ z e ^ {  \lambda} = H _z \varphi_z  + H_{z}\varphi  \lambda_z=:III_a+III_b.
\end{align*}

We start with the term $III_b$. Using the facts that $|A(z)|\le C|z|^{1-m-\e}$ and $e^{2u(z)}\le C|z|^{2m-2}$ for all $z\in D_1^\star$ we conclude that
\[
 K_ge^{2u} \in L^p(D_1)\ \ \ \forall \ \ 1\le p<\infty.
\]
From \eqref{pseudoliouville} and standard $L^p$-theory we then get that $v\in W^{2,p}_{\text{loc}}(D_1)$ for all $1\le p<\infty$. Therefore we can improve \eqref{expansionu} to get for all $z\in D^\star_{\frac12}$
\begin{align}
 u(z)=(m-1)\log |z| +w(z), \label{expansionu2}
\end{align}
where $w\in C^{1,\alpha}(D_{\frac12})$ for every $0<\alpha<1$.

In particular we conclude that for every $z\in D^\star_{\frac12}$ we have
\begin{align*}
 |\nabla \lambda(z)| \le \begin{cases} C &\mbox{if } m=1 \\
                      Cm|z|^{-1} &\mbox{if } m\ge 2.
                     \end{cases}
\end{align*}
Combining this with the estimates for $|H_z|$ and $|\varphi|$ we get
\begin{align}
 |III_b|\le C|z|^{-m-\e}|z|^{-1}|z|^{m-1-\e}\le C|z|^{-2-2\e}.\label{IIIb}
\end{align}

In order to estimate the term $III_a$ we need an estimate for $\varphi_z$. We claim that for any $z\in D^\star_{\frac14}$ we have
\begin{align}
 |\varphi_z(z)|\le C|z|^{m-2-\e}. \label{claim}
\end{align}
Combining this again with the estimate for $|H_z|$ we then conclude
\begin{align}
 |III_a|\le C|z|^{-m-\e}|z|^{m-2-\e} \le C|z|^{-2-2\e}.\label{IIIa}
\end{align}
The estimates \eqref{I+II}, \eqref{IIIb} and \eqref{IIIa}, together with the Laurent expansion, give the desired result for $\e$ small enough.

In order to prove \eqref{claim} we use the Codazzi equation in complex form, which is given by 
\begin{align*}
\varphi_{ \ov z} = e ^ \lambda H_z\ \ \ \text{on}\ \ D_1^\star.  
\end{align*}   
Since $\varphi \in L^p_{\text{loc}}(D_1)$ for all $1\le p<\infty$ and $\varphi_{\ov z} \in L^q_{\text{loc}}(D_1)$ for all $1\le q<2$, the Codazzi equation is actually satisfied weakly on all of $D_1$.

For technical reasons we introduce a cut-off function $\eta \in C^\infty_c(D_1)$ with $0\le \eta \le 1$, $\eta\equiv 1$ on $D_{\frac12}$ and $\eta \equiv 0$ on $D_1\backslash D_{\frac34}$. Note that that we can choose $\eta$ in such a way that $||\nabla^k \eta||_{L^\infty(D_1)}\le c$ for all $k\in \N$.
Now we define $\psi=\eta \varphi$ and we note that $\psi$ satisfies
\begin{align}
 \psi_{\ov z}=\eta e^\lambda H_z+\eta_{\ov z} \varphi =:v. \label{eqpsi}
\end{align}
Using the previous results we get the estimate
\begin{align}
|v(z)|\le C |z|^{m-2-\e}. \label{estv}
\end{align}

The solution of equation \eqref{eqpsi} is for every $z\in \C$ given by 
\begin{align*}
\psi(z)=\mathcal{C}(v) (z)+h(z),
\end{align*}
where $h$ is some holomorphic function and 
\[
 \mathcal{C}(v) (z)= - \frac { 1 }{ \pi } \int_{ \C} \frac{v(\xi)} { \xi - z    } d \xi
\]
is the Cauchy transform of $v$ (see e.g. \cite{astala}, Chapter 4). Since the holomorphic function is smooth at the origin we assume without loss of generality that $h\equiv 0$. Note that the operator $v\mapsto \psi$ maps $L^p$ into $W^{1,p}$ for any $1<p<\infty$ and hence we conclude that $\psi\in W^{1,q}(D_1)$ for all $1\le q<2$.
Since $v\in L^p(\C)$ for every $1<p<2$ we know that $\mathcal{C}(v)\in L^{q}(\C)$ for every $2<q<\infty$.

Now we calculate $ \frac{ 1}{ \xi ( \xi - z ) } = -\frac{1}{ z} \frac{1}{\xi} + \frac {1}{z} \frac{ 1 }{ \xi - z } $ and hence we conclude 
\begin{align}
\psi(z)=\mathcal{C}(v)(z) &= -\frac{1}{ \pi} \int_{\C} \frac{ \xi v(\xi)}{ \xi( \xi-z)} d\xi \nonumber \\
& = \frac{1}{ \pi z}\int _{\C}\frac {\xi v(\xi)}{\xi} d \xi -  \frac{ 1}{ \pi z}\int_{\C} \frac { \xi v(\xi) }  { \xi - z } d \xi \nonumber \\
& = \frac{1}{z} ( I ( z) - I ( 0) ) ,\label{reppsi}
\end{align} 
where $  I(z) =\mathcal{C}(\tilde{v})(z)$ and $\tilde{v}(w)=wv(w)$ for all $w\in \C$.
Using \eqref{estvarphi} we get that
\begin{align}
 |I(z)-I(0)|\le C|z| |\psi(z)|\le C|z|^{m-\e}. \label{estI}
\end{align}
Moreover we have that $|\tilde{v}(w)|\le C\chi_{D_1} |w|^{m-1-\e} \in L^p(\C)$ for all $1\le p <\infty$ and hence $I(z)-I(0)$ satisfies for all $z\in \C$
\begin{align}
 \Big(I(z)-I(0) \Big)_{\ov z} = \tilde{v}(z). \label{eqI}
\end{align}

Note that this is again a Cauchy-Riemann equation, but compared to the Codazzi equation, we improved the decay of the right hand side by the order of one. 

Next we differentiate \eqref{eqI} with respect to $z$ in order to get 
\begin{align}
 \Delta \Big(I(z)-I(0) \Big)= \tilde{v}_z(z)=v(z)+z v_z(z).\label{laplaceI}
\end{align}

In the following we consider two cases.
\newline
\underline{Case 1:} $m\ge 2$

First we note that for $z\in D^\star_{\frac12}$ we have
\[
 v(z)+zv_z(z)=e^{\lambda(z)} \Big( H_z(z)+z \lambda_z (z) H_z(z)+zH_{zz}(z) \Big) 
\]
and by using the previous decay estimates we conclude for all $z\in D^\star_{\frac12}$ 
\begin{align*}
 |\Delta \Big(I(z)-I(0) \Big)| \le C|z|^{m-2-\e} = C|z|^{(m-1)-2+\beta},
\end{align*}
where $0<\beta=1-\e<1$. Moreover we estimate for $0<\varrho<\frac12$
\[
 \int_{D_\varrho} |z|^{-(m-1)} |\Delta \Big(I(z)-I(0) \Big)|\,\ dz \le C\int_{D_\varrho} |z|^{-1-\e}\,\ dz=C_\e \varrho^\beta.
\]

The last two estimates allow us to apply Lemma 2.1 of \cite{kuwert07} (which is an extension of Lemma 1.1 in \cite{caffarelli}) in order to get for every $z\in D_{\frac14}^\star$
\begin{align*}
 I(z)-I(0) =& P(z) +O(|z|^{m-\e})\ \ \ \text{and}\ \ \
 \nabla I(z)= \nabla P(z) +O(|z|^{m-1-\e}),
\end{align*}
where $P$ is harmonic polynomial of degree at most $m-1$. Combining this expansion with \eqref{estI} we get that $P\equiv 0$ and hence we have for every $z\in D^\star_{\frac14}$
\[
 |I_z(z)|\le C|z|^{m-1-\e}.
\]

Differentiating \eqref{reppsi} with respect to $z$ we get for any $z\in D^\star_{\frac14}$
\[
 |\varphi_z(z)| =|\psi_z(z)|= |-z^{-2} \Big( I(z)-I(0) \Big) +z^{-1} I_z(z)|\le C|z|^{m-2-\e}.
\]
\underline{Case 2:} $m=1$

In this case we use \eqref{expansionH} in order to conclude that for $z\in D^\star_{\frac12}$ we have $H_z(z)=\frac{H_0}{2} z^{-1} + \omega_z(z)$, where $\omega \in W^{2,p}(D_{\frac12})$ for any $1\le p<\infty$, and therefore
\[
 \big( zv(z) \big)_z=\big( e^{\lambda(z)} (\frac{H_0}{2}+z\omega_z(z)) \big)_z=e^{\lambda(z)}\big( \lambda_z(z) ( \frac{H_0}{2}+z\omega_z(z))+\omega_z(z)+z\omega_{zz}(z) \big).
\]
Since $e^\lambda$ and $\lambda_z$ are bounded, we conclude that $ \big( zv(z) \big)_z\in L^p(D_{\frac12})$ for all $1\le p<\infty$. Standard $L^p$-theory applied to \eqref{laplaceI} and the Sobolev embedding theorem therefore imply that 
\[
 I \in W^{2,p}_{\text{loc}}\cap C^{1,\alpha}_{\text{loc}}(D_{\frac12})\ \ \ \forall \,\ 1\le p< \infty\ \ \ \text{and} \,\ \forall \,\ 0<\alpha <1.
\]
Differentiating \eqref{reppsi} again with respect to $z$ we get for any $z\in D^\star_{\frac14}$
\[
 |\varphi_z(z)| =|\psi_z(z)|= |-z^{-2} \Big( I(z)-I(0) \Big) +z^{-1} I_z(z)|\le C|z|^{-1}.
\]
\end{proof}
We note that the previous proof yields a slight improvement of Theorem \ref{thm_expansion} in the case $m=1$.
\begin{corollary}\label{improve}
Let $f$ and $\Sigma$ be as in Theorem \ref{thm_expansion} with $m=1$. Then for all $0< \alpha < 1$ we have the expansion
\begin{align*}
 |A|(z)=A_0 |\log |z||+C^{0,\alpha}_{\text{loc}},
\end{align*}
where $A_0\in \R^+$. 
\end{corollary}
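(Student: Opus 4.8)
The plan is to reduce everything to the two-dimensional algebraic identity for the norm of the second fundamental form. Writing $\kappa_1,\kappa_2$ for the principal curvatures one has $H=\tfrac12(\kappa_1+\kappa_2)$ and $|\circo A|^2=\tfrac12(\kappa_1-\kappa_2)^2$, so that
\begin{align*}
|A|^2=\kappa_1^2+\kappa_2^2=2H^2+|\circo A|^2 .
\end{align*}
It therefore suffices to understand the singular behaviour of the two summands $H$ and $\circo A$ separately near the branch point, which (as in the proof of Theorem \ref{poles}) we may again assume to be a single point with $m=1$.

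For the mean curvature I would invoke the expansion \eqref{expansionH} from Theorem \ref{thm_expansion}: $H(z)=H_0\log|z|+H_1(z)$ with $H_1\in W^{2,p}_{\loc}(D_1)$ for every $p<\infty$, hence $H_1\in C^{1,\alpha}_{\loc}$ by Sobolev embedding. Since $\log|z|\to-\infty$ as $z\to 0$, the sign of $H$ is eventually constant, so on a punctured neighbourhood $|H|=|H_0|\,|\log|z||-\operatorname{sgn}(H_0)H_1$, and consequently $\sqrt2\,|H|=A_0|\log|z||+C^{0,\alpha}_{\loc}$ with $A_0:=\sqrt2\,|H_0|\in\R^+$. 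For the tracefree part I would reuse Case $2$ of the proof of Theorem \ref{poles}: there one obtains $\varphi=\psi=z^{-1}(I(z)-I(0))$ with $I\in C^{1,\alpha}_{\loc}(D_{1/2})$, so that $\varphi$ is bounded near the origin; together with $\lambda=2u=2w$, $w\in C^{1,\alpha}$, this shows that $|\circo A|=|\varphi|e^{-\lambda}$ is bounded (though not continuous in general, since $\varphi$ carries an angular factor $\bar z/z$).

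I would then split
\begin{align*}
|A|-A_0|\log|z||=\big(\sqrt2\,|H|-A_0|\log|z||\big)+\big(|A|-\sqrt2\,|H|\big),
\end{align*}
where the first bracket lies in $C^{0,\alpha}_{\loc}$ by the previous paragraph and the second bracket equals $|\circo A|^2\big(|A|+\sqrt2\,|H|\big)^{-1}$. Since $|\circo A|$ is bounded and $\sqrt2\,|H|\simeq A_0|\log|z||\to\infty$, this remainder is bounded by $C/|\log|z||$ and in particular extends continuously by $0$ at the branch point.

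The main obstacle is precisely the regularity of this last remainder. The estimate $|\circo A|^2(|A|+\sqrt2|H|)^{-1}\le C/|\log|z||$ already shows that the genuinely singular part of $|A|$ is captured exactly by the leading term $A_0|\log|z||$; however $|z|\mapsto 1/|\log|z||$ is not Hölder continuous at the origin, so mere boundedness of $\circo A$ does not by itself deliver the $C^{0,\alpha}$ conclusion. To close the argument one must extract more than a bound on $\circo A$: either a cancellation improving the decay of the numerator $|\circo A|^2$, or a direct difference-quotient estimate of the remainder built from the $C^{1,\alpha}$ control of $H_1$ and of $I$. I expect this quantitative Hölder estimate for the lower-order term, rather than the identification of the coefficient $A_0=\sqrt2\,|H_0|$, to be the real content of the corollary.
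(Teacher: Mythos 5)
Your reduction is in fact the same as the paper's: the algebraic identity $|A|^2=2H^2+|\circo A|^2$, the expansion \eqref{expansionH} for the mean curvature part, and the representation $\varphi=\psi=z^{-1}\bigl(I(z)-I(0)\bigr)$ with $I\in C^{1,\alpha}_{\text{loc}}(D_{\frac12})$ from Case 2 of the proof of Theorem \ref{poles}. The obstacle you flag at the end is a genuine gap in your argument as written: from boundedness of $\circo A$ alone, the cross term $|A|-\sqrt 2|H|=|\circo A|^2\bigl(|A|+\sqrt2 |H|\bigr)^{-1}$ is only $O(1/|\log|z||)$, and since $1/|\log|z||$ is not H\"older at the origin, what you actually prove is the expansion with a $C^{0}_{\text{loc}}$ remainder, not a $C^{0,\alpha}_{\text{loc}}$ one.

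What you should know, however, is that the paper does not close this gap either; its proof consists precisely of the claim you declined to make. The paper asserts that $I\in C^{1,\alpha}_{\text{loc}}$ and \eqref{reppsi} give $\varphi\in C^{0,\alpha}_{\text{loc}}(D_{\frac14})$, hence $|\circo A|\in C^{0,\alpha}_{\text{loc}}$ via \eqref{estvarphi} and \eqref{expansionu2}, and that the corollary then follows from \eqref{expansionH}. Your observation about the angular factor defeats the first step: Taylor expanding $I$ at $0$ gives $\varphi(z)=I_z(0)+I_{\ov z}(0)\,\frac{\ov z}{z}+O(|z|^{\alpha})$, and by \eqref{eqI} one has $I_{\ov z}(0)=\tilde v(0)=\frac{H_0}{2}e^{\lambda(0)}$, which is nonzero whenever $H_0\neq 0$; so in that case $\varphi$ is not even continuous at the origin. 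Moreover, even granting $|\circo A|\in C^{0,\alpha}_{\text{loc}}$, the paper's final step would still need your cross-term estimate, which is non-H\"older unless $|\circo A|(0)=0$. Both failures occur simultaneously for the paper's own model singularity, the inverted half-catenoid: there $H_0\neq 0$ and $|\circo A|(0)=\sqrt 2$, since near the branch point the surface is the graph of $-|x|^2\log|x|$ up to better terms, whose trace-free Hessian is $\delta_{ij}-2x_ix_j/|x|^2$; consequently $|A|-A_0|\log|z||$ really does decay only like $1/|\log|z||$, and the cancellation you hoped for does not occur. In short, the missing step cannot be supplied in general: the statement that is true (and that your proposal proves) has a $C^0_{\text{loc}}$ remainder, while the $C^{0,\alpha}_{\text{loc}}$ version fails for the inverted catenoid, and the paper's proof breaks down at exactly the point you identified.
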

\begin{proof}
The Gauss equation yields
\[
 |A|^2=|\circo A|^2+\frac12 H^2.
\]
In the proof of Theorem \ref{poles} for $m=1$ we showed that the function $I$ is in $C^{1,\alpha}_{\text{loc}}(D_{\frac12})$ and hence, using \eqref{reppsi} and the fact that $\varphi=\psi$ in $D_{\frac12}$, we get
\[
 \varphi \in C^{0,\alpha}_{\text{loc}}(D_{\frac14}).
\]
Combining this with \eqref{estvarphi} and \eqref{expansionu2} we get
\[
 |\circo A| \in C^{0,\alpha}_{\text{loc}}(D_{\frac14}).
\]
Therefore the claim follows from \eqref{expansionH}.
\end{proof}

The following theorem is a modification of a result of Bryant.
\begin{theorem}\label{thm_mobius}
Let $ f : \Sigma \rightarrow \R^3$ be a branched Willmore immersion, let $ Y: \Sigma \backslash \mc{S} \rightarrow Q^4$ be its conformal Gauss map and suppose that $ \la \alpha, \alpha \ra ^ { (4,0)} \equiv 0$ vanishes identically on $\Sigma$. Then $ f$ is either umbilic away from $\mc{S}$ or a M\"obius transform of a complete branched minimal immersion $ f' :\Sigma\backslash \mc {S} \rightarrow \R^ 3 \cup \{\infty \}$.
\end{theorem}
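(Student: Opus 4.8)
The plan is to read off the structure of $f$ from the harmonicity of its conformal Gauss map together with the vanishing of the quartic form. Recall from Section A.3 that $f$ being Willmore is equivalent to $Y:\Sigma\backslash\mc S\to Q^4\subset\R^{4,1}$ being a conformal harmonic map, so that $\la Y,Y\ra=1$, $\la Y_z,Y\ra=\la Y_z,Y_z\ra=0$ and $Y_{z\ov z}\in\mathrm{span}_\R(Y)$ away from the branch points. Off the umbilic locus the image $Y(\Sigma\backslash\mc S)$ is a spacelike surface in the de Sitter space $Q^4$, so its normal bundle $N$ inside $T_YQ^4$ is a rank-two bundle of Lorentzian signature $(1,1)$, and in particular it carries two null line sub-bundles. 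Writing $\Phi:=(Y_{zz})^\perp$ for the normal $(2,0)$-part of the second fundamental form of $Y$, the relations $\la Y_z,Y\ra=\la Y_z,Y_z\ra=0$ force the tangential and $Y$-components of $Y_{zz}$ to drop out of $\la Y_{zz},Y_{zz}\ra$, leaving $\la Y_{zz},Y_{zz}\ra=\la\Phi,\Phi\ra$. Thus the hypothesis $\la\alpha,\alpha\ra^{(4,0)}\equiv0$ says precisely that $\Phi$ is a null section of $N$.

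First I would dispose of the degenerate alternative. If $\Phi\equiv0$ on some open set, then $Y$ is totally geodesic there and, since Willmore surfaces are real-analytic away from $\mc S$, in fact $\Phi\equiv0$ on all of $\Sigma\backslash\mc S$; the vanishing of the trace-free second fundamental form of the central sphere congruence then forces $f$ to be umbilic away from $\mc S$, which is the first alternative. In the remaining case $\Phi\not\equiv0$, holomorphicity of the quartic (equivalently analyticity of $Y$) shows that $\Phi\neq0$ off a discrete set, so $\Phi$ singles out one of the two null line sub-bundles $L\subset N$.

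The heart of the argument is to show that $L$ is parallel for the flat connection of $\R^{4,1}$. Here I would write out the structure (Maurer--Cartan) equations for a null frame adapted to $L$ and feed in the isotropy relation $\la\Phi,\Phi\ra=0$: the off-diagonal terms then cancel, so a null generator $\psi$ of $L$ satisfies $d\psi\in\mathrm{span}_\R(\psi)$. Hence $[\psi]$ is a constant null direction, i.e. a fixed point $q\in S^3\cup\{\infty\}\subset\R^{4,1}$; geometrically $q$ is the common point of all the central spheres of $f$, which is exactly the degeneration to a point of Bryant's dual surface. Applying the inversion $I_q$ of Theorem \ref{thm_three} sends every central sphere through $q$ to a plane, so the mean curvature of $f':=I_q\circ f$ vanishes and $f'$ is a branched minimal immersion; the inversion formula \eqref{inversion3} together with $\int_\Sigma|A|^2\,d\mu<\infty$ shows that the punctures created at $f^{-1}(q)$ are complete ends, so that $f'$ is complete and $f$ is a M\"obius transform of $f'$, the second alternative.

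The main obstacle is precisely the step establishing that $L$ is parallel, equivalently that Bryant's dual object degenerates to the single point $q$. This is where the vanishing of the quartic is used in an essential way, rather than mere harmonicity of $Y$, and it is also the place where the branch points intervene: one first derives $d\psi\in\mathrm{span}_\R(\psi)$ only on $\Sigma\backslash(\mc S\cup\{\Phi=0\})$ and must then propagate the resulting locally constant null direction across the points of $\mc S$. For this I would invoke the expansions and decay estimates of Theorem \ref{thm_expansion} together with the bounds on $\varphi$, $\varphi_z$ and $\nabla^k A$ obtained in the proof of Theorem \ref{poles}, which control $Y$ and its derivatives near each $p_i\in\mc S$ well enough that the a priori only locally defined constant vector $q$ extends continuously, hence agrees, across $\mc S$. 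This yields a single globally constant inversion centre and, via \eqref{inversion3}, the completeness of $f'$ at the resulting ends.
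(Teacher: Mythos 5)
Your overall strategy is the paper's own: the vanishing of the quartic plus the structure equations force the second null direction of the normal bundle of $Y$ (Bryant's conformal transform) to be a \emph{constant} null line, i.e.\ a fixed point $q$ through which all central spheres pass, and inversion at $q$ then turns $f$ into a branched minimal immersion (this is exactly \eqref{prop_hol} and the proof of Theorem \ref{thm_mobius}). However, there is a genuine gap at the step where you produce a \emph{single} constant $q$. You claim that ``holomorphicity of the quartic'' shows $\Phi\neq 0$ off a discrete set; this is vacuous, because the quartic is $\la\Phi,\Phi\ra\,dz^4$ and it vanishes identically by hypothesis, so its holomorphicity carries no information about the zeros of $\Phi$. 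In fact zeros of $\Phi$ are not the issue at all: since $\la Y_{zz},X\ra=\tfrac12\varphi$ (the Hopf differential of $f$), $\Phi$ never vanishes on the non-umbilic set, while on the umbilic set of $f$ the map $Y$ fails to be an immersion, so the normal bundle, its null lines, and $\Phi$ are not even defined there. The genuine degenerate locus is therefore the umbilic set, which a priori is only the zero set of the real-analytic function $|\circo A|^2$ and may contain real-analytic \emph{curves}; these can disconnect the good set $\Sigma'$, so your argument only yields one constant $q$ per connected component of $\Sigma'$, and you never glue them together. This is precisely the step the paper covers with the sentence ``as the surface is smooth on umbilic points away from the branch points, the conformal transform is a constant line on $\Sigma\backslash\mc{S}$'': for instance, the conditions $\la Y,Q\ra=0=\la dY,Q\ra$ expressing that all central spheres pass through $q$ are real-analytic on $\Sigma\backslash\mc{S}$, hence propagate from one component of $\Sigma'$ to all of $\Sigma\backslash\mc{S}$. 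By contrast, the propagation across $\mc{S}$ on which you spend the decay estimates of Theorems \ref{thm_expansion} and \ref{poles} is a non-issue: $\mc{S}$ is finite, and removing finitely many points never disconnects a surface, so a locally constant direction on $\Sigma\backslash(\mc{S}\cup\{\text{umbilics}\})$ with the umbilic set handled needs no asymptotic analysis at the branch points (the paper uses none).

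A second, smaller omission: before concluding that $f'=I_q\circ f$ is complete you must rule out $f^{-1}(q)=\emptyset$, since otherwise $f'$ would be a \emph{compact} branched minimal surface and there would be no ends at all. The paper does this explicitly: if $q\notin f(\Sigma)$, then $I_q\circ f(\Sigma)$ is a compact branched minimal surface, which is impossible (cf.\ \eqref{inversion2} or the maximum principle); hence $q\in f(\Sigma)$, the punctures $f^{-1}(q)$ are genuine ends, and completeness of $f'$ follows. Adding this one step, and replacing your discreteness claim by the analytic-continuation argument across the umbilic locus, would make your proof match the paper's.
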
    
\begin{proof}
In the proof of this Theorem we use several results and notations from the appendix. 

Let $ f : \Sigma\rightarrow \R^3$ be a branched Willmore immersion, then the conformal Gauss map $ Y: \Sigma\backslash \mc{S} \rightarrow Q ^ 4 $ is a conformal harmonic map. If $Y \not \equiv $ constant on $\Sigma \backslash \mc{S}$, that is $f$ is not an umbilic surface, then $ Y$ is a conformal minimal immersion on some open dense subset $ \Sigma ' \subset \Sigma$ where $ \Sigma \backslash \Sigma'$ is the union of the set of umbilic points and $\mc{S}$. Now by hypothesis the second fundamental form of $Y$ satisfies $\langle \alpha, \alpha \rangle^{ (4,0)} \equiv 0$ on all of $\Sigma$. Hence on the open dense set $\Sigma'$ the induced metric has signature $ (+,-)$ so it contains two null lines $ [N_1]$ and $ [N_2]$. As mentioned in the appendix, each null line corresponds to a sphere congruence. In particular, one null line $ [N_1]$ corresponds to the Willmore immersion $ f : \Sigma'\rightarrow \R^3$ and the other null line $ [N_2]$ corresponds to Bryant's conformal transform $ \hat f : \Sigma' \rightarrow \R^ 3 $. By \eqref{prop_hol} and the discussion following it (which is a local result and hence applies to $\Sigma'$) the conformal transform is a constant line. As the surface is smooth on umbilic points away from the branch points, the conformal transform is a constant line on $ \Sigma\backslash \mc {S}$.  

Hence $ \hat { f} \in \R ^ 3 \cup \{\infty \}$ is a point and furthermore we claim that $ \hat f \in f (\Sigma)$. Suppose not, then by a suitable M\"obius transform, map the point $ \hat { f} $  to $ \{ \infty \}$. Now since every mean curvature sphere of $f$ must touch $\hat {f}$, this shows that each mean curvature sphere of $f$ is a plane, which implies that the mean curvature sphere has mean curvature zero. Since any surface has the same mean curvature as its mean curvature sphere, it must be a minimal surface. But if $ \hat {f} \not \in f(\Sigma) $, then $ I_{ \hat {f}} \circ f (\Sigma)$ is a compact branched minimal surface, which is a contradiction (see e.g. \eqref{inversion2}). Hence $ \hat { f} \in f (\Sigma)$. Furthermore, the argument above shows that after M\"obius inversion at $ \hat f $, $ I_{ \hat f } \circ f $ is a branched minimal immersion with finite total curvature.
\end{proof}
We note that branch points of $f$ do not necessarily lie in $\hat f$, hence after inversion there may be branch points in the interior of the minimal surface.  
  
To apply the above theorem, we use the Riemann-Roch theorem in order to compute the dimension of the space of meromorphic forms with at worst double poles. 
\begin{theorem}[Riemann-Roch for Line Bundles, \cite{Gunning1966}]\label{thm_RiemannRoch}
Let $\xi$ be a complex line bundle over a Riemann surface $M$ of genus $g$. Then we have
\begin{align*}
 \dim( \Gamma(M,\mathcal{O}( \xi))) - \dim ( \Gamma( M,\mathcal{O}( \kappa \cdot\xi^{-1}))) = c_{1}(\xi) + 1- g 
\end{align*}
where $c_1(\xi)$ is the first Chern class of $\xi$ and $\kappa$ is the canonical line bundle. Let us write $\gamma(M,\mathcal{O}(\xi))=\dim \Gamma ( M, \mc{O}(\xi))$ for the dimension of holomorphic cross sections. Then we have :
\begin{align*}
 \begin{array}{|l|l|}
 \hline
 c_1 ( \xi) < 0 & \gamma(\xi) = 0 \\
 c_1(\xi) =0& \gamma(\xi) = \left \{ 
 				\begin{array}{ll}
				1 &\text{ if $\xi =1$} \\
				0 &\text{ if $\xi \neq 1$} 
				\end{array}
				\right. \\
c_1(\xi) = 2g -2 & \gamma(\xi) = \left\{ 
						\begin{array}{ll}
						g &\text { if $\xi= \kappa$},\\
						g-1 &\text{ if $ \xi \neq \kappa$}\\  
						\end{array}
						\right  .\\
c_1(\xi) > 2g-2 & \gamma(\xi) = c_1 ( \xi) - ( g-1) 	\\
\hline						
 \end{array}
\end{align*}
\end{theorem}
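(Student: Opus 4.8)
The plan is to reinterpret the left-hand side as an Euler characteristic and then to compute that Euler characteristic by a degree induction. Write $h^0(\xi)=\dim \Gamma(M,\mc{O}(\xi))$ and $h^1(\xi)=\dim H^1(M,\mc{O}(\xi))$. The first input I would invoke is Serre duality, which furnishes a canonical isomorphism $H^1(M,\mc{O}(\xi))\cong \Gamma(M,\mc{O}(\kappa\cdot\xi^{-1}))^*$. Consequently
\[
 \dim\Gamma(M,\mc{O}(\xi))-\dim\Gamma(M,\mc{O}(\kappa\cdot\xi^{-1}))=h^0(\xi)-h^1(\xi)=:\chi(\xi),
\]
and the theorem becomes equivalent to the index formula $\chi(\xi)=c_1(\xi)+1-g$, where $c_1(\xi)=\deg\xi$.

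To establish the index formula I would argue by induction on the degree. For the base case I take the trivial bundle $\mathbf{1}$: here $h^0=1$ (the constants), while Serre duality identifies $H^1(M,\mc{O})$ with the dual of the space of holomorphic one-forms, of dimension $g$, so $\chi(\mathbf{1})=1-g=c_1(\mathbf{1})+1-g$. For the inductive step I fix a point $p\in M$ and compare $\mc{O}(\xi)$ with its twist $\mc{O}(\xi+p)$ by the point bundle of $p$. The evaluation-at-$p$ map produces a short exact sequence of sheaves $0\to\mc{O}(\xi)\to\mc{O}(\xi+p)\to\mbb{C}_p\to 0$, with $\mbb{C}_p$ the skyscraper sheaf at $p$. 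Since the Euler characteristic is additive along the associated long exact sequence in cohomology and $\chi(\mbb{C}_p)=1$, we obtain $\chi(\xi+p)=\chi(\xi)+1=c_1(\xi+p)+1-g$. Because every holomorphic line bundle on a compact Riemann surface admits a nonzero meromorphic section, each $\xi$ is isomorphic to some $\mc{O}(D)$ and can therefore be reached from $\mathbf{1}$ by finitely many twists by point bundles $\mc{O}(\pm p)$; the formula then propagates to all $\xi$.

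Granting the index formula, the table of values follows from elementary vanishing statements. If $c_1(\xi)<0$ then $\gamma(\xi)=0$, since a nonzero holomorphic section would have an effective zero divisor of degree $c_1(\xi)<0$, which is impossible. If $c_1(\xi)=0$, a nonzero section is nowhere vanishing and hence trivialises $\xi$, giving $\gamma(\xi)=1$ when $\xi=\mathbf{1}$ and $\gamma(\xi)=0$ otherwise. For $c_1(\xi)>2g-2$ the bundle $\kappa\cdot\xi^{-1}$ has negative degree, so $\Gamma(M,\mc{O}(\kappa\cdot\xi^{-1}))=0$ and the index formula reads $\gamma(\xi)=c_1(\xi)+1-g$. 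Finally, when $c_1(\xi)=2g-2=\deg\kappa$, the bundle $\kappa\cdot\xi^{-1}$ has degree zero; the degree-zero case just treated gives $\gamma(\kappa\cdot\xi^{-1})=1$ if $\xi=\kappa$ and $0$ otherwise, and substituting into the index formula yields $\gamma(\kappa)=g$, respectively $\gamma(\xi)=g-1$.

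The main obstacle is Serre duality itself, or more precisely the finite-dimensionality of $H^1$ together with the explicit duality pairing against holomorphic sections of $\kappa\cdot\xi^{-1}$. This is the genuinely analytic ingredient, which one establishes either through Dolbeault theory with Hodge-theoretic representatives of $\ov\partial$-cohomology classes, or via the residue pairing in the \v{C}ech description after a suitable regularisation. Once this finiteness and duality are secured, the remaining work is the formal bookkeeping of the long exact sequence above and the routine case analysis, so I expect no further difficulty there.
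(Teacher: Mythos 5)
The paper contains no proof of this statement: it is quoted verbatim as a known theorem from Gunning's \emph{Lectures on Riemann Surfaces}, so there is no internal argument to compare against. Your proposal is the standard sheaf-theoretic proof, and it is essentially the route taken in the cited source: Serre duality rewrites the left-hand side as the Euler characteristic $\chi(\xi)=h^0(\xi)-h^1(\xi)$, the index formula $\chi(\xi)=c_1(\xi)+1-g$ is obtained by induction on the degree via the skyscraper sequence $0\to\mc{O}(\xi)\to\mc{O}(\xi+p)\to\C_p\to 0$ (using $\chi(\C_p)=1$), and the table then follows from the elementary vanishing facts you list; each of these steps is correct, including the case analysis $c_1(\xi)=2g-2$ via the degree-zero case applied to $\kappa\cdot\xi^{-1}$. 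The one point you should fold explicitly into your deferred ``analytic ingredients'' is the claim that every holomorphic line bundle is isomorphic to some $\mc{O}(D)$: the existence of a nonzero meromorphic section is not formal bookkeeping but is itself a consequence of the finiteness theorem --- one twists by $np$, notes $\chi(\xi+np)=\chi(\xi)+n\to\infty$ by your own inductive step, and concludes $h^0(\xi+np)>0$ for large $n$ --- so it sits alongside Serre duality and the finite-dimensionality of $H^1$ among the inputs requiring the Dolbeault/Hodge or \v{C}ech machinery. With that attribution made precise, your argument is complete and matches the classical proof that the paper's citation points to.
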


Let $\Sigma$ be a branched Willmore surface with branch points $p_i$ of multiplicity $m_i$. The next theorem shows that there are non nontrivial meromorphic four forms with poles of order at worst $2$ lying on $D=\sum_i m_i p_i$. 
   
\begin{proposition} \label{prop_poles}
Let $f : \mbb{S}^{2} \rightarrow \R^3 $ be a branched Willmore immersion with divisor $D, |D| \leq 3$ then there are no non-trivial meromorphic four forms with at worst double poles lying on $D$.    
\end{proposition}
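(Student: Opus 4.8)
The plan is to convert the statement into a dimension count for holomorphic sections of a line bundle over $\mbb{S}^2\cong \mbb{CP}^1$ and then read off the answer from Riemann--Roch. First I would recall that a meromorphic four-form is exactly a meromorphic section of $\kappa^{\otimes 4}$, where $\kappa$ is the canonical bundle. By Theorem \ref{poles}, every four-form of the relevant type (in particular $\la\alpha,\alpha\ra^{(4,0)}$) is holomorphic away from the branch points and has a pole of order at most $2$ at each point of $\supp D$. Hence the space of all four-forms with at worst double poles lying on $D$ is contained in $\Gamma(\mbb{S}^2,\mc{O}(\xi))$ for the line bundle
\[
\xi:=\kappa^{\otimes 4}\otimes \mc{O}(2D).
\]
Indeed, twisting by $2D$ permits poles of order $2m_i\ge 2$ at each $p_i$, so this section space even contains all four-forms with the prescribed (order $\le 2$) pole behaviour, and it therefore suffices to show that $\Gamma(\mbb{S}^2,\mc{O}(\xi))$ is trivial.

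Next I would compute the first Chern class of $\xi$. Since $\mbb{S}^2$ has genus $g=0$, the canonical bundle satisfies $c_1(\kappa)=2g-2=-2$, whence $c_1(\kappa^{\otimes 4})=-8$. Twisting by $\mc{O}(2D)$ raises the degree by $2|D|$, so
\[
c_1(\xi)=-8+2|D|.
\]
Here the hypothesis $|D|\le 3$ enters decisively: it forces $c_1(\xi)\le -8+6=-2<0$.

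Finally I would apply Riemann--Roch in the form of Theorem \ref{thm_RiemannRoch}. Because $c_1(\xi)<0$, the first line of the dimension table gives $\gamma(\xi)=\dim\Gamma(\mbb{S}^2,\mc{O}(\xi))=0$. Thus $\xi$ admits no nontrivial holomorphic section, and a fortiori there are no nontrivial meromorphic four-forms with at worst double poles lying on $D$, which is the assertion.

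I do not expect a genuine analytic obstacle in this proposition; its entire content sits in the bookkeeping of the first two paragraphs, and the only point requiring care is the sharpness of the numerology. The bound $|D|\le 3$ is precisely the threshold keeping $c_1(\xi)$ negative: already $|D|=4$ would yield $c_1(\xi)=0$, in which case Riemann--Roch no longer forces the section space to vanish. Equivalently, a nonzero meromorphic section of a degree $-8$ bundle on $\mbb{CP}^1$ must have total pole order at least $8$, whereas double poles spread over the at most three points of $\supp D$ account for at most $6$; it is exactly this deficit that the argument exploits.
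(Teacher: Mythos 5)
Your proof is correct and follows essentially the same route as the paper's: both identify forms with at worst double poles on $D$ with holomorphic sections of $\xi=\kappa^4\otimes \mc{O}(2D)$ over $\mbb{CP}^1$ and kill the section space by Riemann--Roch (Theorem \ref{thm_RiemannRoch}) using $c_1(\xi)=2|D|-8\le -2$. If anything, your appeal to the $c_1(\xi)<0$ row of the table is the cleaner step: the paper instead quotes $\gamma(\xi)=c_1(\xi)-(g-1)$, a formula valid only for $c_1(\xi)>2g-2$, and reads off a negative value, whereas your case applies directly.
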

\begin{proof}
Let $D$ denote the divisor, then being a meromorphic four form with at worst double poles on $D$ is equivalent to being a holomorphic four form on the bundle $ \xi = \kappa^4 \otimes 2 D$, where $ \kappa$ is the anti-canonical line bundle which in particular has $ c_1( \kappa) = 2( g-1)$.   

By applying the Riemann-Roch theorem to the bundle $ \xi = \kappa^4\otimes 2D $ where $ D = \sum _{ i = 1} ^ d m _ i z _i $ is a divisor, then as $ \gamma ( \xi) = c_1 ( \xi) - (g-1)$ we get that 
\begin{align*}
\gamma ( \kappa ^ 4 \otimes 2D ) &=  c_1 ( \kappa ^ 4 \otimes 2D ) - (g-1) \\
& = 4c_1 ( \kappa) + c_1 (2D) - (g-1)\\
& =  2| D| + 7 ( g-1).
\end{align*} 
Hence we see that as $g=0$ and that $|D| \leq 3$ shows that $ \gamma ( \kappa^4 \otimes 2 D ) = 0$ and that the only such meromorphic four form is the trivial form.  
\end{proof}

Furthermore we state the following fact, 
\begin{lemma} \label{lem_odd}
Let $ f:\mbb{S}^2 \to \R^3 $ be a branched conformal immersion with exactly one finite area branch point $p\in \mbb{S}^2$. Then the multiplicity $m(p)$ can not be even.     
\end{lemma}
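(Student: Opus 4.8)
The plan is to combine the generalized Gauss--Bonnet formula \eqref{gauss-bonnet} with the classical fact that the total curvature of a closed branched immersion of $\mbb{S}^2$ into $\R^3$ is an integer multiple of $4\pi$, computed via the degree of the Gauss map. Reconciling the two resulting expressions for $\int K\,d\mu$ forces a parity condition on $m(p)$.

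First I would evaluate \eqref{gauss-bonnet} in the present situation. Since $\Sigma=\mbb{S}^2$ has $\chi_\Sigma=2$, since there are no complete ends (so the term $\sum_i(k(a_i)+1)$ is absent), and since $p$ is the only point of multiplicity $m(p)=:m$ while every other point has multiplicity one, the sum $\sum_{q\in\Sigma\setminus E}(m(q)-1)$ collapses to $m-1$. This gives
\begin{align*}
\int_{\mbb{S}^2}K\,d\mu=2\pi\big(2+(m-1)\big)=2\pi(m+1).
\end{align*}
Next I would show that the oriented Gauss map $N:\mbb{S}^2\setminus\{p\}\to\mbb{S}^2$ extends continuously across $p$. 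For a branched conformal immersion the asymptotic analysis of M\"uller--\v{S}ver\'ak and Kuwert--Li recalled in Section~\ref{sec_preliminaries} provides a conformal parametrization on $D_1\setminus\{0\}$ with a well-defined tangent plane at the branch point; concretely, $\partial_1 f-\imath\partial_2 f = a\,z^{m-1}+o(|z|^{m-1})$ for some $a\in\C^3\setminus\{0\}$ with $\la a,a\ra=0$, and writing $a=e_1+\imath e_2$ with $e_1,e_2$ orthogonal of equal length, the leading terms of $\partial_1 f$ and $\partial_2 f$ both lie in the plane $\mathrm{span}(e_1,e_2)$ for every argument of $z$. Hence $N(z)\to \pm\,e_1\times e_2/|e_1\times e_2|$ as $z\to 0$ independently of $\arg z$, so $N$ has a continuous extension to all of $\mbb{S}^2$ with a single limiting normal at $p$.

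Finally, since $|K|\le\frac12|A|^2$ and $\int_{\mbb{S}^2}|A|^2\,d\mu<\infty$, the form $K\,d\mu=N^\ast\omega$ (with $\omega$ the area form of $\mbb{S}^2$) is integrable, so the contribution of a small disk $D_\varrho$ about $p$ satisfies $|\int_{D_\varrho}K\,d\mu|\le\int_{D_\varrho}|K|\,d\mu\to0$ as $\varrho\to0$. Combined with the continuity of $N$, this yields
\begin{align*}
\int_{\mbb{S}^2}K\,d\mu=4\pi\deg(N),\qquad \deg(N)\in\Z.
\end{align*}
Comparing with the Gauss--Bonnet computation gives $2\pi(m+1)=4\pi\deg(N)$, hence $m+1=2\deg(N)$ is even and $m$ is odd, which is the assertion.

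I expect the main obstacle to be the rigorous justification of the degree formula in the presence of the branch point: establishing the continuous extension of $N$ from the local structure of $f$ near $p$, and then identifying $\int K\,d\mu$ with $4\pi\deg(N)$. Once the extension is in hand the latter is a standard approximation argument --- one homotopes $N$ to a smooth map agreeing with it outside $D_\varrho$ and controls the error by $\int_{D_\varrho}|K|\,d\mu$ --- but the smoothing step and the verification that no hidden degree is concentrated at $p$ require care, and this is where most of the work lies.
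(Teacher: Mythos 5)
Your Gauss--Bonnet computation and the parity mechanism are exactly right, and they mirror the paper's own argument; the problem lies in the justification of the degree formula. Lemma \ref{lem_odd} is stated for branched conformal immersions in the sense of Definition \ref{defn_branched}, i.e.\ maps that are merely $W^{2,2}_{conf}$ with square-integrable second fundamental form away from the branch point. For such immersions the expansion $\partial_1 f-\imath\partial_2 f=a z^{m-1}+o(|z|^{m-1})$ that you invoke is \emph{not} available: in this paper it appears as \eqref{eqn_loc_expansion} inside Theorem \ref{thm_expansion}, whose hypotheses include that $f$ is a \emph{smooth Willmore} immersion near the branch point. For a general $W^{2,2}$ conformal immersion the unit normal is only a $W^{1,2}$ map, and pointwise continuity can genuinely fail, even at multiplicity-one points: the graph of $h(x)=x_1\sin\big(\log\log(1/|x|)\big)$ near the origin is a $W^{2,2}$ immersion with finite area and $\int|A|^2\,d\mu<\infty$ (the Hessian is of order $|x|^{-1}(\log(1/|x|))^{-1}\in L^2$), yet its normal has no limit at $0$. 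So the step ``$N$ extends continuously across $p$'' is false in the stated generality, and with it your smoothing argument as written, since it uses that $N(D_\varrho)$ lies in a small disk about $N(p)$.

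The fix is to drop pointwise continuity altogether and use that $N\in W^{1,2}(\mbb{S}^2,\mbb{S}^2)$ --- indeed $\int|\nabla N|^2\,dx\le\int|A|^2\,d\mu<\infty$ and a point has zero capacity --- together with the fact that $\frac{1}{4\pi}\int N^\ast\omega$ is an integer for $W^{1,2}$ maps between closed oriented surfaces (Schoen--Uhlenbeck density plus continuity of the Jacobian integral under strong $W^{1,2}$ convergence, equivalently the VMO degree). That quantization statement is essentially the theorem of White \cite{White1987}, and this is precisely how the paper avoids the issue: it inverts at $f(p)$ so that the branch point becomes an end of a \emph{complete} surface of finite total curvature, computes $\int_{\tilde\Sigma}\tilde K\,d\mu_{\tilde g}=4\pi l-2\pi k(p)$ from \eqref{gauss-bonnet}, and then quotes White's theorem ($\int\tilde K\in 4\pi\Z$) as a black box to get the parity contradiction. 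Your route works directly on the closed branched sphere, which is cleaner (no inversion), but then White's theorem cannot be cited verbatim and the $4\pi\Z$ quantization for closed branched $W^{2,2}$ surfaces must be argued via the $W^{1,2}$ degree theory rather than via continuity. (In all of the paper's applications the immersion is branched Willmore, so there your continuity claim would in fact hold by Theorem \ref{thm_expansion}; but as a proof of the lemma as stated, the continuity step is a genuine gap.)
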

\begin{proof} 
Suppose that $p\in \mbb{S}^2$ is the only finite area branch point and that $m(p)$ is even.  
Consider the inversion at the point $f ( p) $, $ I_{f ( p)} \circ f $. This then gives us a complete branched conformal immersion $\tilde{f}:\tilde{\Sigma}=\tilde{f}(\mbb{S}^2\backslash f^{-1}(f(p)))\to \R^3$ with $\#\{ f ^{-1} ( f(p))\} $ ends of which exactly one is an end of odd branch order $ k(p)=m(p)-1$ and all the other ends have branch order zero. Then we may compute by the Gauss-Bonnet formula \eqref{gauss-bonnet} applied to the complete surface $\tilde{\Sigma}$ with finite total curvature     
\begin{align*}
\int_{\tilde{\Sigma}} \tilde{K} d \mu _{\tilde{g}}  &= 2 \pi ( \chi (\tilde{ \Sigma}) - \sum_{ q \in f^{-1}( f ( p )) }   (k(q )+1))  \\
& = 2 \pi( 2 - 2\# \{ q \in f ^{-1}( f ( p ) ), q \neq p \} -1 - (k(p)+1)) \\
& = 4\pi l-2\pi k(p)
\end{align*}
for some $l\in \Z$. However by a theorem of White \cite{White1987} such a surface must satisfy $ \int_{\tilde{\Sigma} } \tilde{K} d \mu_{ \tilde{g}  } = 4 \pi m$ for some $m\in \Z$ and this is a contradiction.
 \end{proof}

An application of Proposition \ref{prop_poles} and Theorem \ref{thm_mobius} immediately gives us the following theorem.
\begin{thm} \label{thm_minimal}
Let $f: \mbb{S}^2 \to \R ^ 3 $ be a branched Willmore immersion with $|D| \leq 3$, then 
$f$ is either umbilic or it is the M\"obius transform of a branched complete minimal surface. In particular, this implies that the Willmore energy is quantised as $\mc {W} ( f) = 4 \pi k$.  
  \end{thm}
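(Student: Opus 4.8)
The plan is to feed the three structural results of this section into one another; the genuine analytic content lies upstream, so at this stage the argument is a synthesis.

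First I would observe that, by Theorem \ref{poles}, the quartic form $\la \alpha, \alpha \ra^{(4,0)}$ attached to the conformal Gauss map $Y$ is meromorphic on $\mbb{S}^2$, holomorphic away from the branch set $\mc{S}$ (as recorded in the appendix), and has poles of order at most $2$ at the points of $\mc{S}$. Since every branch point has multiplicity $m_i\ge 1$, a double pole at $p_i$ is in particular a pole of order at most $2m_i$, so $\la \alpha, \alpha \ra^{(4,0)}$ is a global holomorphic section of the line bundle $\kappa^4\otimes 2D$ with $D=\sum_i m_i p_i$. Proposition \ref{prop_poles} then applies directly: for $g=0$ and $|D|\le 3$ the Riemann--Roch computation gives $\gamma(\kappa^4\otimes 2D)=0$, so the only such section is the zero section. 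Hence $\la \alpha, \alpha \ra^{(4,0)}\equiv 0$ on all of $\mbb{S}^2$.

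With this vanishing established I would invoke Theorem \ref{thm_mobius}, whose hypothesis is precisely $\la \alpha, \alpha \ra^{(4,0)}\equiv 0$. It delivers the stated dichotomy: either $f$ is umbilic away from $\mc{S}$, or $f=I_{x_0}\circ f'$ for a branched complete minimal immersion $f'$ of finite total curvature.

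It then remains to read off the quantisation $\mc{W}(f)=4\pi k$. In the umbilic case $f$ parametrises a (possibly branched) round sphere, for which $\mc{W}(f)\in 4\pi\N$ directly. In the minimal case I would apply the inversion formula \eqref{inversion2} with $f'$ in the role of $f$ and $f=I_{x_0}\circ f'$ in the role of $\tilde f$: since $f'$ is minimal we have $H\equiv 0$ and hence $\mc{W}(f')=0$, so \eqref{inversion2} gives $\mc{W}(f)=4\pi\big(\sum_i (k(a_i)+1)-\sum_{p\in (f')^{-1}(x_0)} m(p)\big)$, an integer multiple of $4\pi$ (necessarily at least $4\pi$ by the Willmore inequality). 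The only point that needs care here is verifying that the end and branch-point data of $f'$ meet the hypotheses of Theorem \ref{thm_three}, so that \eqref{inversion2} is legitimate, and that $x_0$ is chosen compatibly with the ends produced by the inversion; this is the main obstacle, and it is essentially bookkeeping, the substantive analysis having already been discharged in Theorem \ref{poles} and Proposition \ref{prop_poles}.
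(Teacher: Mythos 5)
Your proposal is correct, and its main pipeline is exactly the paper's: Theorem \ref{poles} gives the pole bound, Proposition \ref{prop_poles} (Riemann--Roch on $\kappa^4\otimes 2D$ with $g=0$, $|D|\le 3$) forces $\la \alpha,\alpha\ra^{(4,0)}\equiv 0$, Theorem \ref{thm_mobius} yields the dichotomy, and \eqref{inversion2} applied to the minimal immersion $f'$ (with $\mc{W}(f')=0$) gives the quantisation in the non-umbilic case. Where you diverge is the umbilic branch, which is in fact where the paper spends most of its effort: rather than accepting a possibly branched cover of the round sphere, the paper shows that in the umbilic case there are \emph{no} branch points at all --- multiplicity-one branch points are removable (invert at a generic point, use smoothness of interior multiplicity-one branch points of minimal surfaces, invert back), $D=2p$ is excluded by the parity Lemma \ref{lem_odd}, and $D=3p$ is excluded by the Gauss--Bonnet formula \eqref{gauss-bonnet}, since a flat complete surface with one end of branch order two would need $\int \tilde K\, d\mu_{\tilde g}=-4\pi\neq 0$. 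Your shortcut (a degree-$d$ branched cover of a round sphere of radius $r$ has area $4\pi r^2 d$, hence $\mc{W}(f)=4\pi d$) is legitimate and suffices for the statement as written, though "directly" hides two small steps you should record: the image cannot lie in a plane (Gauss--Bonnet, or openness plus compactness), and the image being open and closed in the round sphere makes $f$ a genuine branched cover so that the area formula applies. The trade-off is that the paper's longer route proves the stronger conclusion (umbilic $\Rightarrow$ unbranched round sphere, $\mc{W}(f)=4\pi$), and it is this stronger form that is reused as Case 1 in the proof of Theorem \ref{thm_classify}; your version would not feed into that later argument without redoing the paper's branch-point exclusions.
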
 
  \begin{proof} 
We have shown that if the surface is not umbilic away from the branch points then it must be the M\"obius transform of a complete branched minimal surface. Hence we only have to consider the case of an umbilic surface away from the branch points. Away from the branch points the surface is then a round sphere. 

First we claim that all multiplicity one branch points $p\in \mbb{S}^2$ are smooth. This follows from the fact that we can invert the surface at an arbitrary point $f(q)\not= f(p)$ and as a result we obtain a complete, branched minimal surface. The multiplicity one branch point $p$ then corresponds to an interior branch point of multiplicity one of the minimal surface. But these branch points are removable and after composing with the same inversion again this shows the claim.

Hence we only have to consider the cases $D=2p$ and $D=3p$ for some $p\in \mbb{S}^2$. The case $D=2p$ is ruled out by Lemma \ref{lem_odd}. In order to rule out the case $D=3p$ we invert the surface at $ f( p)$ and as a result we obtain a flat, complete surface $\tilde{\Sigma}$ with an end of branch order two and no interior branch points. This contradicts \eqref{gauss-bonnet} since
\[
0=\int_{\tilde{\Sigma}} \tilde{K} d\mu_{\tilde{g}}=2\pi(1-3)=-4\pi.
\]
As we only consider connected surfaces, the umbilic sphere is one copy of a round sphere. 

The fact that the Willmore energy is quantised now follows from \eqref{inversion2}.
\end{proof}  
 
Using the classification results for minimal surfaces mentioned in section 2 and the Weierstrass-Enneper representation of complete, branched minimal surfaces (see Appendix B), we can prove the following theorem.
 \begin{thm3}
Let $f: \mbb{S}^2 \rightarrow \R ^ 3 $ be a branched Willmore immersion with $|D| \leq 3$ and $ \mc {W}(f) < 16 \pi$. Then one of the following cases occurs ($p_i\in \mbb{S}^2$, $1\le i \le 3$): 
\begin{enumerate}
\item $D = \emptyset$ and $f$ is an isometric immersion of the round sphere with $ \mc{W} (f) = 4 \pi$.

\item $D =p_1+ p_2$ and $ f$ is the M\"obius transform of an embedding of a catenoid with $\mc{W} ( f) = 8\pi$.

\item $D = 3p_1$ and $f$ is the M\"obius transform of an immersion of Enneper's minimal surface with $\mc{W}(f) = 12\pi.$

\item $ D =p_1+ m_2p_2+ m_3p _3 , m _i \in\{0,1\} $ and $ f$ is the M\"obius transform of a trinoid with $ \mc{W}(f) = 12 \pi$.     

\end{enumerate}

\end{thm3}
\begin{proof}
Theorem \ref{thm_mobius} and Theorem \ref{thm_main} show that $f$ is either a round sphere or the M\"obius inversion of a branched, complete minimal surface where the inversion point is the conformal transform, $ \hat f$. We will denote by $ \hat x  $ the set of points whose image is in $ \hat f$ that is $ \hat {x} = f^{-1} ( \hat f )$, and we let $\hat {\Sigma} = \Sigma \backslash \mc {S}$. The image of a branch point may not lie in the conformal transform, hence the resulting complete minimal surface may have interior branch points but of course if the conformal transform contains a branch point then it must lie in the divisor. 

\underline{Case 1}: $\mc {W}(f) = 4\pi$.

In this case we have an umbilic sphere and from \eqref{inversion2} we conclude that each branch point has at most multiplicity one. As we already noted in the proof of Theorem \ref{thm_main}, multiplicity one branch points are smooth. Therefore $D=\emptyset$.

\underline{Case 2}: $\mc {W}(f) = 8\pi$. 

After inverting the surface at $\hat{f}$ we get from \eqref{inversion2} that $\hat{x}=2p_1$ or $\hat{x}=p_1+p_2$. Moreover, inverting the surface at the image of every possible branch point, we conclude again from \eqref{inversion2} that any branch point has at most multiplicity two. Note that $D\not= \emptyset$ by the classification result of Bryant \cite{Bryant1984}. Moreover, arguing as in the proof of Theorem \ref{thm_main}, we get that multiplicity one branch points $q\notin \hat{x}$ correspond after inversion at $\hat{f}$, to interior branch points of multiplicity one of a minimal immersion and hence they are smooth. 
\begin{enumerate} 
\item $ \hat {x} = 2 p_1$
\newline
By the above discussion we must have $ D = 2 p_1 $ (note that $|D|\le 3$) but this is ruled out by Lemma \ref{lem_odd}.

\item $ \hat { x} = p_1+ p_2$. 
\newline
We can then have $D = p_1 + p _2 $ or $ D = 2q$ for some $q\in \mbb{S}^2\backslash \{p_1,p_2\}$ (otherwise we get a contradiction to \eqref{inversion2}). 
\newline
If $D = p _ 1 + p _2 $, then by applying the Gauss-Bonnet formula to $\tilde{f}=I_{\hat{f}} \circ f$, we conclude from \eqref{gauss-bonnet} 
\begin{align*}
\int_{ \tilde{\Sigma}} \tilde{K} d \mu_{\tilde{g} } &= 2\pi \bigg( \chi ( \tilde{\Sigma}) - \sum_{p_i \in \hat x}  (k ( p_ i )+1) \bigg) \\
   & = - 4 \pi.
\end{align*} 
Hence $ \int_{\tilde{\Sigma}} |\tilde{A}|^2 d\mu_{\tilde{g}}=-2\int_{\tilde{\Sigma}}\tilde{K}d\mu_{\tilde{g}}= 8 \pi$ and by Theorem \ref{thm_osserman} this implies that $\tilde{f}$ is the embedding of a catenoid.
\newline
If $ D = 2 q$ then inverting the surface at $ f( q)$ gives a minimal surface by \eqref{inversion2} since $\mc{W} (f) = 8 \pi$. As $  p_1, p_2 $ then correspond to interior branch points of multiplicity one they are smooth, and this yields a contradiction by Lemma \ref{lem_odd}.
\end{enumerate}

\underline{Case 3} : $ \mc {W} ( f) = 12 \pi$.

In this case $\hat{x}$ has multiplicity three and any branch point has maximal multiplicity three by the same arguments as in Case $2$. The conformal transform then may have the form $ \hat x = 3p_1, \hat x = 2 p _1 + p_2, \hat x = p_1+p_2+p_3$. As before we can not have branch points $q\notin \hat{x}$ of multiplicity one. 
\begin{enumerate} 
\item $\hat x= 3 p_1$ 
\newline
Then $D= 3 p_1$ and after inverting the surface at $\hat{f}$ and noting that the resulting surface has no interior branch points, we get from \eqref{gauss-bonnet} as above
\begin{align*}
\int_{ \tilde{\Sigma}} \tilde{K} d \mu _{ \tilde{g} } &= 2 \pi ( \chi( \tilde{\Sigma}) - (k(p_1)+1)) \\
 &= - 4 \pi.     
\end{align*} 
Hence $ \int_{\tilde{\Sigma}} |\tilde{A}|^2 d\mu_{\tilde{g}}= 8 \pi$ and by Theorem \ref{thm_osserman} $ I _{ \hat f} \circ f $ is an immersion of Enneper's minimal surface.  

\item $ \hat x = 2 p_1 + p_2 $ 
\newline
In this case $ D = 2 p_1 + mq$ for $m = 0,1$ and some $q\in \mbb{S}^2\backslash \{p_1\}$. If $ m =1 $ then $q=p_2$, as we can not have an interior branch point $q\notin \hat{x}$ of multiplicity one. Hence $D = \hat x $ and therefore the complete minimal immersion $\tilde{f}=I_{\hat{f}} \circ f$  is unbranched. \eqref{gauss-bonnet} then yields 
\begin{align*}
\int_{ \tilde{\Sigma} }\tilde{K}  d \mu_{\tilde{g}} &= 2 \pi ( \chi ( \tilde{\Sigma}) - (k(p_1)+1) - (k ( p_2)+1) )\\
& = 2\pi ( 2 -2 -2 - 1 ) = - 6\pi,
\end{align*}
contradicting the fact that $\int_{ \tilde{\Sigma} }\tilde{K}  d \mu_{\tilde{g}}=4\pi k$, for some $k\in \Z$, by a theorem of White \cite{White1987}.
\newline
The case $ D = 2 p_1$ (i.e. $m=0$) is again ruled out by Lemma \ref{lem_odd}.

\item $ \hat x = p_1 + p_ 2 + p_3 $. 
\begin{enumerate} 
\item $ D = 3q $ for some $q\in \mbb{S}^2\backslash \{p_1,p_2,p_3\}$. Using \eqref{inversion2} it follows that after inverting the surface at $ f ( q )$, $ I _{ f (q)} \circ f $ is a minimal immersion with an end of branch order $2$. Using again the fact that interior branch points of a minimal surface of multiplicity one are smooth, we can then repeat the argument of Case $2$, (1), in order to show that $I _{ f (q)} \circ f $ must be an immersion of Enneper`s minimal surface. This fact is already sufficient for our purposes. 

Using the Weierstrass-Enneper representation (see Appendix B) we can actually show that this case can`t occur at all. Indeed, the above reasoning shows that the conformal transform $ \hat x $ consists of three interior multiplicity one branch points, hence they must be smooth. In particular this shows that the $p_i$ correspond to planar ends for $\tilde{f}=I_{\hat f} \circ f$ and $\tilde{f}$ has an interior branch point of multiplicity three. We then have that 
\begin{align*}
\partial ( I _{ \hat f } \circ f ) = \left ( 
\begin{array}{c} 
\phi_1 \\
\phi_2\\
\phi_3
\end{array}
\right)
\end{align*}    
where $ \phi_i$ are linearly independent meromorphic $1$ forms which have poles of order $ 2$ at each end and zero residue at each end. Furthermore, the $ \phi_i$ have a zero of order two on $D$. This is equivalent to saying that $\phi_i$ are meromorphic $1$-forms where $ \tilde H^0 = \{ \gamma \in H^{0} (K _ \Sigma \otimes [ \hat x] \otimes [ - 2 q ] ) \mid res_{p_i} \phi_j= 0\}  $. By the Riemann-Roch theorem ,
\begin{align*}
| \tilde H^0 | = 6 -1 -2 -2 = 1.
\end{align*}
Hence there is no such minimal surface.

\item $D = 2 q_1+ m q_2$ for $m = 0,1$ and some $q_1\in \mbb{S}^2 \backslash \{p_1,p_2,p_3\}$, $q_2\in \mbb{S}^2$. Note that the case $m=0$ is ruled out by Lemma \ref{lem_odd}. If $m =1$ we can assume $q_2=p_1$ as there are no interior branch points of multiplicity one. Now inverting the surface at $\hat{f} $ gives a surface with three ends of branch order zero and one interior branch point of multiplicity two. By \eqref{gauss-bonnet} this yields
\[
 \int_{\tilde{\Sigma}} \tilde{K} d\mu_{\tilde{g}}=2\pi(-1-3+1)=-6\pi
\]
which, as we have seen, is impossible.

\item $ D = q_1+ m_2q_2 + m_3q _3 $ for $m_2,m_3=0,1$. As there are no interior branch points of multiplicity one for the surface inverted at $\hat{f}$, the only possibilities are 
\begin{align*}
D = p_1, D= p_1+ p_2\ \ \ \text{or} \ \ D = p_1+ p_2 + p _ 3. 
\end{align*}    
Using \eqref{gauss-bonnet} we calculate
\[
 \int_{\tilde{\Sigma}} \tilde{K}d\mu_{\tilde{g}}=2\pi(2-3-3)=-8\pi
\]
and hence $\int_{\tilde{\Sigma}} |\tilde{A}|^2 d\mu_{\tilde{g}}=16\pi$. These surfaces are trinoids by the classification result of Lopez, see Theorem \ref{thm_lopez}.
\end{enumerate}

\end{enumerate}
\end{proof}

\section{Analysis of the Willmore Flow} \label{sec_analysis}

The Willmore flow is the flow in the direction of the negative $L^2$ gradient of the Willmore functional. We will consider the flow in the smooth setting and analyse the singular behaviour at the first singular time by taking blowups, blowdowns or translations. We will see that Willmore surfaces with finite isolated singularities naturally model singularities of the flow. This contrasts with the mean curvature flow, where the natural models of the singularities of the flow are not minimal surfaces but rather self-shrinkers. This difference can be attributed to the fact that the energy of the Willmore flow is scale invariant and analytically behaves like the harmonic map heat flow from a Riemann surface.  

For geometric flows the short-time existence theory for smooth initial data is standard. The key fact is that although the equation is not strictly parabolic, the zeroes of the symbol of the differential operator are due only to the diffeomorphism invariance of the equation. By breaking this diffeomorphism invariance, the existence theory then reduces to standard parabolic theory.
\begin{theorem}[\cite{Kuwert2011}]
Let $ f _0 : \Sigma\rightarrow \R ^ n$ be a  closed, smoothly immersed surface. Then the initial value problem 
\begin{align*}
 \partial _t f &= - W(f)   \quad \text{ on $\Sigma \times (0,T) $},\\ 
f \mid_{t=0} & = f _ 0
\end{align*}
has a unique smooth solution on a maximal interval $ [0,T)$ where $ 0 < T \leq \infty $.
\end{theorem}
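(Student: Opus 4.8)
The plan is to reduce the flow, which is only \emph{degenerate} parabolic as written, to a genuinely parabolic fourth-order system by breaking the diffeomorphism invariance, and then to invoke standard parabolic theory for the reduced system.

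First I would examine the linearization of the Willmore operator $W(f)=\triangle H + 2H(H^2-K)$ at a given immersion. Since $W$ depends only on the image of $f$ and is equivariant under reparametrizations $f\mapsto f\circ\varphi$, any tangential variation $\dot f = df(\xi)$ (with $\xi$ a vector field on $\Sigma$) produces to leading order merely a reparametrization, and therefore lies in the kernel of the principal symbol of the linearized operator. Decomposing a general variation as $\dot f=\phi+df(\xi)$, where $\phi$ is the component in the normal bundle, the computation shows that the principal part of $-DW$ acting on $\phi$ is $-\tfrac12\Delta_g^2\phi$, with $\Delta_g$ the Laplace--Beltrami operator of the induced metric acting componentwise on normal sections; its symbol is $-\tfrac12|\zeta|^4$, so this part is strictly elliptic (of the correct sign for forward parabolicity). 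Thus the only degeneracy of the flow is the finite-dimensional space of tangential directions, exactly as anticipated from the geometric reparametrization invariance.

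Next I would fix the gauge. Two equivalent routes are available: a DeTurck-type trick, in which one adds to $-W(f)$ a tangential term built from a fixed background connection so as to cancel the degenerate directions; or the representation of nearby immersions as normal graphs over the fixed initial immersion $f_0$, so that only a normal section $\rho$ evolves. In either formulation the evolution equation for the unknown becomes $\partial_t\rho=-\tfrac12\Delta_{g(\rho)}^2\rho+\mathcal{N}(\rho,\nabla\rho,\nabla^2\rho,\nabla^3\rho)$, a strictly parabolic quasilinear fourth-order system whose principal part is the bi-Laplacian of the $\rho$-dependent metric and whose lower-order nonlinearity $\mathcal{N}$ is smooth in its arguments. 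For such a system short-time existence and uniqueness are classical: linearizing about the initial data gives a linear strictly parabolic problem solvable in the parabolic H\"older (or $L^p$ maximal-regularity) setting, and a contraction-mapping argument on a small time interval upgrades this to a unique solution of the quasilinear problem, which is then smooth by parabolic bootstrapping. Undoing the gauge fixing---solving the accompanying flow of diffeomorphisms in the DeTurck formulation, or simply noting that the normal graph \emph{is} the geometric evolution---recovers a solution of the Willmore flow, and uniqueness follows from uniqueness of the gauge-fixed problem together with uniqueness of the reparametrization. Finally, the maximal interval $[0,T)$ is produced by the standard continuation argument: the set of times up to which a smooth solution exists is relatively open, so a maximal half-open interval exists with $T=\infty$ unless the solution ceases to remain smoothly immersed as $t\nearrow T$.

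The main obstacle is precisely the first step---identifying and removing the degeneracy. The Willmore operator is not strictly parabolic in the naive sense, and the entire argument hinges on verifying that its principal symbol degenerates only along the tangential (reparametrization) directions, so that after gauge fixing a single strictly parabolic fourth-order system governs the geometry. Once this structural fact is established, the remaining analysis is routine parabolic PDE theory.
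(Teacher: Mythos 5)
Your proposal is correct and takes essentially the same approach as the paper, which in fact offers no proof of its own: it cites Kuwert--Sch\"atzle \cite{Kuwert2011} and prefaces the statement only with the remark that the zeroes of the symbol are due solely to diffeomorphism invariance, so that breaking this invariance reduces the existence theory to standard parabolic theory. Your argument---identifying the degeneracy as purely tangential, gauge fixing via normal graphs or a DeTurck-type term to obtain a quasilinear system with principal part $-\tfrac12\Delta_g^2$, and then applying classical parabolic existence, uniqueness, bootstrapping, and continuation---is precisely the fleshed-out version of that sketch.
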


Next we summarize the blow up behaviour of the Willmore flow. Details can be found in section 3.3 of \cite{Kuwert2011}.

Let $0<T\le \infty$ be the maximal time of existence for the Willmore flow of a closed surface with smooth initial data $f_0$ and assume that the flow does not converge smoothly. Then there exist sequences $t_j \nearrow T$, $r_j \in \R^+$ and $x_j\in \R^n$ such that the rescaled flows  
\[
 f_j:\Sigma \times \left[ -\frac{ t_j}{ r_j^4} , \frac{ T - t_j} { r _j ^ 4 } \right) \rightarrow \R^n, f_j( p, \tau) = \frac{ 1}{ r_j} ( f ( p, t_j++ r_j^4 \tau) - x_j)
\]
converge after appropriate reparametrisations locally smoothly to a smooth, non-trivial, properly immersed Willmore surface $\hat{f}_0:\hat{\Sigma}_0 \to \R^n$. One has to distinguish between three different cases: 
\begin{enumerate}
 \item[1)] $r_j\searrow 0$. In this case $\hat{f}_0$ is called a blow up.
 \item[2)] $r_j \to \infty$. Then $\hat{f}_0$ is called a blow down.
 \item[3)] $r_j \to 1$. In this situation $\hat{f}_0$ is called a limit under translations.
\end{enumerate}
The cases $2)$ and $3)$ can only occur for $T=\infty$.

It was shown by Kuwert and Sch\"atzle \cite{Kuwert2001}, Lemma 4.3, that if $\hat{\Sigma}_0$ contains a compact component, then $\hat{\Sigma}_0$ is diffeomorphic to $\Sigma$. Moreover, it was proved in \cite{chill08}, that blow ups and blow downs are never compact.

The following lemma is a slight improvement of Lemma 5.1 in \cite{Kuwert2004} and it gives refined convergence properties of the blow up procedures. 

\begin{lemma}\label{thm_convergence} 
Let $ \Sigma_ j$ be a sequence of smooth and closed surfaces immersed in $\R^3$ which satisfy 
\begin{align}
\label{eqn_willmore_bound}
\mc{W}( \Sigma_j) &\leq C \\
\label{eqn_total_bound}
 \int_{ \Sigma _j } |A_{\Sigma_j} | ^ 2 d \mu_{\Sigma_j} &\leq C \ \ \ \text{and} \\
\label{eqn_local_convergence}
\Sigma_ j &\rightarrow \Sigma \quad \text{ smoothly in compact subsets of $ \R^3 $ } ,
\end{align}
where $ \Sigma$ is a smooth, non-compact Willmore surface. Then for any $ x _0 \not \in \Sigma$ and the inversion $ I_{x_0} (x) = \frac{ x -x _0} { | x-x_0|^{2} }$ we have that $\ov \Sigma \cup \{x_0\}= I_{x_0}(\Sigma) \cup \{x_0\}$ is a closed, branched surface and 
\begin{align}
 \ov{\Sigma} &= I_{x_0} ( \Sigma )  \text{ is a smooth Willmore surface}, \\
\label{eqn_energy_bound} \mc { W } ( \Sigma) + 4 \pi \theta^ 2 ( \mu , \infty)  &= \mc {W}( \ov \Sigma) \leq \liminf_{ j \rightarrow \infty} \mc {W} ( \Sigma_j) \ \ \ \text{and} \\
\label{eqn_genus_bound}  g ( \ov \Sigma)& \leq \liminf _{ j \rightarrow \infty } g (\Sigma_j).  
\end{align}
\end{lemma}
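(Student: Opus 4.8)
\emph{Proof proposal.} The plan is to invert the entire configuration at $x_0$ and compare the closed inverted surfaces with their local smooth limit. Since $x_0\notin \Sigma$, I would first choose a ball $B$ centered at $x_0$ with $B\cap \Sigma=\emptyset$; the local smooth convergence \eqref{eqn_local_convergence} forces $\Sigma_j\cap B=\emptyset$ for all large $j$, so that $x_0\notin \Sigma_j$ and the inversions $\ov{\Sigma_j}=I_{x_0}(\Sigma_j)$ are well-defined closed surfaces. As $I_{x_0}$ is a conformal diffeomorphism of $\R^3\cup\{\infty\}$ (smooth on a neighbourhood of $\Sigma$ and of each $\Sigma_j$) and the Willmore operator is conformally invariant, $\ov\Sigma$ and $\ov{\Sigma_j}$ are smooth Willmore surfaces. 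Moreover each $\Sigma_j$ is closed with no ends and misses $x_0$, so the inversion formula \eqref{inversion2} of Theorem \ref{thm_three} gives $\mc{W}(\ov{\Sigma_j})=\mc{W}(\Sigma_j)$.

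Next I would analyse the point $x_0$, which is the image under $I_{x_0}$ of the point at infinity. The surface $\Sigma$ is complete, non-compact and of finite total curvature, hence it has finitely many ends whose multiplicities sum to $\theta^2(\mu,\infty)$, and all of these ends are mapped to $x_0$. By the inversion formula \eqref{inversion3} the image has finite total curvature and finite area near $x_0$, so the point-removability results of Kuwert--Li \cite{Kuwert2010} and M\"uller--\v{S}ver\'ak \cite{Muller1995} show that $x_0$ is a finite-area branch point of multiplicity $\theta^2(\mu,\infty)$; thus $\ov\Sigma\cup\{x_0\}$ is a closed branched surface. Applying \eqref{inversion2} directly to the immersion $f:\Sigma\to\R^3$, whose ends contribute $\sum_i(k(a_i)+1)=\theta^2(\mu,\infty)$ and for which $f^{-1}(x_0)=\emptyset$, yields the energy identity $\mc{W}(\ov\Sigma)=\mc{W}(\Sigma)+4\pi\theta^2(\mu,\infty)$.

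For the inequality I would pass to the varifold limit of the $\ov{\Sigma_j}$. Because $I_{x_0}$ contracts the far field, all $\ov{\Sigma_j}$ lie in a fixed bounded region, and with the uniform bounds \eqref{eqn_willmore_bound}, \eqref{eqn_total_bound} the compactness theory of Kuwert--Sch\"atzle \cite{Kuwert2001} produces a subsequential varifold limit $V$. Away from $x_0$ the diffeomorphism $I_{x_0}$ transports the local smooth convergence $\Sigma_j\to\Sigma$ to $\ov{\Sigma_j}\to\ov\Sigma$, so $V$ agrees with $\ov\Sigma$ on $\R^3\setminus\{x_0\}$ and carries density $\theta^2(\mu,\infty)$ at $x_0$; in particular $V$ is supported on $\ov\Sigma\cup\{x_0\}$. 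Lower semicontinuity of the Willmore functional under varifold convergence then gives $\mc{W}(\ov\Sigma)=\mc{W}(V)\le \liminf_j \mc{W}(\ov{\Sigma_j})=\liminf_j\mc{W}(\Sigma_j)$, which is \eqref{eqn_energy_bound}.

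Finally, for the genus bound \eqref{eqn_genus_bound} I would exploit that $I_{x_0}$ is a diffeomorphism, so $g(\ov\Sigma\cup\{x_0\})$ equals the genus of the compactified $\Sigma$, and that the concentration region near $x_0$ corresponds to the finite-total-curvature ends of $\Sigma$, which are topologically a union of disks and hence carry no genus. Consequently the entire genus of $\ov\Sigma$ is realised on a fixed compact piece $\ov\Sigma\cap K\subset \R^3\setminus\{x_0\}$ that converges smoothly into $\ov{\Sigma_j}$, and the genus lower-semicontinuity of Kuwert--Sch\"atzle \cite{Kuwert2001} yields $g(\ov\Sigma)\le \liminf_j g(\Sigma_j)$. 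The main obstacle throughout is the behaviour in the neck near $x_0$: one must show that the varifold limit picks up exactly the density $\theta^2(\mu,\infty)$ and that neither Willmore energy nor genus escapes there. This is precisely where the finite-total-curvature structure of the ends and the lower-semicontinuity theorems are indispensable.
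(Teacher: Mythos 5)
Your proposal is correct in substance and follows the same skeleton as the paper's proof: invert at $x_0$ (legitimate since the local smooth convergence gives $\dist(x_0,\Sigma_j)\to \dist(x_0,\Sigma)>0$), obtain the energy identity by applying \eqref{inversion2} once to the closed surfaces $\Sigma_j$ (no ends, $f^{-1}(x_0)=\emptyset$, so $\mc W(\ov\Sigma_j)=\mc W(\Sigma_j)$) and once to the complete surface $\Sigma$, get the inequality by lower semicontinuity, and localize the genus away from $x_0$ using that the ends compactify to disks. Two steps deviate technically from the paper, and one deserves a warning. First, for \eqref{eqn_energy_bound} the paper does not need the varifold compactness machinery: since $\ov\Sigma_j\to\ov\Sigma$ smoothly on compact subsets of $\R^3\setminus\{x_0\}$, one exhausts $\ov\Sigma$ by $\ov\Sigma\setminus B_\rho(x_0)$ and lets $\rho\to 0$, with the monotonicity formula of \cite{Kuwert2004} controlling the density at infinity; your varifold route is valid but heavier than necessary. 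Second, and more importantly, for \eqref{eqn_genus_bound} your reduction (Huber gives finitely many ends, each conformally a punctured disk, so the genus of $\ov\Sigma\cup\{x_0\}$ lives on the compact piece $\ov\Sigma\setminus B_\rho(x_0)$, which for large $j$ is diffeomorphic to a subsurface of $\ov\Sigma_j$) is exactly the paper's, but your closing appeal to ``genus lower-semicontinuity of Kuwert--Sch\"atzle \cite{Kuwert2001}'' cites a result that is essentially the statement being proved: the present lemma is the paper's announced improvement of \cite[Lemma 5.1]{Kuwert2004}, and the genus bound is part of what it adds. The paper closes this step by hand: $\partial B_\rho(0)$ meets $\ov\Sigma_j$ in $k$ closed curves, so $\chi(\Sigma_j)\le \chi(\ov\Sigma_j\setminus B_\rho(0))+k$, while the corresponding $k$ curves in $\ov\Sigma\cup\{x_0\}$ bound disks by the local graph expansion \eqref{eqn_loc_expansion}, giving $\chi(\ov\Sigma\cup\{x_0\})=\chi(\ov\Sigma\setminus B_\rho(0))+k\ge \lim_j\chi(\Sigma_j)$. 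Your argument can be completed without the dubious citation by invoking the standard topological fact that a compact subsurface of a closed orientable surface has genus at most that of the ambient surface, which is proved by the same Euler characteristic count. Finally, a harmless slip: the $\ov\Sigma_j$ are \emph{not} Willmore surfaces, since the $\Sigma_j$ are not assumed Willmore (only the limit $\Sigma$ is); what you need, and all you actually use, is the conformal invariance identity $\mc W(\ov\Sigma_j)=\mc W(\Sigma_j)$.
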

\begin{proof} 
Without loss of generality, we assume that $x_0 = 0$ and we denote $I:=I_0$. Let $\ov \Sigma _ j = I ( \Sigma_j)$. Using the convergence property \eqref{eqn_local_convergence} we see that $ \dist ( 0, \Sigma_j) \rightarrow \dist ( 0, \Sigma) >0$ hence for $j$ sufficiently large $0 \not \in \ov \Sigma _j $. Furthermore, we have that 
\begin{align*}
\ov \Sigma _j \rightarrow\ov \Sigma \quad \text{ smoothly in compact sets of $ \R ^ 3 \backslash \{ 0\} $.} 
\end{align*}
Using the monotonicity formula (see e.g. Appendix A of \cite{Kuwert2004}), \eqref{eqn_willmore_bound} and \eqref{inversion2} we have that 
\begin{align*}
\mc {W}( \Sigma) + 4 \pi\theta^2 ( \mu , \infty) &= \mc {W}(\ov \Sigma) \leq \liminf _{ j\rightarrow \infty} \mc W ( \ov \Sigma _j) = \liminf _{ j \rightarrow \infty } \mc W (\Sigma_j).  
\end{align*} 
Additionally we also conclude that
\[
 \int_\Sigma |A|^2 d\mu \le C.
\]
Now as $ \ov \Sigma $ is smooth away from $0$ and $ \ov \Sigma _ j$ converges smoothly to $\ov \Sigma$ away from $0$ we conclude for $\rho$ sufficiently small and $j$ large (depending on $ \rho$) that $\partial B _ \rho (0)$ intersects $\ov \Sigma _j $ in a finite number of smooth closed curves, $C_{i,j},, i =1 ,\dots, k $ where $ \cup_{i = 1}^{k}C_{i,j}  = \ov \Sigma_ j \cap \partial B_\rho(0)$. We put 
\begin{align*}
\ov \Sigma _ \rho &= \ov \Sigma \backslash B_ \rho(0), \\
 \ov \Sigma _{ j , \rho , +} &= \ov \Sigma _j \backslash B _ \rho (0).
\end{align*}  
Considering appropriate triangulations we have that 
\begin{align*}
\chi ( \Sigma_j) = \chi ( \ov \Sigma _{ j ,\rho, +} ) + \chi ( \ov \Sigma_ j \cap B_ \rho(0)) \leq \chi ( \ov \Sigma _{ j, \rho, +}) + k, 
\end{align*}
as $ \chi ( \ov \Sigma _ j \cap B _ \rho(0) ) \leq k$. By smooth convergence away from zero we have that $ \ov \Sigma _{ j, \rho,+} \rightarrow \ov \Sigma_ \rho$ and in particular we have that $ \cup_{i=1}^{k}C_{i,j} =  \ov \Sigma_{j}\cap \partial B_\rho(0) \rightarrow \ov \Sigma\cap  \partial B_\rho(0)$ which shows that $\ov \Sigma\cap \partial B _\rho(0)$ consists of $k$ closed curves. As $ \int_{ \Sigma} |A|^{2} d \mu _{ \Sigma}< \infty$, we have that $ \Sigma $ is conformal to a compact Riemann surface $S$ with $l$ points $\{p_1,\ldots,p_l\}$ removed by a result of Huber \cite{Huber}. We let $\tilde{f}:S\backslash \{p_1,\ldots,p_l\} \to \Sigma$ be the conformal parametrisation. 

In particular we get that $ \ov \Sigma \cup \{x_0\}= I (\Sigma) \cup \{0\}$ is a closed Riemann surface. Note that as $\rho\rightarrow 0$ we see that $ B _ \rho (0) \rightarrow 0$ and hence the boundary curves $ f ^{-1} ( \ov {\Sigma} \cap \partial B _ \rho ( 0))\rightarrow \{ p _1 , \dots p_ l\}$ as $ \rho \rightarrow 0$, where $f:=I\circ \tilde{f}$. As the points have positive distance from each other we conclude that there are precisely $k$ points. Furthermore, by the local expansion \eqref{eqn_loc_expansion} (note that $\ov \Sigma$ is Willmore away from $0$), we see that locally about the origin, the surface consists of multivalued graphs. In particular, the $k$ closed curves in the set $f^{-1} ( \ov\Sigma \cap  \partial  B _ \rho (0 ))$ bound disks and hence this gives us 
\begin{align*}
\chi ( \ov \Sigma ) = \chi (\ov \Sigma _ \rho) + k = \lim_{ j \rightarrow \infty } \chi( \ov \Sigma _{ j,\rho,+} ) + k \geq \lim_{ j \rightarrow \infty} \chi( \Sigma_j), 
\end{align*}
and as a consequence, 
\begin{align*}
g( \ov \Sigma) \leq \liminf_{ j\rightarrow \infty } g ( \Sigma_j).
\end{align*}
\end{proof}

Now we are in a position to prove Theorem \ref{thm_main2}.  
 \begin{thm2}
Let $f_0: \mbb{S}^2 \rightarrow \R ^ 3 $ be a smooth immersion of a non-Willmore sphere with Willmore energy
\begin{align*} 
\mc{W} ( f_0) \leq 16 \pi.
\end{align*} 
If the maximal Willmore flow $f:\mbb{S}^2\times [0,T)\to \R^3$ with initial value $f_0$ does not converge to a round sphere then there exist sequences $ r _j, t _ j\nearrow T $ where
$ r_j \rightarrow \infty, r_j \rightarrow 0  $ or $ r_j \to 1 $ and a rescaled flow 
  \begin{align*}
 f _ j : \Sigma \times \left[ -\frac{ t_j}{ r_j^4} , \frac{ T - t_j} { r _j ^ 4 } \right) \rightarrow \R^3, f_j( p, \tau) = \frac{ 1}{ r_j} ( f ( p, t_j+ r_j^4 \tau) - x_j) 
\end{align*}
such that $f_j$ converges locally smoothly to either a catenoid, Enneper's minimal surface or a trinoid. In particular, if $\mc{W}(f_0) \leq 12\pi$ then either the maximal Willmore flow converges to a round sphere or $f_j$ converges locally smoothly to a catenoid.  
\end{thm2}
\begin{proof} 
As the initial immersion $f_0$ is not Willmore, we have that the Willmore energy must decrease immediately. Hence for $ t_0>0$, we have that $ \mc { W} (f (t_0)) < 16\pi $. Therefore we assume from now on that $  \mc { W} (f _0)\le 16\pi-\delta$ for some $\delta>0$. Moreover we assume that the maximal Willmore flow does not converge to a round sphere after a possible translation.
By the above discussion we then get a rescaled flow $f_j$ as claimed. More precisely, $f_j$ converges locally smoothly to a smooth, properly immersed Willmore surface $\hat{f}_0:\hat{\Sigma}_0\to \R^3$. 

Next we claim that $\hat{\Sigma}_0$ is complete and non-compact. This follows again from the above discussion except in the case $T=\infty$, $r_j\to 1$ in which the limit under translations could have a compact component. But then $\hat{\Sigma}_0$ would be diffeomorphic to $\mbb{S}^2$ and since $\mc{W}(\hat{f}_0)\le 16\pi-\delta$ this implies that $\hat{\Sigma}_0$ is a round sphere contradicting the fact that the flow does not converge to a round sphere.

This allows us to apply Lemma \ref{thm_convergence} and we conclude that there exists $x_0\notin \hat{\Sigma}_0$ such that $\overline{\Sigma} \cup \{x_0\}=I_{x_0}(\hat{\Sigma}_0) \cup \{x_0\}$ is a branched Willmore sphere with Willmore energy strictly less than $16\pi$. Combining Theorem \ref{thm_classify} and Remark \ref{energy} then shows that the divisor of $\tilde{f}=I_{x_0} \circ \hat{f}_0$ is equal to the conformal transform $\widehat{\tilde{f}}=mx_0$ with $m\in \{1,2,3\}$ and therefore $\hat{\Sigma}_0$ must be either a catenoid, Enneper`s minimal surface or a trinoid.
\end{proof}
Note that there exists smooth Willmore spheres with energy $ 16 \pi$. These were classified by Bryant \cite{Bryant1984} and correspond to the M\"obius transforms of complete minimal surfaces with four planar ends.   
\appendix
\numberwithin{equation}{section}
\section{The conformal Gauss map}
 
Here we review some results about the conformal Gauss map which are needed in section 3. All the results are based on the paper of  Bryant \cite{Bryant1984}, notes of Eschenburg \cite{Eschenburg} and a recent paper of Dall'Acqua \cite{Dallaqua}.

\subsection{Conformal Geometry of Spheres} 
For an immersion $f:\Sigma \to \R^3$ with normal $\nu:\Sigma \to \mbb{S}^2$ we define the central sphere $S_f(p)$ at $p\in \Sigma$ by
\begin{align*}
 S_f(p)= \begin{cases} \{ x\in \R^3:\,\ |x-m(p)|=r(p)\}\ \ & \text{if}\ \ H(p) \not= 0, \\
           \{ x\in \R^3:\,\ \langle x-f(p),\nu(p)\rangle =0\}\ \ & \text{if}\ \ H(p) = 0,
\end{cases}
\end{align*}
where $m(p)=f(p)+\frac1{H(p)} \nu(p)$ and $r(p)=\frac1{|H(p)|}$.

Next we consider the set $\mathbb{M}_0$ of unoriented spheres in $\R^3$ with centre $x_0$ and radius $r>0$ and we denote by $\phi:\R^4 \cup \{\infty \} \to \R^4 \cup \{\infty \}$ the inversion at the sphere $\partial B_{\sqrt{2}}((\underline 0,1))$, where $\underline 0=(0,0,0) \subset \R^3$. For each $S=\partial B_r(x_0))$ we define the map $F:\mathbb{M}_0 \to \R^4 \backslash \overline{B_1(0)}$, 
\[
 F(S)= \text{center of the sphere}\,\ \phi(\{p\in \R^4:\,\ |p-x_0|=r\}).
\]
A standard calculation shows that
\[
 F(\partial B_r(x_0)) =\frac1{|x_0|^2+1-r^2} (2x_0,|x_0|^2-1-r^2)
\]
and hence this map is not well-defined for $r^2=1+|x_0|^2$ since in this case the set $\phi(\{p\in \R^4:\,\ |p-x_0|=r\}$ is a plane.

Let us now consider $ \R ^ {5} = \R^ {4,1}$ with the Lorentzian product 
\begin{align*}
\langle X, Y\rangle = \sum_{ i=1} ^ 4 X_i Y_i - X_5 Y_5.
\end{align*} 
We also denote by $Q^4=\{q\in \R^5:\,\ \langle q,q \rangle=1\}$ the quadric in $\R^5$.

We calculate
\[
\langle ( F(\partial B_r(x_0)),1), (F(\partial B_r(x_0)),1) \rangle =\frac{4r^2}{(|x_0|^2-r^2+1)^2}.
\]
This shows that we can normalize and extend the map $F$ in order to get a new map $P:\mathbb{M}\to Q^4$,
\[
 P(S)=\frac1{r} \Big(x_0,\frac12(|x_0|^2-r^2-1),\frac12 (|x_0|^2-r^2+1)\Big),
\]
where $S=\partial B_r(x_0)$ and $\mathbb{M}$ is the set of oriented spheres with centre $x_0\subset \R^3$ and radius $r\in \R \backslash \{0\}$. The sphere is oriented by the inner (resp. outer) normal iff $r>0$ (resp. $r<0$).

Note that 
\[
 P_5(S)-P_4(S) =\frac1{r}\ \ \begin{cases} &>0 \ \ \ \text{for} \ \ r>0\\
                              &<0\ \ \ \text{for} \ \ r<0.
                             \end{cases}
\]
Hence spheres in $\R^3$ are represented by points in $Q^4$. We can also represent points $x\in \R^3$ by
\[
 X=\lim_{r\to 0} r P(\partial B_r(x))=\Big(x,\frac12 (|x|^2-1),\frac12(|x|^2+1)\Big).
\]
Note that the map $x\to X$ is an isometry onto 
\begin{align*}
L \cap \{ X_5 - X_4 = 1 \}.
\end{align*}
where $ L = \{ X \mid \langle X, X\rangle =0\} $ is the light cone in $ \R^5$.
 
An equivalent expression for $P$ can be gives as follows. We let $S\subset \mathbb{M}$ be a sphere (oriented by $\nu$) with mean curvature $H(x)$ with respect to $\nu(x)$ for $x\in S$. Then we have $S=\partial B_r(x_0)$ with $x_0=x+\frac1{H}\nu$, $r=\frac1{H}$ and we calculate
\[
P(S)=\Big(Hx+\nu,\frac{H}{2}(|x|^2-1)+\langle x,\nu\rangle, \frac{H}2(|x|^2+1)+\langle x,\nu \rangle \Big). 
\]
For $H\to 0$ we obtain a plane $S$ through the point $x$ with normal $\nu$ and we can extend the map $P$ to include this case by
\[
 P(S)=(\nu ,\langle x,\nu \rangle, \langle x,\nu \rangle). 
\]
With a slight abuse of notation we denote the induced map $\mathbb{M} \to \R P^4$ also by 
\[
 P(S)=[x_0,\frac12(|x_0|^2-r^2-1),\frac12 (|x_0|^2-r^2+1)].
\]
This map can again be extended to $\mathbb{M} \cup \R^3$ by using that for $S$ being a plane through $x$ with normal $\nu$ we have
\[
P(S):=[x,\frac12 (|x|^2-1),\frac12(|x|^2+1)].
\]

\subsection{Sphere Congruence} 
A sphere congruence is a smooth mapping $ S : \Sigma \rightarrow \mathbb{M}$ where $\Sigma$ is some $2$-dimensional manifold. Using the conformal geometry of spheres developed above, such a mapping can be represented by a smooth mapping $ Y : \Sigma\rightarrow Q^4, $ where
\begin{align*}
Y(m) = P(S_{r,m}).
\end{align*}  
where $ S_{r,m}$ is a sphere of radius $r$ which touches the point $m\in \Sigma$.  A smooth map $f:\Sigma\rightarrow \R^3$ is called an enveloping surface of the sphere congruence $S:\Sigma \to \mathbb{M}$ if 
\begin{enumerate}
\item $f(m) \in S(m) $ for all $m\in \Sigma$ and 
\item $d f _m ( T_m \Sigma) \subset T_{f(m)} S(m)  \quad \forall m \in \Sigma$. 
\end{enumerate}
Passing to the mapping $X: \Sigma\rightarrow L$, $X(m)=\Big(f(m),\frac12(|f(m)|^2-1),\frac12(|f(m)|^2+1)\Big)$, the conditions  $(1) $ and $(2)$ translate to 
\begin{align*}
\langle X(m), Y(m) \rangle =0 \quad \la d X(m), Y(m) \rangle = 0 \quad \forall m \in \Sigma, v \in T_m \Sigma. 
\end{align*}
So we may characterize the enveloping surface $X$ of a sphere congruence $Y$ by 
\begin{align*}
\langle X, Y \rangle =0, \quad \langle X, d Y \rangle = 0
\end{align*}
where we used $ d \langle X, Y \rangle = 0$. Hence $X$ is a null vector in $ T_{ Y(m) } Q ^ 4 $ which is normal to $Y$. Note by $\langle X, d Y\rangle = 0 $ we mean the vector in the same direction as $X$ at the point $Y$, i.e. $X_{Y} = Y + X, \langle X_Y, d Y \rangle = 0 $. This characterises a smooth enveloping surface, however, it requires no regularity of the sphere congruence $X$ itself. Furthermore this shows that for a space-like surface $\Sigma$ in $Q^4$ the normal bundle has signature $ ( +,-)$ and that each null vector field corresponds to a sphere congruence.    

\subsection{Conformal Gauss map} 

For an immersion $ f : \Sigma \rightarrow \R^3$ with unit normal $\nu: \Sigma\rightarrow \mbb{S}^2$ and mean curvature $H$ we define the conformal Gauss map by $Y:\Sigma \to Q^4$, $Y(m)=P(S_f(m))$. We have
\[
 Y=H X+N,
\]
where $X=\Big(f,\frac12(|f|^2-1),\frac12 (|f|^2+1)\Big)$ and $N=(\nu,\langle f,\nu \rangle,\langle f,\nu \rangle)$.

The map $Y$ satisfies
\begin{align*}
\langle \nabla _ i Y , \nabla_j Y\rangle = ( H^2 - K) \langle \nabla _i f , \nabla _j f \rangle 
\end{align*} 
and hence it is a conformal map (away from umbilic points) with respect to the conformal structure on $\Sigma$ induced by the immersion $f:\Sigma \to \R^3$ and the Willmore functional is the area of $Y$
\begin{align*}
\mc {W} (f) = \Area( Y).
\end{align*}
In particular, $f$ is a Willmore immersion if $Y$ is a minimal surface (in $Q$). The converse is true, however there is some subtlety. Namely $Y$ is an immersion into a Lorentzian manifold so some deformations maybe timelike, but deformations that come from the deforming Willmore surface must be spacelike. 

Let $\Sigma$ be a surface, possibly non-compact and let $ f: \Sigma\rightarrow \R^3$ be an immersion. Then $f$ is a Willmore immersion if and only if $ \mc { W } ( f, \Sigma') $ is stationary for any relatively compact open subset $ \Sigma' \subset \Sigma$. i.e. any smooth variation $ f ^t $ of $f$ with $f^t = f $ outside of $\Sigma'$. In this case we have that 
\begin{align*}
\delta \mc{W}(f, \Sigma')= \frac{ d}{dt} \bigg|_{ t=0}  \mc { W}(f^t, \Sigma') = 0.  
\end{align*}   

The following are equivalent
\begin{enumerate}
\item $f$ is a Willmore immersion 
\item $Y$ is a conformal harmonic map .
\end{enumerate}

Next we recall the standard equations for an immersion using complex co-ordinates $z$ for the conformal structure defined by the induced metric $g$ of the immersion $f:\Sigma \to \R^3$ with unit normal $\nu:\Sigma \to \mbb{S}^2$. We have the local expression $g=e^\lambda |dz|^2$. Moreover the second fundamental form $A=-\langle d\nu, df \rangle$ can be written as
\[
 A =   \Re \{ \varphi d z ^ 2 + H e ^  \lambda dz d \ov z  \},
\]
where $\varphi=2 (f_{zz},\nu)$ is the Hopf differential.

Now let $ \Lambda $ be any symmetric $m$ -form on $\Sigma$. 
\begin{align*}
\Lambda = \sum_{ j+k = m } \Lambda_{ jk } d z ^ j d \ov z ^ k.
\end{align*}
This decomposition is invariant under change of holomorphic charts and we have that 
\begin{align*}
\Lambda ^{ j,k} = \Lambda _{jk} d z ^ j d \ov z ^ k
\end{align*}
is the $ (j,k)$ part of the $\Lambda$.

Associated with the conformal Gauss map is a holomorphic quartic differential.
\begin{proposition}
Let $ Y : \Sigma \rightarrow Q ^{4} \subset \R^5 $ be a conformal minimal immersion, then 
\begin{align*}
\la \alpha , \alpha \ra ^{ (4,0)} = \la Y_{zz}, Y_{zz} \ra d z ^4, 
\end{align*} 
where $\alpha:T\Sigma \otimes T\Sigma \to NY$ is the second fundamental form with components $\alpha_{ij}=(Y_{ij})^\perp$.

Moreover we have that $ \la \alpha, \alpha \ra ^{ (4,0)} $ is holomorphic.
\end{proposition}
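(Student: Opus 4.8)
The plan is to work entirely in a local conformal coordinate $z$ for the induced metric and to extract everything from three structural facts. First, $Y$ takes values in the quadric $Q^4$, so $\la Y, Y\ra \equiv 1$. Second, $Y$ is conformal, so $\la Y_z, Y_z\ra = 0$ while $\la Y_z, Y_{\bar z}\ra = \frac12 e^{2\omega} > 0$ for a conformal factor $\omega$. Third, $Y$ is minimal, equivalently a harmonic map into $Q^4$, so its tension field vanishes and $Y_{z\bar z}$ is normal to $Q^4$, i.e. $Y_{z\bar z} = \mu Y$ for some function $\mu$. From the first two facts I would read off the differential identities that drive the whole proof: differentiating $\la Y, Y\ra \equiv 1$ gives $\la Y_z, Y\ra = 0$, and differentiating once more against $\la Y_z, Y_z\ra = 0$ yields $\la Y_{zz}, Y\ra = 0$; differentiating $\la Y_z, Y_z\ra = 0$ in $z$ gives $\la Y_{zz}, Y_z\ra = 0$. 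Thus $Y_{zz}$ is isotropic against both $Y$ and $Y_z$, and the only surviving tangential pairing $\la Y_{zz}, Y_{\bar z}\ra = e^{2\omega}\omega_z$ comes from $\partial_z \la Y_z, Y_{\bar z}\ra$, where harmonicity enters through $\la Y_z, Y_{z\bar z}\ra = \mu\la Y_z, Y\ra = 0$.

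For the first assertion I would compute the $(2,0)$-part of the second fundamental form. Since the surface normal bundle $NY$ is the orthogonal complement of $\text{span}(Y_z, Y_{\bar z})$ inside $T_Y Q^4 = Y^\perp$, and $Y_{zz}$ already has no $Y$-component, projecting $Y_{zz}$ onto the surface tangent space in the basis $\{Y_z, Y_{\bar z}\}$ gives $(Y_{zz})^\top = 2\omega_z Y_z$, the coefficient being forced by the pairings above. Hence $\alpha_{zz} = (Y_{zz})^\perp = Y_{zz} - 2\omega_z Y_z$, and then $\la \alpha_{zz}, \alpha_{zz}\ra = \la Y_{zz}, Y_{zz}\ra - 4\omega_z \la Y_{zz}, Y_z\ra + 4\omega_z^2 \la Y_z, Y_z\ra = \la Y_{zz}, Y_{zz}\ra$, the two correction terms vanishing by isotropy. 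Since $\la \alpha, \alpha\ra^{(4,0)}$ is by definition $\la \alpha_{zz}, \alpha_{zz}\ra\, dz^4$, this yields the claimed identity.

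For holomorphicity it suffices to show $\partial_{\bar z}\la Y_{zz}, Y_{zz}\ra = 0$, so that the coefficient is holomorphic and $\la Y_{zz}, Y_{zz}\ra\, dz^4$ is a well-defined quartic differential. I would write $\partial_{\bar z}\la Y_{zz}, Y_{zz}\ra = 2\la Y_{zz\bar z}, Y_{zz}\ra$ and use $Y_{zz\bar z} = \partial_z(Y_{z\bar z}) = \partial_z(\mu Y) = \mu_z Y + \mu Y_z \in \text{span}(Y, Y_z)$. Both $Y$ and $Y_z$ pair to zero against $Y_{zz}$ by the isotropy identities already established, so $\la Y_{zz\bar z}, Y_{zz}\ra = 0$ and holomorphicity follows at once. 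Note that only the qualitative statement $Y_{z\bar z}\parallel Y$ is needed here, not the precise value of $\mu$.

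I expect the main obstacle to be organizational rather than computational: one must keep straight the two distinct orthogonal complements, namely the normal $Y$ to $Q^4$ inside the Lorentzian $\R^5$ versus the normal bundle $NY$ of the surface inside $Q^4$, and one must handle the isotropy $\la Y_z, Y_z\ra = 0$ correctly when inverting the off-diagonal Gram matrix of $\{Y_z, Y_{\bar z}\}$. Once the identities $\la Y_{zz}, Y\ra = \la Y_{zz}, Y_z\ra = 0$ are in hand, both assertions collapse to short inner-product computations, and a closing remark that $\la Y_{zz}, Y_{zz}\ra\, dz^4$ transforms as a $(4,0)$-form under holomorphic changes of $z$ completes the argument.
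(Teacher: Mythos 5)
Your proof is correct. Note that the paper never writes out a proof of this proposition---the appendix defers to Bryant and Eschenburg---so the natural comparison is with the machinery the paper develops immediately after the statement: a null frame $\{N_1,N_2\}$ of $NY$ with $\la N_1,N_2\ra=1$, $\la N_i,N_i\ra=0$, the Cauchy--Riemann-type equations $\la Y_{zz},N_1\ra_{\ov z}=\la N_{1\ov z},N_2\ra\la Y_{zz},N_1\ra$ and $\la Y_{zz},N_2\ra_{\ov z}=-\la N_{1\ov z},N_2\ra\la Y_{zz},N_2\ra$, and the factorization $\la Y_{zz},Y_{zz}\ra=\la Y_{zz},N_1\ra\la Y_{zz},N_2\ra$, from which holomorphicity follows because the $\ov z$-derivatives of the two factors carry opposite coefficients and cancel. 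You bypass the frame entirely: everything is extracted from $\la Y,Y\ra\equiv 1$, conformality, and the harmonic-map equation $Y_{z\ov z}\parallel Y$, which give the isotropy relations $\la Y_{zz},Y\ra=\la Y_{zz},Y_z\ra=0$; then both the identity $\la\alpha_{zz},\alpha_{zz}\ra=\la Y_{zz},Y_{zz}\ra$ (the tangential correction $2\omega_z Y_z$ pairs to zero with $Y_{zz}$ and with itself) and holomorphicity ($Y_{zz\ov z}=\mu_z Y+\mu Y_z$ pairs to zero with $Y_{zz}$) are one-line computations, and the same relations give the invariance of $\la Y_{zz},Y_{zz}\ra\,dz^4$ under holomorphic coordinate changes. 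Your route is more elementary and self-contained; what the paper's frame formulation buys in exchange is exactly the extra structure it needs later---that each factor $\la Y_{zz},N_i\ra$ has isolated zeros or vanishes identically, and the constancy of the null line $[N_2]$ when $\la\alpha,\alpha\ra^{(4,0)}\equiv 0$, see \eqref{prop_hol}---which is the engine of Theorem \ref{thm_mobius}. Two cosmetic remarks: harmonicity is not actually needed for $\la Y_z,Y_{z\ov z}\ra=0$, since this already follows from $\partial_{\ov z}\la Y_z,Y_z\ra=0$; and you should state explicitly that minimality of the conformal immersion $Y$ is equivalent to harmonicity of $Y$ as a map into $Q^4$, which is the standard fact licensing $Y_{z\ov z}=\mu Y$.
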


Using the local complex coordinates from above one can derive an expression for the quartic form.
\begin{lemma}
In local complex co-ordinates we have the following expression for the quartic form $\langle \alpha, \alpha \rangle ^{ (4 , 0) },$ 
\begin{align}\label{eqn_main}
\la Y_{ zz} , Y_{ zz} \ra =  \varphi H_{ zz} - H_ z ( \varphi e ^{ - \lambda }  )_z e ^ \lambda + \frac{ \varphi ^ 2 H^2 } { 4}.
\end{align}
\end{lemma}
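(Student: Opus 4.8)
The plan is to compute $\la Y_{zz},Y_{zz}\ra$ directly by differentiating the lift $Y=HX+N$ twice in the conformal coordinate $z$ and exploiting the null geometry of the light cone. I would first record the algebraic relations among $X=\big(f,\tfrac12(|f|^2-1),\tfrac12(|f|^2+1)\big)$ and $N=(\nu,\la f,\nu\ra,\la f,\nu\ra)$, namely $\la X,X\ra=0$, $\la N,N\ra=1$ and $\la X,N\ra=0$, which in particular re-prove $\la Y,Y\ra=1$. It is convenient to also introduce the fixed null vector $e=(0,0,0,1,1)$, for which $\la e,e\ra=0$, $\la X,e\ra=-1$ and $\la N,e\ra=0$.

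Next I would set up the standard structure equations of $f$ in complex conformal coordinates (with $g=e^\lambda|dz|^2$, $\lambda=2u$): conformality gives $\la f_z,f_z\ra=0$, $\la f_z,f_{\bar z}\ra=\tfrac12 e^\lambda$, $\la f_z,\nu\ra=0$, while the Gauss and Weingarten relations read
\[
 f_{zz}=\lambda_z f_z+\tfrac{\varphi}{2}\nu,\qquad f_{z\bar z}=\tfrac12 He^\lambda \nu,\qquad \nu_z=-Hf_z-\varphi e^{-\lambda} f_{\bar z}.
\]
Differentiating the components of $X$ and $N$ and using $\la f_z,\nu\ra=0$, the Weingarten relation collapses $N_z$ to $N_z=-HX_z-\varphi e^{-\lambda}X_{\bar z}$, so that the $HX_z$ terms cancel and
\[
 Y_z=H_z X-\varphi e^{-\lambda}X_{\bar z}.
\]
As a consistency check one sees immediately that $\la Y_z,Y_z\ra=0$, which recovers the conformality of $Y$.

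I would then differentiate once more to obtain
\[
 Y_{zz}=H_{zz}X+H_z X_z-(\varphi e^{-\lambda})_z X_{\bar z}-\varphi e^{-\lambda}X_{z\bar z},
\]
and reduce $\la Y_{zz},Y_{zz}\ra$ to a short inner-product table for $\{X,X_z,X_{\bar z},X_{z\bar z}\}$. Most entries vanish by nullity and conformality (e.g. $\la X,X\ra=\la X_z,X_z\ra=\la X_{\bar z},X_{\bar z}\ra=\la X,X_z\ra=\la X_z,X_{z\bar z}\ra=0$), and the only surviving terms are $\la X,X_{z\bar z}\ra=-\tfrac12 e^\lambda$, $\la X_z,X_{\bar z}\ra=\tfrac12 e^\lambda$ and $\la X_{z\bar z},X_{z\bar z}\ra=\tfrac14 H^2e^{2\lambda}$; the last two follow from $X_{z\bar z}=\tfrac12 He^\lambda N+\tfrac12 e^\lambda e$. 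Substituting these three values yields exactly the three claimed terms $\varphi H_{zz}$, $-H_z(\varphi e^{-\lambda})_z e^\lambda$ and $\tfrac{\varphi^2 H^2}{4}$, completing the proof.

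The computation is essentially routine once the structure equations are in place; the one point that requires care is the Weingarten equation for $\nu_z$ together with the identity $N_z=-HX_z-\varphi e^{-\lambda}X_{\bar z}$, since it is precisely this cancellation that produces the clean expression for $Y_z$ and makes the final inner-product table so sparse. Keeping the normalizations of $\varphi=2\la f_{zz},\nu\ra$ and of $H$ consistent with the paper's conventions throughout is the only other place where sign or factor errors could creep in.
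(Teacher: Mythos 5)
Your proof is correct, and it follows exactly the route the paper intends: the paper states this lemma without proof (deferring to Bryant and Eschenburg with the remark that one can derive it ``using the local complex coordinates from above''), and your direct computation --- differentiating $Y=HX+N$, using the Weingarten relation to collapse $Y_z$ to $H_zX-\varphi e^{-\lambda}X_{\bar z}$, and then evaluating the sparse inner-product table for $\{X,X_z,X_{\bar z},X_{z\bar z}\}$ --- is precisely that standard derivation, with all signs and normalizations consistent with the paper's conventions ($\varphi=2\la f_{zz},\nu\ra$, $\lambda=2u$). Nothing is missing; the identity $X_{z\bar z}=\tfrac12 He^{\lambda}N+\tfrac12 e^{\lambda}e$ and the vanishing of the remaining entries give the three terms of \eqref{eqn_main} exactly as claimed.
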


Since $NY$ is a 2 dimensional bundle we can choose a frame $ \{N_1, N_2 \} $ so that 
\begin{align*}
\la N_1, N_2 \ra =1, \la N_1, N_1 \ra = \la N_2, N_2 \ra =0.
\end{align*}
A standard calculation shows that 
\begin{align*}
 \la Y_{ zz}, N_1 \ra _{ \ov z } &= \la N_{ 1 \ov z }, N_2 \ra \la Y_{zz}, N_1 \ra ,\\
  \la Y_{ zz}, N_2 \ra _{ \ov z } &= \la N_{ 2 \ov z } , N_1 \ra \la Y_{ zz}, N_2 \ra  = - \la N_{ 1 \ov z } , N_2 \ra \la Y_{zz}, N_2 \ra \ \ \text{and}\\
\la Y_{zz} ,Y_{zz} \ra &= \la Y_{zz},N_1 \ra \la Y_{zz}, N_2 \ra. 
\end{align*}
As the first two equations are Cauchy-Riemann type equations, this implies that the functions $ \la Y_{ zz}, N_1 \ra $ have isolated zeroes or vanish everywhere.

A similar result is true for $ \la Y_{ zz}, Y_{zz} \ra $. More precisely we have that if $ \la Y _{zz} , Y _{ zz} \ra = 0$ then  
\begin{align}
N_{ j z } = f N_{ j } \label{prop_hol}
\end{align} 
for some function $f$ and either $ j=1$ or $j=2$.

This shows that the mapping $ N_ i \mapsto [ N_i ] \in \mbb{CP}^4 $ is anti-holomorphic. Note that in the case where $NY$ has a metric of signature $ ( +,-)$ we can choose $ N_1, N_2 $ to be real vectors. Hence if $ [ N_i]$ is holomorphic, then from the Cauchy-Riemann equations we get that $ [N_i ]$ is a constant.

\section{Weierstrass-Enneper representation of complete, branched minimal surfaces}

The results of this appendix are straightforward consequences of the corresponding statements in \cite{Bryant1984}.

By the Weierstrass-Enneper representation, it is well known that the classification of the branched complete minimal surfaces of finite total curvature may be reduced to an algebraic problem. In particular, we know that (as before $D$ is the divisor)
\begin{align*}
\partial ( I _ {\hat f} \circ f ) = \left ( 
\begin{array}{c} 
\phi_1 \\
\phi_2 \\
\phi_3
\end{array} 
\right)
\end{align*}  
where $ \phi_i$ are meromorphic one forms on $\Sigma$. Our geometric data translates into homomorphic data as follows
 \begin{enumerate} 
 \item $I _ { \hat f} \circ f  $ is an immersion $\iff$ the $ \phi_ i$ have no common zeroes. 
 
 \item $ I _ { \hat f} \circ f $ has a branch point of order $k$ $ \iff$ the $\phi_i$ have a common zero of order $k$ $\iff$ $ \phi$ are holomorphic sections of the bundle $K _{\Sigma} \otimes [ - D']$ where $ K_{\Sigma}$ is the canonical bundle of $ \Sigma$ with the given complex structure and $D'=\sum_{p\in D}(m(p)-1)p$.   
\item $ I _ { \hat f} \circ f $ is conformal $ \iff$  $\phi_1 ^ 2 + \phi_2 ^ 2 + \phi_ 3^ 2 = 0 $. 
\item The ends of $ I _ { \hat f} \circ f $ are embedded $\iff$ the $ \phi_i$ have poles of at worst second order on $D$ $\iff$ $\phi_i $ are holomorphic sections of $K_\Sigma \otimes [2D]$ where $ K_\Sigma$ is the canonical bundle of $ \Sigma$ with the given complex structure  
\item Then ends of  $ I _ { \hat f} \circ f $ have branch order $k(a)$ at $a \in D$ $\iff$ the $\phi_i$ have poles of at worst order $k(a)+1$ at $a \in D$ $\iff$ the $ \phi_i$ are holomorphic sections of $ K_{M} \otimes[ D_1]$ where $ D_ 1 = \sum_{a\in D} (k(a)+1) a$.    
\item The ends of $ I _ { \hat f} \circ f $ are planar $ \iff$ the $ \phi$ are differentials of the second kind that is $ \Res_{m } \phi_i = 0 $ for all ends $a_ i $ and $ m \in D$
\item $ I _ { \hat f} \circ f $ is single valued on $\Sigma^*$ $\iff$ for all $\gamma   \in H^0 ( \Sigma, \mbb Z ), \Re ( \Per_{\gamma } \phi_i ) = 0$. 
 \end{enumerate}


\begin{thebibliography}{10}

\bibitem{astala}
K.~Astala, T.~Iwaniec and G.~Martin.
\newblock {\em Elliptic partial differential equations and quasiconformal mappings in the plane.}
\newblock Princeton Mathematical Series, Volume 48, Princeton University Press, Princeton, NJ, 2009.

\bibitem{Bernard2011a}
Y.~Bernard and T.~Rivi{\`e}re. 
\newblock Asymptotic analysis of branched {W}illmore
  surfaces.
 \newblock  Preprint, 2011.

\bibitem{Bernard2011}
\bysame. 
\newblock Energy quantization for {W}illmore
  surfaces and applications.
 \newblock  Preprint, 2011.
 

 
\bibitem{Blatt2009}
S.~Blatt. 
\newblock A singular example for the {W}illmore flow.
\newblock {\em Analysis (Munich)}, 29: 407--430, 2009.

\bibitem{Bryant1984}
R.~L. Bryant. 
\newblock A duality theorem for {W}illmore surfaces. 
\newblock {\em J. Differential Geom.}, 20:23--53, 1984.

\bibitem{caffarelli}
L.~Caffarelli and A.~Friedman. 
\newblock Partial regularity of the zero-set of solutions of linear and superlinear elliptic equations. 
\newblock {\em J. Differential Equations}, 60:420--433, 1985.

\bibitem{chill08}
R.~Chill, E.~Fasangova and R.~Sch\"atzle.
\newblock Willmore blow-ups are never compact.
\newblock {\em Duke Math. J.}, 147:345--376, 2009.

\bibitem{Dallaqua}
A. Dall'Acqua.
\newblock Uniqueness for the homogeneous Dirichlet Willmore boundary value problem.
\newblock Preprint, 2011.

\bibitem{Eschenburg}
J.~Eschenburg. 
\newblock {W}illmore surfaces and {M}oebius geometry.
\newblock Preprint, available at \url{www.math.uni-augsburg.de/~eschenbu/willmore.pdf} .

  
\bibitem{Gunning1966}
R.~C. Gunning. 
\newblock {\em Lectures on {R}iemann surfaces}, 
\newblock Princeton University Press, Princeton, N.J., 1966. 

\bibitem{Huber}
A.~Huber. 
\newblock On subharmonic functions and differential geometry in the large. 
 \newblock {\em Comment. Math. Helv.}, 32:181--206, 1957.

\bibitem{Kuwert2010}
E.~Kuwert and Y.~Li. 
\newblock $W^{2,2}$-conformal immersions of a closed Riemann surface into $\R^n$. 
\newblock Preprint, 2010.

\bibitem{Kuwert2001}
E. Kuwert and R. Sch{\"a}tzle. 
\newblock The {W}illmore flow with small
  initial energy. 
  \newblock {\em J. Differential Geom.}, 57:409--441, 2001.
  
\bibitem{Kuwert2004}
\bysame. 
\newblock Removability of point singularities of {W}illmore surfaces.
\newblock {\em  Ann. of Math.}, 160:315--357, 2004.

\bibitem{kuwert07}
\bysame.
\newblock Branch points of Willmore surfaces.
\newblock {\em Duke Math. J.}, 138:179--201, 2007.

\bibitem{Kuwert2011}
\bysame.
\newblock The Willmore functional.
\newblock Preprint, 2011.

\bibitem{L'opez1992}
F.~J. L{\'o}pez. 
\newblock The classification of complete minimal surfaces
  with total curvature greater than {$-12\pi$}. 
  \newblock {\em Trans. Amer. Math. Soc.}, 334:49--74,  1992.

\bibitem{Mayer2002}
U.~F. Mayer and G. Simonett. 
\newblock A numerical scheme for axisymmetric
  solutions of curvature-driven free boundary problems, with applications to
  the {W}illmore flow.
  \newblock {\em Interfaces Free Bound.}, 4:89--109, 2002.
  

\bibitem{Muller1995}
S.~M{\"u}ller and V.~{\v{S}}ver{\'a}k. 
\newblock On surfaces of finite total
  curvature. 
  \newblock {\em J. Differential Geom.}, 42:229--258, 1995.
  

\bibitem{Nguyen2011}
H. T. Nguyen. 
\newblock Geometric rigidity for analytic estimates of M{\"u}ller-{\v{S}}ver{\'a}k. 
\newblock To appear in {\em Math. Z.}

\bibitem{Osserman1964}
R. Osserman. 
\newblock Global properties of minimal surfaces in {$E^3$} and
  {$E^n$}. 
  \newblock {\em Ann. of Math.}, 80:340--364, 1964.
  
\bibitem{Rivi`ere2008}
T.~Rivi{\`e}re. 
\newblock Analysis aspects of Willmore surfaces. 
\newblock {\em Invent. Math.}, 174:1--45, 2008.

\bibitem{Rivi`ere2010}
\bysame. 
\newblock Variational principles for immersed surfaces with $L^2$-bounded second fundamental form. 
\newblock Preprint, 2010.


\bibitem{Rosenberg1988}
H.~Rosenberg and E.~Toubiana.
\newblock Complete minimal surfaces and minimal herissons.
\newblock {\em J. Differential Geom.}, 28:115-132, 1988.


\bibitem{White1987}
B. White. 
\newblock Complete surfaces of finite total curvature. 
\newblock {\em J. Differential Geom.}, 26:315--326, 1987.

\end{thebibliography}
\end{document}